\begin{document}

\title{
  Ordered Line Integral Methods for Solving the Eikonal Equation
  \footnote{This work was partially supported by NSF Career Grant
    DMS1554907 and MTECH grant No.\ 6205.}
}

\author{Samuel F. Potter \and Maria K. Cameron}

\institute{
  Samuel F. Potter
  \at
  Department of Computer Science \\
  University of Maryland \\
  \email{sfp@umiacs.umd.edu}
  \and
  Maria K. Cameron
  \at
  Department of Mathematics \\
  University of Maryland \\
  \email{cameron@math.umd.edu}
}

\date{Received: \today}

\maketitle

\begin{abstract}
  We present a family of fast and accurate Dijkstra-like solvers for
  the eikonal equation and factored eikonal equation which compute
  solutions on a regular grid by solving local variational
  minimization problems. Our methods converge linearly but compute
  significantly more accurate solutions than competing first order
  methods. In 3D, we present two different families
  of algorithms which significantly reduce the number of FLOPs needed
  to obtain an accurate solution to the eikonal equation. One method
  employs a fast search using local characteristic directions to prune
  unnecessary updates, and the other uses the theory of constrained
  optimization to achieve the same end. The proposed solvers are more
  efficient than the standard fast marching method in terms of the
  relationship between error and CPU time. We also modify our method
  for use with the additively factored eikonal equation, which can be
  solved locally around point sources to maintain linear
  convergence. We conduct extensive numerical simulations and provide
  theoretical justification for our approach. A library that
  implements the proposed solvers is available on GitHub.
  \keywords{ordered line integral method, eikonal equation, factored
    eikonal equation, simplified midpoint rule, semi-Lagrangian
    method, fast marching method} \subclass{65N99, 65Y20, 49M99}
\end{abstract}

\section{Introduction}\label{sec:introduction}

We develop fast, memory efficient, and accurate solvers for the
eikonal equation, a nonlinear hyperbolic PDE encountered in
high-frequency wave propagation~\cite{engquist2003computational} and
the modeling of a wide variety of problems in computational and
applied science~\cite{sethian1999level}, such as photorealistic
rendering~\cite{ihrke2007eikonal}, constructing signed distance
functions in the level set method~\cite{osher2006level}, solving the
shape from shading
problem~\cite{kimmel2001optimal,prados2006shape,durou2008numerical},
traveltime computations in numerical modeling of seismic wave
propagation~\cite{sethian19993,popovici20023,kim20023,van1991upwind,vidale1990finite},
and others. We are motivated primarily by problems in high-frequency
acoustics~\cite{prislan2016ray}, which are key to enabling a higher
degree of verisimilitude in virtual reality simulations
(see~\cite{raghuvanshi2014parametric,raghuvanshi2018parametric} for a
cutting-edge time-domain approach which is useful up to moderate
frequencies). Current approaches to acoustics simulations rely on
methods whose complexity depends on the highest frequency of the sound
being simulated. For moderately high-frequency wave propagation
problems, the eikonal equation comes about as the first term in an
asymptotic WKB expansion of the Helmholtz equation and corresponds to
the first arrival time of rays propagating under geometric optics,
although approaches for computing multiple arrivals
exist~\cite{fomel2002fast}.

In this work, we develop direct solvers for the eikonal equation which
are fast and accurate, particularly in 3D. We develop a family of
algorithms which approach the problem of efficiently computing updates
in 3D in different ways. This family of algorithms is analyzed and
extensive numerical studies are carried out. Our algorithms are
semi-Lagrangian, using information about local characteristics to
reduce the work necessary to get an accurate result. They are
competitive with existing direct solvers for the eikonal equation and
generalize to higher dimensions and related equations, such as the
static Hamilton-Jacobi equation. In fact, this research was done in
tandem with research on the ordered line integral methods for the
quasipotential of nongradient stochastic differential equations
(SDEs)~\cite{dahiya2017ordered,dahiya2018ordered,yang2019computing}.
Due to the relative simplicity of the eikonal equation, the algorithms
presented here are more amenable to analysis, allowing us to obtain
theoretical results that justify our experimental findings.

\subsection{Results}

\begin{figure}
  \centering
  \includegraphics[width=\linewidth]{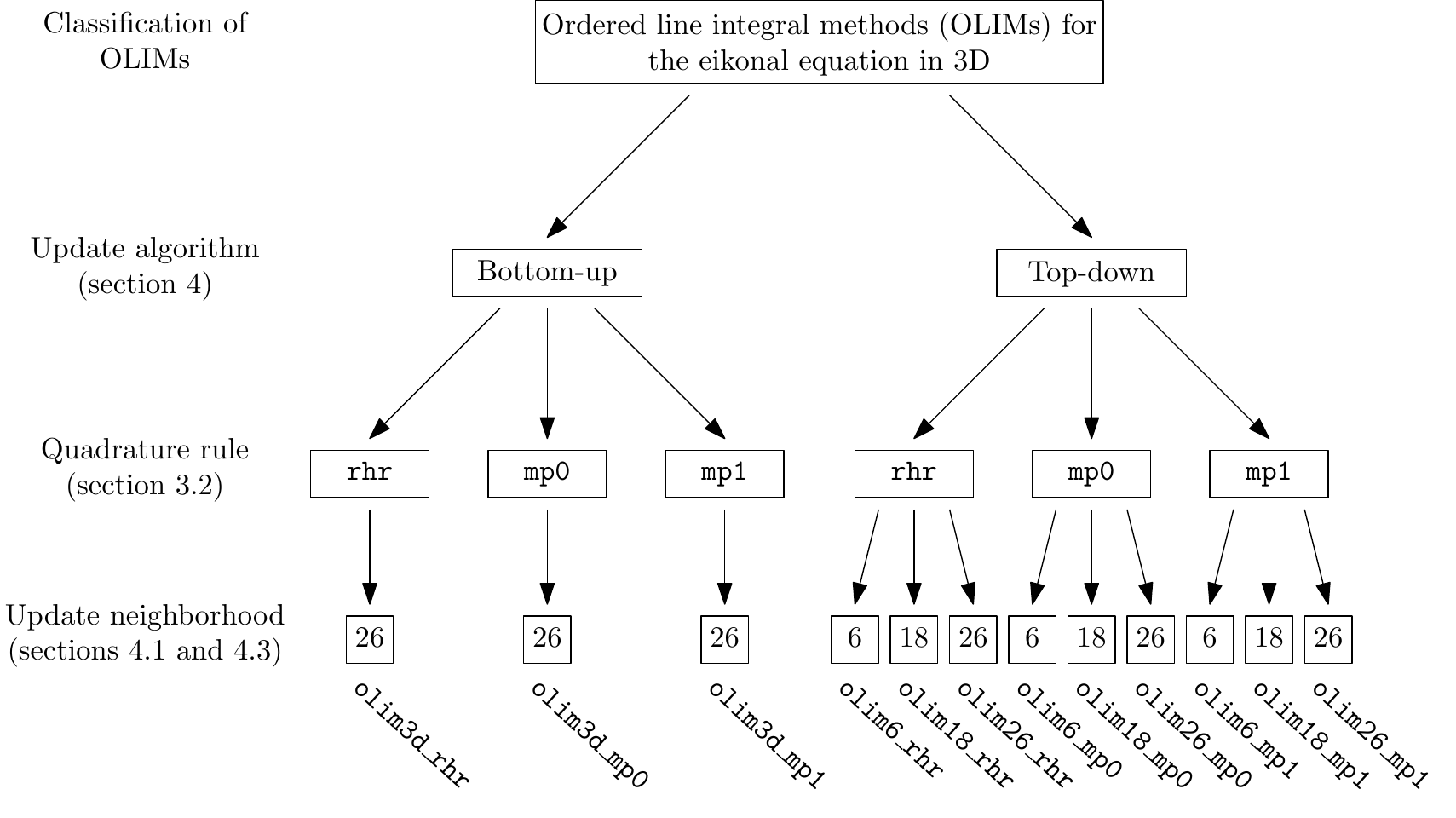}
  \caption{
    \emph{The family of Dijkstra-like solvers designed and studied in
      this work.}  We refer to these as ordered line integral methods
    (OLIMs).  There are three ways of parametrizing the family: by
    selecting an update algorithm, by selecting a quadrature rule, and
    by (in the case of the \emph{top-down} update algorithm, by
    selecting a neighborhood size. Sections in the text that explain
    these choices in detail are indicated. A shorthand notation for
    referring to each parametrized algorithms is listed for each
    algorithm that is involved in numerical tests (e.g.,
    \texttt{olim3d\_mp0}).
  }\label{fig:classification}
\end{figure}

Different numerical methods have been proposed for the solution of the
eikonal equation; generally, there are direct solvers and iterative
solvers. The most popular direct solvers are based on Dijkstra's
algorithm (``Dijkstra-like''
solvers)~\cite{tsitsiklis1995efficient,sethian1996fast}, and the most
popular iterative method is the fast sweeping
method~\cite{tsai2003fast,zhao2005fast}. In this work, we develop a
family of Dijkstra-like solvers for the eikonal equation in 2D and 3D,
similar to the fast marching method (FMM) or ordered upwind methods
(OUMs)~\cite{sethian1996fast,sethian2003ordered}. In constrast to the
FMM and OUMs that use finite difference schemes, our solvers come
about by discretizing and minimizing the action functional for the
eikonal equation (see section\@
\ref{sec:minimum-action-integral}). The proposed family of algorithms
is parameterized by a choice of update algorithm (\emph{bottom-up} or
\emph{top-down}), quadrature rule (a righthand rule (\texttt{rhr}), a
simplified midpoint rule (\texttt{mp0}), and a midpoint rule
(\texttt{mp1})), and, in the case of \emph{bottom-up}, a neighborhood
size (6, 18, or 26 points in 3D). See figure \ref{fig:classification}.

Our \emph{bottom-up} and \emph{top-down} update algorithms
represent two separate approaches to minimizing the number of triangle
and tetrahedron updates that need to be done in 3D, while the
quadrature rules represent a trade-off between speed and accuracy of
the solver. The simplified midpoint rule (\texttt{mp0}) is a sweet
spot that requires extra theoretical justification, which we
provide. Overall, our goal is to explore the relevant algorithm design
trade-offs in 3D and find which solver performs best. Our conclusion
is that, in 3D, \texttt{olim3d\_mp0} is the best overall, and our
results are oriented towards supporting this claim.

Additionally, we modify our algorithms to solve the additively
factored eikonal equation~\cite{luo2012fast}: to enhance accuracy, we
solve the locally factored eikonal equation near point sources, which
recovers the global $O(h)$ error convergence expected from a
first-order method, where $h > 0$ is the uniform spacing between grid
points. This fixes the degraded $O(h \log h^{-1})$ convergence often
associated with point source eikonal problems~\cite{qi2018corner}
(see~\cite{zhao2005fast} for a proof of this error bound).

Our main results follow:
\begin{itemize}
\item \textbf{For 3D problems, we develop two separate update
    algorithms:} a \emph{bottom-up} (\texttt{olim3d}) algorithm, and a
  \emph{top-down} algorithm (\texttt{olim\emph{K}}, where
  \texttt{\emph{K}} \hspace{-0.1em}$=6,18,26$ is the size of
  neighborhood used). Each algorithm locally updates a grid point by
  performing a minimal number of triangle or tetrahedron
  updates. Depending on the quadrature rule, each update is calculated
  by solving a system of nonlinear equations either directly
  (\texttt{rhr} and \texttt{mp0}) or iteratively (\texttt{mp1}).
\item \textbf{We prove theorems relating our quadrature rules,
    rigorously justifying the \texttt{mp0} rule.} These results
  support our case that it is superior to the \texttt{mp1} rule. We
  note that this work was done in tandem with research on ordered line
  integral methods for computing the quasipotential 3D for nongradient
  SDEs~\cite{dahiya2017ordered,yang2019computing,dahiya2018ordered}. Unlike
  the quasipotential, the eikonal equation is simple enough to allow
  us to analyze and justify our algorithms. We are also able to obtain
  simpler solution methods and established performance guarantees.
\item \textbf{We conduct numerical experiments on test problems with
    analytic solutions.} The test problems include point source
  problems for different slowness (index of refraction) functions, and
  multiple point source problems with a linear speed function. All of
  these have analytical solutions, which we use as a ground truth. We
  also test our
\item \textbf{We perform tests involving the Marmousi model in 2D and
    3D.} We demonstrate that the improved directional coverage of
  \texttt{olim26} and \texttt{olim3d} leads to a gain in accuracy.
\item \textbf{We show that a significant improvement in accuracy is
    gained over the equivalent of the standard fast marching method in
    3D, \texttt{olim6\_rhr}.} Only a modest slowdown is incurred using
  our general framework, indicating that our approach is
  competitive. See figure \ref{fig:intro} to see the improvement of
  \texttt{olim3d\_mp0} over \texttt{olim6\_rhr}, and see section\@
  \ref{sec:numerical-results} for more details.
\item \textbf{We use Valgrind~\cite{nethercote2007valgrind} to profile
    our implementation.} Our results indicate that the time spent
  sorting the heap used to order nodes on the front is negligible for
  all practical problem sizes. Since our solvers otherwise run in
  $O(N^n)$ time, where $n$ is the dimension of the domain, we suggest
  that the $O(N^n \log N)$ cost of the algorithm is
  pessimistic. Memory access patterns play a much more significant
  role in scaling.
\end{itemize}

\begin{figure}
  \centering
  \includegraphics[width=\linewidth]{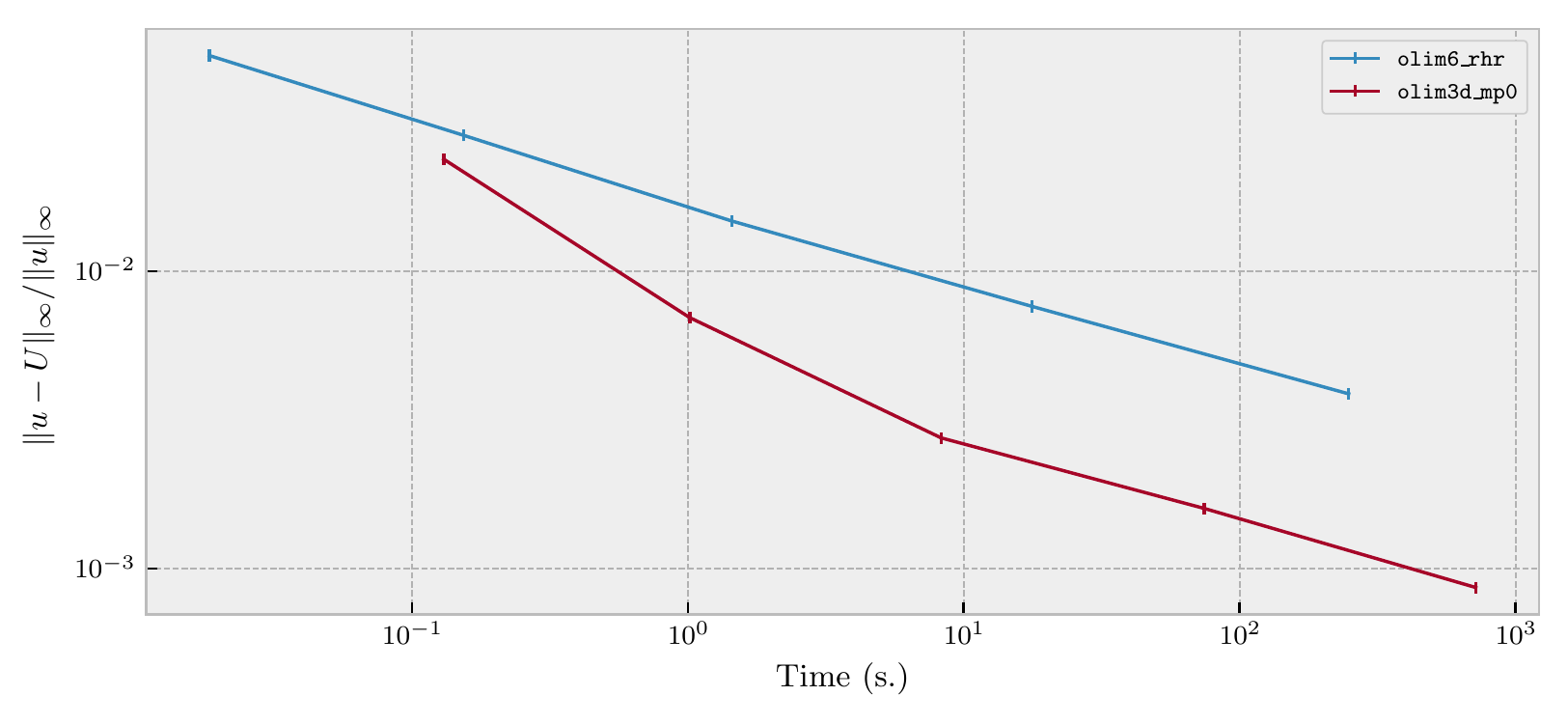}
  \caption{\emph{Comparing the relative $\ell_\infty$ errors of
      \texttt{olim3d\_mp0} and \texttt{olim6\_rhr}.} For the multiple
    point source problem in section\@ \ref{ssec:slotnick} with the
    domain $\Omega = [0, 1]^3$ discretized in each direction into
    $N = 2^p + 1$ (where $p = 5, \hdots, 9$), the total number of grid
    points is $N^3$.}\label{fig:intro}
\end{figure}

\subsection{Accessing our library} Our implementation is in
C++~\cite{stroustrup2013c++}. A link to a project website on GitHub
can be accessed from S.\ Potter's
website~\cite{sfp-umiacs-homepage}. The GitHub site includes
instructions for downloading and running basic
examples~\cite{libolim-github}. The code used to generate plots in
this paper is also available with
instructions~\cite{libolim-github-plotting}.

\section{Background}\label{sec:background}

In this section we provide a brief overview of the eikonal equation
and its numerical solution on a regular grid and sketch a generic
Dijkstra-like algorithm which we will refer to throughout the paper to
organize our results.

\subsection{The eikonal equation}

With $n \geq 2$, and given a domain $\Omega \in \R^n$, the eikonal
equation is:
\begin{equation}\label{eq:eikonal}
  \norm{\nabla u(x)} = s(x), \qquad x \in \Omega,
\end{equation}
where $\|\cdot\|$ denotes the $\ell_2$ norm unless otherwise stated,
and $s : \Omega \to (0, \infty)$ is a fixed, positive slowness
function, which forms part of the data. We solve for
$u : \Omega \to \R_+$. The boundary data is provided on a subset
$D \subset \Omega$ where $u$ has been fixed; i.e.,
$\left. u \right|_D = g$ for some $g : D \to \R_+$. As an example, if
$s \equiv 1$ and $g \equiv 0$, then the solution of eq.\@
\ref{eq:eikonal} is:
\begin{equation}
  \label{eq:distance-to-Omega}
  u(x) = d(x, D) = \min_{y \in D} \norm{x - y}.
\end{equation}
That is, $u$ is the distance to $D$ at each point in
$\Omega$.

To numerically solve eq.\@ \ref{eq:eikonal}, first let
$\calG = \{p_i\} \subseteq \Omega$ be the set or grid of nodes where
we would like to approximate the true solution $u$ with a numerical
solution $U : \calG \to \R_+$. Additionally, for each node
$p \in \calG$, define a set of neighbors,
$\neib(p) \subseteq \calG \backslash \set{p}$. Typically---for the
FMM, for instance---$\calG$ is taken to be a subset of a lattice in
$\R^n$ and $\neib(p)$ to be each node's $2n$ nearest neighbors. We
also define the set of boundary nodes, $\boundary \subseteq \calG$. It
may happen that the set $\boundary$ and $D$ do not coincide (e.g., $D$
could be a curve which does not intersect any points in $\calG$); to
reconcile this difference, the initial value of $U(p)$ for each
$p \in \boundary$ must take $g = \left. u \right|_D$ into account in
the best way possible. This problem has been approached in different
ways, and is not the focus of the present work~\cite{chopp2001some}.

Throughout, we make several simplifying assumptions.
\begin{itemize}
\item All boundary nodes coincide with grid points:
  $\boundary = D \subseteq \calG$.
\item The grid $\calG$ is a regular, uniform grid (a subset of a
  regular, uniform square lattice in 2D or cubic lattice in 3D). We
  denote grid nodes by $x \in \calG$.
\item When numerically computing a new value at a grid point
  $\hat{x} \in \calG$, we transform the neighborhood to the origin and
  scale the vertices so that they have integer values. The transformed
  update node is labeled $\hat{p}$. See section\@ \ref{ssec:quadrature}
  for a detailed explanation.
\end{itemize}

\subsection{Dijkstra-like algorithms}\label{ssec:dijkstra-like}

\begin{figure}[t]
  \centering
  \includegraphics[width=0.9\linewidth]{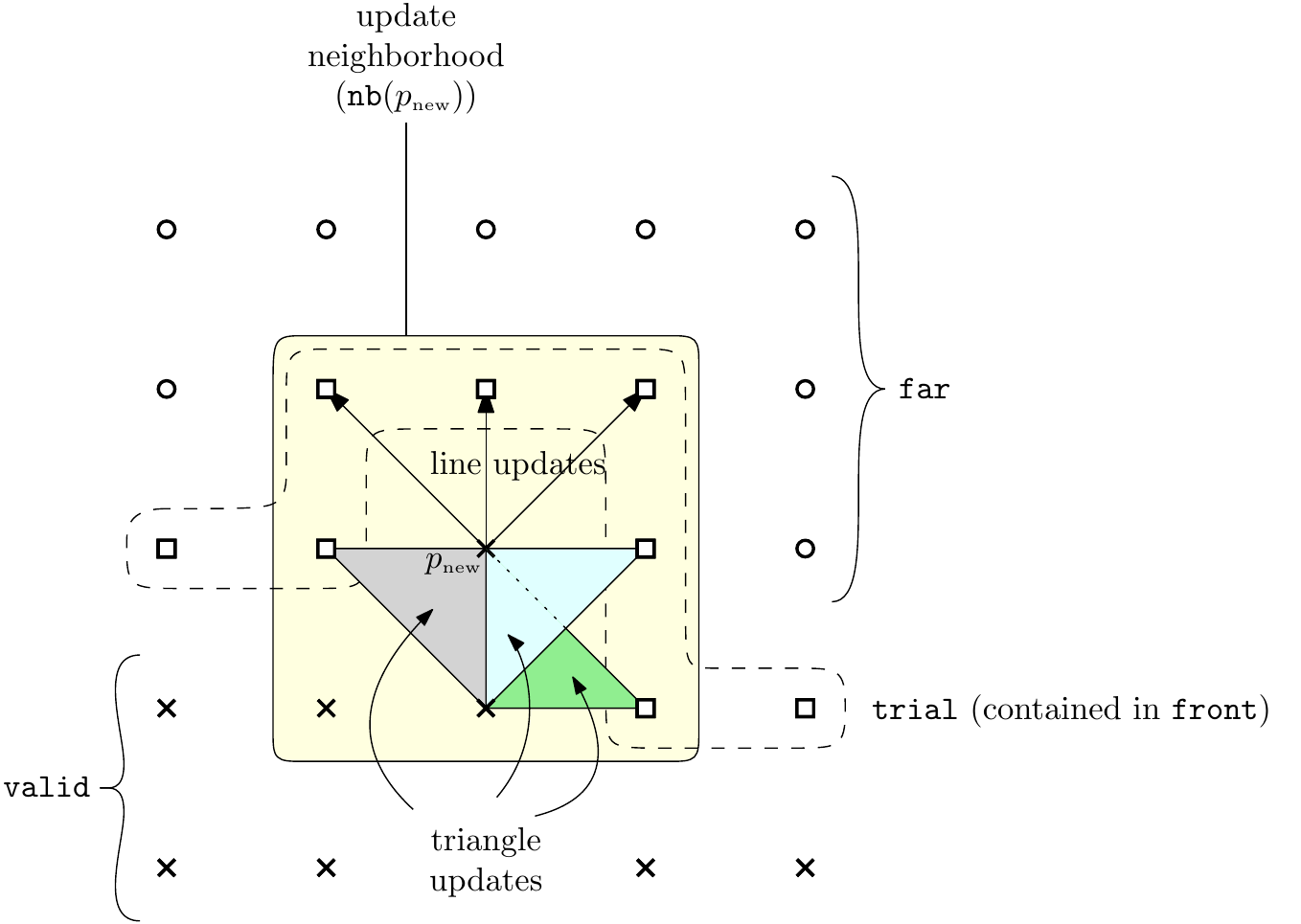}
  \caption{\emph{An overview of a Dijkstra-like algorithm for solving
      the eikonal equation (eq.\@ \ref{eq:eikonal}) in 2D.} See alg.\
    \ref{alg:dijkstra-like} for details. Nodes are labeled by state so
    that $\circ = \mathtt{far}$, $\square = \mathtt{trial}$, and
    $\times = \mathtt{valid}$. In this diagram, the node $\pnew$ has
    been removed from \texttt{front} and had its state set to
    \texttt{valid}. All \texttt{far} nodes in $\neib(\pnew)$ are set
    to \texttt{trial}, and then all \texttt{trial} nodes in
    $\neib(\pnew)$ are updated. The updates are depicted: there are
    three line updates and three triangles, since it is only necessary
    to perform updates that involve $\pnew$. The OLIM shown here is
    \texttt{olim8}. In 3D, there would also be tetrahedron updates.}
  \label{fig:overview}
\end{figure}

If we order nodes in $\mathcal{G}$ so that new solution values are
only computed using upwind nodes, the eikonal equation can be solved
directly; i.e., without the use of an iterative solver. This is done
using a continuous version of Dijkstra's algorithm for finding
shortest paths in a network. Other algorithms which solve similar
network flow problems can also be used, but have different complexity
guarantees~\cite{chacon2012fast}. In particular, Dijkstra's algorithm
is a type of label-setting method for finding shortest paths in a
network; there are also label-correcting
methods~\cite{bertsekas1998network}.

Using Dijkstra's algorithm to solve a ``continuous shortest path''
problem has been discovered in several contexts. The earliest such
development is a theoretical result in computational geometry due to
Mitchell, Mount, and Papadimitriou, who used this idea to compute
exact polyhedral shortest paths (``discrete geodesics'') on
triangulated surfaces~\cite{mitchell1987discrete}. This was followed
by Tsitsiklis who developed a first-order semi-Lagrangian method for
solving isotropic optimal control problems on a uniform
grid~\cite{tsitsiklis1995efficient}. Finally, the fast marching
method, which uses a first-order upwind finite difference scheme was
developed by Sethian to model isotropic front
propagation~\cite{sethian1996fast}. Many variations of these methods
have since been
developed~\cite{sethian2003ordered,kao2008legendre}. Our own
development resembles Tsitsisklis's, but extends it past its original
formulation. In particular, Tsitsiklis considers what we call
\texttt{olim8\_rhr} and \texttt{olim26\_rhr}, and does not treat the
3D case in depth. We generalize this approach, considering higher
accuracy quadrature rules (\texttt{mp0} and \texttt{mp1}) and
algorithms which make 3D solvers fast (our \emph{bottom-up} and
\emph{top-down} algorithms). We also note that Bornemann and Rasch
have investigated the local variational approach (\`{a} la
Tsitsiklis), and have present detailed theoretical results---their
approach is complementary to our own, but has a much different
emphasis~\cite{bornemann2006finite}.

To write down a generic Dijkstra-like algorithm, there are several
pieces of information which need to be kept track of. A data structure
called \texttt{front} tracks \texttt{trial} nodes while the solver
runs (typically an array-based heap). For each node $p$, apart from
the current value of $U(p)$, the most salient piece of information is
its state, written $p$\texttt{.state}
$\in \set{\texttt{valid}, \texttt{trial}, \texttt{far}}$. To fix
ideas, consider the following high-level Dijkstra-like algorithm:

\begin{algorithm}
  \caption{A generic Dijkstra-like algorithm for solving the eikonal
    equation.}\label{alg:dijkstra-like}
  \begin{enumerate}[nolistsep]
  \item For each $p \in \calG$, set $p$\texttt{.state} $\gets$
    \texttt{far} and $U(p) \gets \infty$.
  \item For each $p \in \boundary$, set $p$\texttt{.state} $\gets$
    \texttt{trial}, and set $U(p)$ to a user-defined value.
  \item While there are \texttt{trial} nodes left in $\calG$:
    \begin{enumerate}[nolistsep]
    \item Let $\pnew$ be the \texttt{trial} node in \texttt{front}
      with the smallest value $U(\pnew)$.\label{enum:get-node}
    \item Set $\pnew$\texttt{.state} $\gets$ \texttt{valid} and remove
      $\pnew$ from \texttt{front}.
    \item For each $\hat{p} \in \neib(\pnew)$, set
      $\hat{p}$\texttt{.state} $\gets$ \texttt{trial} if
      $\hat{p}$\texttt{.state} $=$ \texttt{far}.\label{enum:set-trial}
    \item For each $\hat{p} \in \neib(\pnew)$ such that
      $\hat{p}$\texttt{.state} $=$ \texttt{trial}, update
      $\hat{U} = U(\hat{p})$ and merge $\hat{p}$ into
      \texttt{front}.\label{enum:update-U}
    \end{enumerate}
  \end{enumerate}
\end{algorithm}

Specifying how item \ref{enum:update-U} is to be performed is the crux
of developing a Dijkstra-like algorithm and is left intentionally
vague here. This step involves indicating how nodes in
$\neib(\hat{p})$ are used to compute $\hat{U}$, and how they are
organized into the \texttt{front} data structure. The FMM uses an
upwind finite difference scheme where only \texttt{valid} nodes are
used to compute $\hat{U}$, and where nodes on the front are sorted
using an array-based heap implementing a priority
queue~\cite{sethian1996fast}. As an example, Tsitsiklis's algorithm
combines nodes in \texttt{valid} into sets whose convex hulls
approximate the surface of the expanding wavefront and then solves
local functional minimization problems. The method presented here is
more similar to Tsitsiklis's algorithm (see figure
\ref{fig:overview}). For specific details, a general reference should
be consulted~\cite{sethian1999level}.

In addition to item \ref{enum:update-U}, algorithm
\ref{alg:dijkstra-like} is generic in the following ways:
\begin{itemize}
\item As we mentioned before, there are different ways of initializing
  the boundary data $\boundary$ if only off-grid boundary data is
  provided~\cite{chopp2001some}.
\item How we keep track of the node with the smallest value is
  variable: most frequently, as in Dijkstra's algorithm, a heap
  storing pointers to the nodes is used, leading to $O(N^n \log N)$
  update operations overall, where $N^n$ is the number of nodes. In
  fact, there are $O(N^n)$ variations using Dial's algorithm (a
  bucketed version of Dijkstra's algorithm), but these have not been
  used as extensively as Dijkstra-like
  algorithms~\cite{tsitsiklis1995efficient,kim2001calo,yatziv2006n}.
\item The arrangement of the nodes into a grid or otherwise varies, as
  do the neighborhoods of each node. This affects the update
  procedure. A regular grid is simple to deal with, but Dijkstra-like
  methods have been extended to manifolds and unstructured meshes,
  where the situation is more
  involved~\cite{kimmel1998computing,sethian2000fast,bronstein2008numerical}.
\end{itemize}
Other problems can be solved using Dijkstra-like algorithms: the
static Hamilton-Jacobi equation, an anisotropic generalization of the
eikonal equation, can be solved using the ordered upwind
method~\cite{sethian2003ordered} or other recently introduced
methods~\cite{mirebeau2014efficient,mirebeau2014anisotropic}.  The
quasipotential of a nongradient stochastic differential equation can
also be computed using the ordered line integral method, although the
considerations are more
involved~\cite{dahiya2017ordered,dahiya2018ordered,yang2019computing}.

\subsection{Fast sweeping methods} Another approach to solving a
discretized version of eq.\@ \ref{eq:eikonal} is the fast sweeping
method~\cite{tsai2003fast,zhao2005fast}. Unlike Dijkstra-like methods,
which are direct solvers, the fast sweeping method is an iterative
solver using an upwind scheme and rotating sweep directions, which
obtains $O(N^n)$ complexity---however, the constant in this asymptotic
estimate depends heavily on how frequently the characteristics change
direction, and how complicated the geometry of the domain is. Fast
sweeping methods have been extended to Hamilton-Jacobi
equations~\cite{tsai2003fast,kao2008legendre}, and hybrid methods
combining the fast sweeping method with a Dijkstra-like method have
been introduced
recently~\cite{chacon2012fast,chacon2015parallel}. Additionally,
higher-order fast sweeping methods based on ENO and WENO schemes have
been developed~\cite{zhang2006high}. Luo and Zhao also provide a
detailed investigation of the convergence properties of fast sweeping
methods for static convex Hamilton-Jacobi equations, but with a focus
on the 2D case~\cite{luo2016convergence}.

\section{Ordered line integral methods for the eikonal equation}\label{sec:olim}

The fast marching method~\cite{sethian1996fast} solves a discretized
eikonal equation (eq.\@ \ref{eq:eikonal}) in an upwind
fashion. Throughout, we distinguish between the exact solution $u$ and
the numerical solution $U$, where $\hat{U}$ will always denote the
current value to be computed; likewise, any quantity with a hat (\^{})
will denote a quantity evaluated at the node being updated. The
ordered line integral method locally and approximately minimizes the
minimum action integral of eq.\@ \ref{eq:eikonal}:
\begin{equation}
  \label{eq:action-functional}
  \hat{u} = \min_{\alpha} \set{u_0 + \int_\alpha s(x) dl},
\end{equation}
where $\alpha$ is a ray parametrized by arc length, $\hat{x}$ is a
target point, $\hat{u} = u(\hat{x})$, and $u_0 = u(\alpha(0))$ (see
section\@ \ref{sec:minimum-action-integral} for a derivation of eq.\
\ref{eq:action-functional}). By constrast, Lagrangian methods (i.e.,
raytracing methods) trace a bundle of rays from a common locus by
integrating Hamilton's equations for the eikonal equation for
different initial conditions.

In this section, we describe how we discretize and minimize an
approximation of eq.\@ \ref{eq:action-functional}. As we mentioned in
section\@ \ref{ssec:dijkstra-like}, to compute $\hat{U} = U(\hat{p})$
in item \ref{enum:update-U} of algorithm \ref{alg:dijkstra-like}, we
need to approximately minimize several instances of an approximation
to eq.\@ \ref{eq:action-functional}; details of this procedure are
discussed in section\@ \ref{sec:implementation}. In this section, we
focus on a single instance of the discretized version of eq.\@
\ref{eq:action-functional}. We present our notation, derive prelimary
results, and describe the quadrature rules \texttt{mp0}, \texttt{mp1},
and \texttt{rhr}. We also show how the functional minimization problem
can be solved exactly using a QR decomposition for the \texttt{rhr}
and \texttt{mp0} rules. Finally, we present theoretical results
justifying our approach.

\begin{figure}
  \centering
  \includegraphics[width=.39864\textwidth]{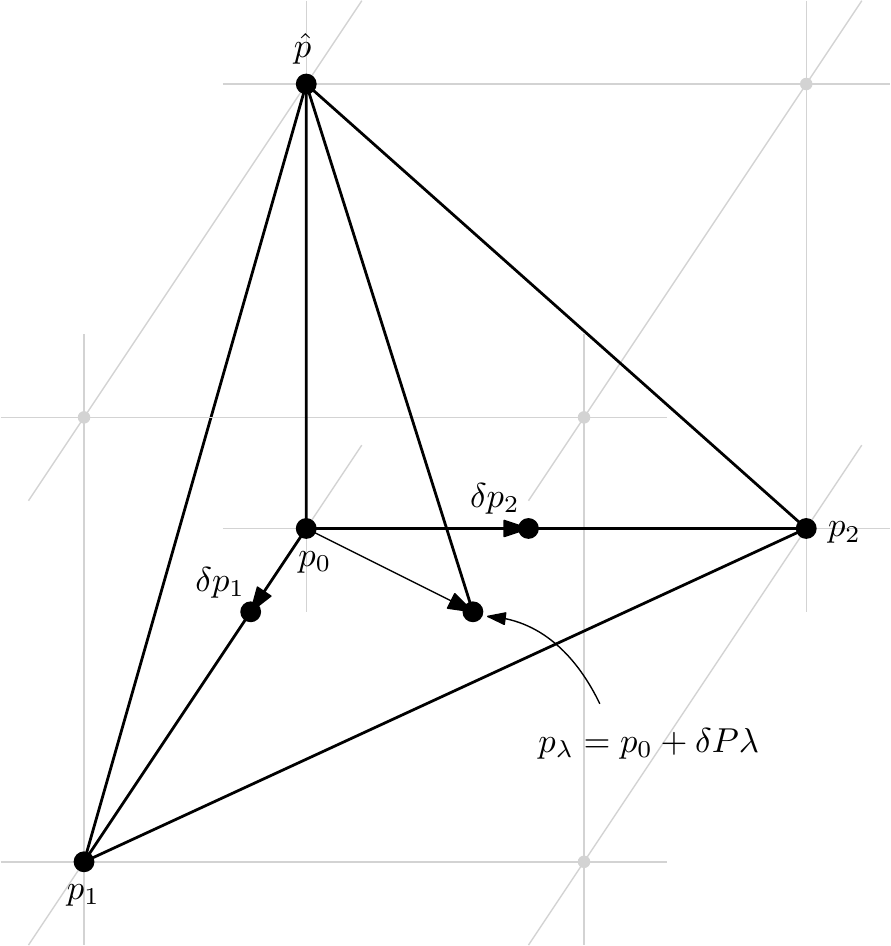}
  \hspace{3em}
  \includegraphics[width=.43912\textwidth]{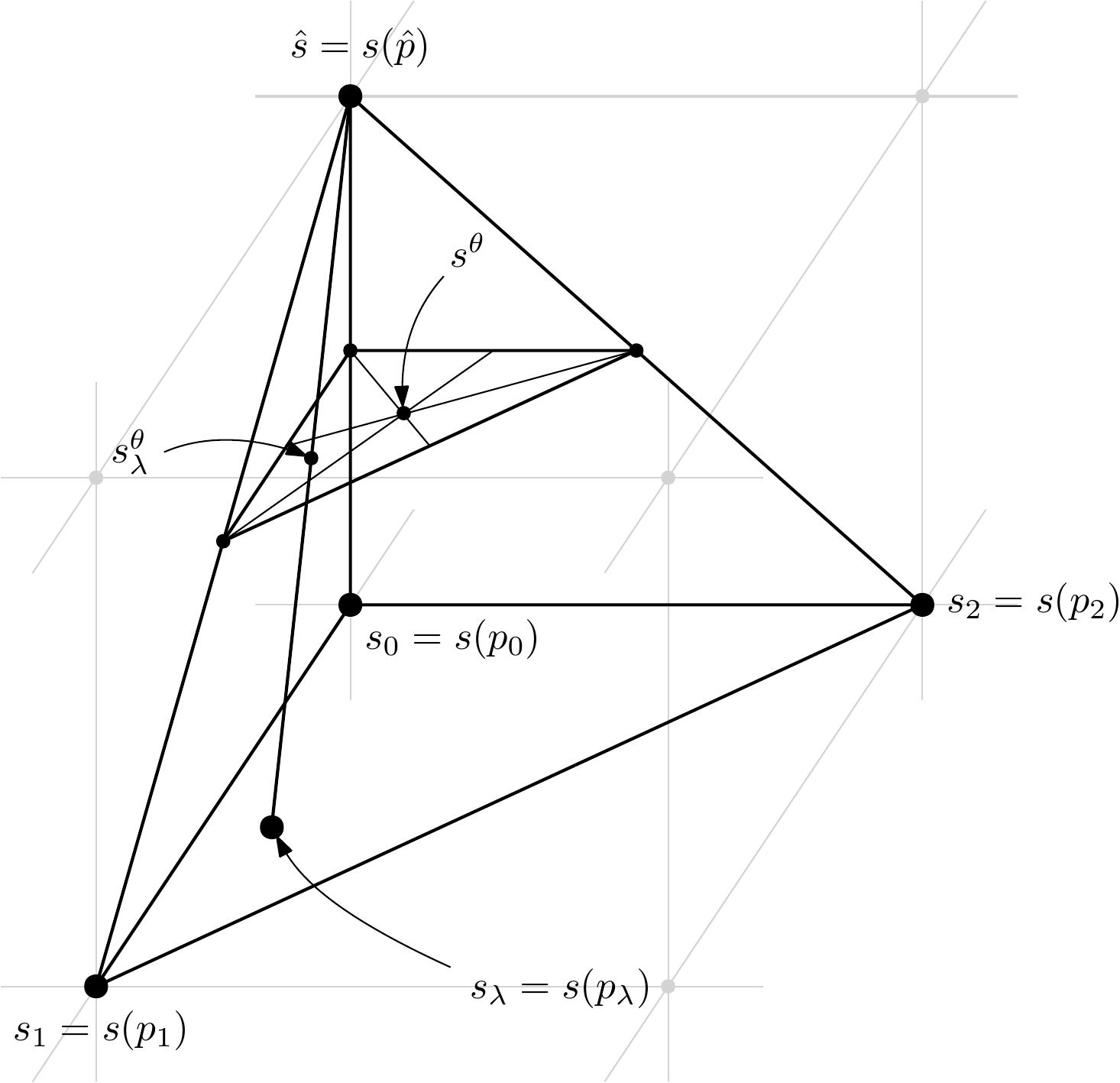}
  \caption{\emph{Overview of a tetrahedron update, showing the
      notation in section\@ \ref{sec:olim}}. Left: a point being
    updated, $\hat{p}$, which is identified with the origin, and three
    neighboring points $p_0, p_1$, and $p_2$ that are assumed to be
    \texttt{valid}. The grid $\mathcal{G}$, which contains other
    points in the discretized domain, is sketched in light grey. The
    domain of the minimization problem eq.\@
    \ref{eq:constrained-minimization} is the convex hull of
    $p_0, p_1$, and $p_2$. The path minimizing eq.\@
    \ref{eq:action-functional} is assumed to be the line segment
    connecting $\hat{p}$ and $p_\lambda$. Not pictured is the newly
    \texttt{valid} point $\pnew$, although it is assumed that $\pnew$
    equals one of $p_0, p_1$, or $p_2$. Right: the same update
    tetrahedron, but this time with quantities related to the slowness
    function depicted.}\label{fig:simplex-diagrams}
\end{figure}

\subsection{Approximating the action functional}\label{ssec:quadrature}

In this section we describe how we approximate eq.\@
\ref{eq:action-functional} and reformulate it as a constrained
optimization problem which we solve to update the \texttt{trial} nodes
surrounding a node $\pnew$ which has just become \texttt{valid}.

First, we assume that $\Omega \subseteq \mathbb{R}^n$, where
$n = 2, 3$. The methods presented here work for general $n$. We refer
to each update as a ``simplex update'', since for a dimension $n$, we
need to consider updates of dimensions $d$ where $0 \leq d < n$. For
$n = 3$, we have line updates ($d = 0$), triangle updates ($d = 1$),
and tetrahedron updates ($d = 2$). So, $d$ refers to the dimension of
the base of each simplex, which is the dimension of the domain of the
optimization problem that we will formulate.

We assume that each update simplex is nondegenerate and that the
convex hull of the update point $\hat{p}$ and $d+1$ points
$p_0, \hdots, p_{d} \in \neib(\hat{p})$. Since we assume that our grid
$\mathcal{G}$ is uniform and rectilinear, we scale and translate
$\mathcal{G}$ so that $\hat{p} = 0$ and $\|p_i\|_\infty = 1$ for
$i = 0, \hdots, d$. \emph{Throughout the rest of the paper, we always
  shift the node $\hat{p}$ to the origin, as this simplifies our
  calculations.}

\paragraph{Approximating the integration path with a straight line
  segment.} To approximately minimize eq.\@ \ref{eq:action-functional}
we assume that the minimizing path is a straight line segment
connecting $\hat{p}$ and a point in the convex hull of
$\{p_0, \hdots, p_d\}$, and numerically approximate the action over
this integral path using quadrature. We discuss each part of this
approximation in turn. First, some notation.

We parametrize the ``base of the update simplex'' (the convex hull of
$p_0, \hdots, p_d$) over the set:
\begin{equation}
  \label{eq:lambda}
  \Delta^d = \set{\lambda_i \geq 0 \mbox{ for } i = 1, \hdots, d \mbox{ and } \sum_{i=1}^d \lambda_i \leq 1}.
\end{equation}
If we let $\lambda_0 = 1 - \sum_{i=1}^d \lambda_i$, then
$(\lambda_0, \hdots, \lambda_d)$ is a vector of convex coefficients.
We let $\delp_i = p_i - p_0$ and define:
\begin{equation}
  \delP = \begin{bmatrix}
    \delp_1 & \cdots & \delp_d
  \end{bmatrix} \in \mathbb{R}^{n \times d}.
\end{equation}
We write a point in the base of the update simplex as:
\begin{equation}
  p_\lambda = p_0 + \sum_{i=1}^d (p_i - p_0) \lambda_i = p_0 + \sum_{i=1}^d \delp_i \lambda_i = p_0 + \delP \lambda.
\end{equation}

We will use the ``$\delta$'' notation for differences and $\lambda$ as
a subscript to denote convex combinations in other contexts, as
well. E.g., $\delU_i = U_i - U_0 = U(x_i) - U(x_0)$ and
$U_\lambda = U_0 + \delU^\top \lambda$. Likewise,
$\dels_i = s_i - s_0 = s(x_i) - s(x_0)$. By an abuse of notation, we
will think of, e.g., $s_i$ and $s(x_i)$ in the context of an update as
``the same'', preferring the notation $s_i$.

\paragraph{Quadrature rules.} We consider a righthand rule
(\texttt{rhr}), a simplified midpoint rule (\texttt{mp0}), and a
midpoint rule (\texttt{mp1}). Recall that $\hat{s} = s(\hat{x})$. The
cost functions being minimized in eq.\@ \ref{eq:action-functional}
are:
\begin{align}\label{eq:quadrature-rules}
  \Frhr(\lambda) &= U_\lambda + \hat{s} h \|p_\lambda\|, \\
  \Fmpzero(\lambda) &= U_\lambda + \parens{\frac{\hat{s} + \tfrac{1}{d+1} \sum_{i=0}^d s_i}{2}} h \|p_\lambda\|, \\
  \Fmpone(\lambda) &= U_\lambda + \parens{\frac{\hat{s} + s_\lambda}{2}} h \|p_\lambda\|.
\end{align}
The difference in the quadrature rules, of course, lies in how we
incorporate the slowness $s$. For $\Frhr$, we evaluate $s$ at the
righthand side of the integral, yielding $\hat{s}$. For $\Fmpone$, we
evaluate $s$ at the midpoint of the integral, approximating $s$
linearly with the convex combination $s_\lambda$ on the base of the
simplex. Finally, for $\Fmpzero$, we approximate $s_\lambda$ itself
with the arithmetic mean of the $s_i$'s. It will turn out that
$\Fmpzero$ will lead to an inconsistent numerical scheme unless extra
care is taken, which is discussed in the rest of the work.

Note that in the above we use $s_\lambda$ and not $s(p_\lambda)$
because we do not want to assume that we have access to a continuous
functional form for $s$; in most cases, we assume that $s$ will be
provided as gridded data, and that interpolation will be used to
approximate $s$ off-grid.

\paragraph{More general quadrature rules.} The quadrature rules above
are specializations of the following more general quadrature
rules. With $\theta$ such that $0 \leq \theta \leq 1$, we define:
\begin{align}
  F_0(\lambda) &= U_\lambda + \squareb{(1 - \theta)\hat{s} + \frac{\theta}{d+1} \sum_{i=0}^d s_i} h \|p_\lambda\|, \\
  F_1(\lambda) &= U_\lambda + \bigg[ (1 - \theta)\hat{s} + \theta s_\lambda \bigg] h \|p_\lambda\|.
\end{align}
Then, $\Frhr = F_0 = F_1$ with $\theta = 0$, $\Fmpzero = F_0$ with
$\theta = \tfrac{1}{2}$, and $\Fmpone = F_1$ with
$\theta = \tfrac{1}{2}$. To simplify notation in our proofs, we also
define:
\begin{equation}
  s^\theta = {(1 - \theta)} + \frac{\theta}{d + 1} \sum_{i=0}^d s_i, \qquad s^\theta_\lambda = (1 - \theta) \hat{s} + \theta s_\lambda.
\end{equation}
Then, the $\theta$-rules can be written more compactly as:
\begin{equation}
  F_0(\lambda) = U_\lambda + s^\theta h \|p_\lambda\|, \qquad F_1(\lambda) = U_\lambda + s^\theta_\lambda h \|p_\lambda\|.
\end{equation}

We introduce this more general
``$\theta$-rule'' for two reasons:
\begin{itemize}
\item This is a natural geometric generalization, and we wish
  to contextualize our results properly.
\item Our proofs are written in terms of the $\theta$-rules, which
  allows us to provide proofs for $F_0$ and proofs for $F_1$, instead
  of separate proofs for each of $\Frhr$, $\Fmpzero$, and
  $\Fmpone$. As a bonus, our proofs apply to other $\theta$-rules not
  considered here. For instance, schemes using $\theta = 1$ or
  $\theta$ chosen adaptively may be of interest.
\end{itemize}

\subsection{The minimization problem}\label{ssec:minimization-problem}

With $F_0$ and $F_1$ so defined, the minimization problem which
approximates eq.\@ \ref{eq:action-functional} is:
\begin{equation}
  \label{eq:constrained-minimization}
  \hat{U} = \min_{\lambda \in \Delta^d} F(\lambda),
\end{equation}
where $F = \Frhr, \Fmpzero$, or $\Fmpone$. This is a nonlinear,
constrained optimization problem with linear inequality constraints
and no equality constraints. We require the gradient and Hessian of
$F_0$ and $F_1$ for our algorithms and analysis. These are easy to
compute, but we have found a particular form for them to be convenient
for both implementation and analysis. The proofs of all propositions
and lemmas in this section can be found in section\@
\ref{sec:minimization-proofs}.

In what follows, we will use the notation:
\begin{equation}
  \proj_p = \frac{p p^\top}{p^\top p}, \qquad \proj_p^\perp = I - \proj_p = I - \frac{p p^\top}{p^\top p}
\end{equation}
for orthogonal projection matrices. Here, $\proj_p$ projects
orthogonally onto $\operatorname{span}(p)$, and $\proj_p^\perp$ onto
its orthogonal complement.

\begin{proposition}\label{prop:F0-grad-and-Hess}
  The gradient and Hessian of $F_0(\lambda)$ are given by:
  \begin{align}
    \nabla F_0(\lambda) &= \delU + s^{\theta} h \delP^\top \nu_\lambda,\label{eq:F0-grad} \\
    \nabla^2 F_0(\lambda) &= \frac{s^{\theta} h}{\|p_\lambda\|} \delP^\top \proj^\perp_{p_\lambda} \delP,
  \end{align}
  where $\nu_\lambda = p_\lambda/\|p_\lambda\|$ is the unit vector in the
  direction of $p_\lambda$.
\end{proposition}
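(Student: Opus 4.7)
The proof is a direct calculus exercise; the plan is to differentiate each piece of $F_0(\lambda) = U_\lambda + s^\theta\, h\, \|p_\lambda\|$ in turn and then assemble the results, using the fact that $s^\theta$ does not depend on $\lambda$.

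First I would handle the linear part. Since $U_\lambda = U_0 + \delU^\top \lambda$ is affine in $\lambda$, we immediately get $\nabla U_\lambda = \delU$ and $\nabla^2 U_\lambda = 0$, so all curvature comes from the norm term. Similarly, because $p_\lambda = p_0 + \delP\lambda$, the Jacobian of the map $\lambda \mapsto p_\lambda$ is just $\delP$, a fact that will be used repeatedly via the chain rule.

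Next I would compute $\nabla\|p_\lambda\|$. Writing $\|p_\lambda\| = \sqrt{p_\lambda^\top p_\lambda}$ and differentiating gives $\partial_{\lambda_i}\|p_\lambda\| = p_\lambda^\top \delp_i / \|p_\lambda\|$, hence $\nabla\|p_\lambda\| = \delP^\top \nu_\lambda$ with $\nu_\lambda = p_\lambda/\|p_\lambda\|$. Combining with the linear part and pulling out the constant $s^\theta h$ yields the formula for $\nabla F_0$ in \eqref{eq:F0-grad}.

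For the Hessian, the only nontrivial step is differentiating the unit vector $\nu_\lambda$. Applying the quotient rule componentwise,
\begin{equation*}
\partial_{\lambda_j} \nu_\lambda = \frac{\delp_j}{\|p_\lambda\|} - \frac{p_\lambda (p_\lambda^\top \delp_j)}{\|p_\lambda\|^3} = \frac{1}{\|p_\lambda\|}\left(I - \frac{p_\lambda p_\lambda^\top}{\|p_\lambda\|^2}\right)\delp_j = \frac{\proj^\perp_{p_\lambda}\,\delp_j}{\|p_\lambda\|},
\end{equation*}
so that the Jacobian of $\lambda\mapsto \nu_\lambda$ is $\proj^\perp_{p_\lambda}\,\delP/\|p_\lambda\|$. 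Applying the chain rule once more to $\nabla F_0 = \delU + s^\theta h\,\delP^\top \nu_\lambda$ (and again using that $\delU$ and $s^\theta$ are constant) gives the stated Hessian. The calculation contains no real obstacle; the only thing worth highlighting for the reader is the identification of $I - \nu_\lambda\nu_\lambda^\top$ with $\proj^\perp_{p_\lambda}$, which is the geometric content of the formula and makes clear why the Hessian is positive semidefinite and degenerates precisely along the radial direction $p_\lambda$.
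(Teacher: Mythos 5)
Your proposal is correct and follows essentially the same route as the paper's proof: a direct differentiation using the affine structure of $p_\lambda$ and $U_\lambda$, with the only organizational difference being that you differentiate the unit vector $\nu_\lambda$ via the quotient rule where the paper applies the product rule to $\|p_\lambda\|^{-1} p_\lambda^\top \delP$ --- both yield the projector $\proj^\perp_{p_\lambda}$ identically. No gaps.
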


\begin{proposition}\label{prop:F1-grad-and-Hess}
  The gradient and Hessian of $F_1(\lambda)$ satisfy:
  \begin{align}
    \nabla F_1(\lambda) &= \delU + \theta h \|p_\lambda\| \dels + s^{\theta}_\lambda h \delP^\top \nu_\lambda, \\
    \nabla^2 F_1(\lambda) &= \set{\delP^\top \nu_\lambda, \theta h \dels} + \frac{s^\theta_\lambda h}{\|p_\lambda\|} \delP^\top \proj^\perp_{p_\lambda} \delP, \label{eq:hess-F1}
  \end{align}
  where $\set{a, b} = ab^\top + ba^\top$ is the anticommutator of two
  vectors.
\end{proposition}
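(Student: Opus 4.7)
The proof is a direct calculation using the chain rule, building on the ingredients already needed for Proposition \ref{prop:F0-grad-and-Hess}. The plan is to write $F_1(\lambda) = U_\lambda + s^\theta_\lambda\, h\, \|p_\lambda\|$ with the three factors $U_\lambda = U_0 + \delU^\top\lambda$, $s^\theta_\lambda = (1-\theta)\hat{s} + \theta(s_0 + \dels^\top \lambda)$, and $\|p_\lambda\| = \|p_0 + \delP\lambda\|$, and to differentiate term by term.

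First I would record the three elementary gradients: $\nabla_\lambda U_\lambda = \delU$, $\nabla_\lambda s^\theta_\lambda = \theta\, \dels$, and, via the chain rule, $\nabla_\lambda \|p_\lambda\| = \delP^\top \nu_\lambda$ where $\nu_\lambda = p_\lambda/\|p_\lambda\|$. Applying the product rule to $s^\theta_\lambda\, h\, \|p_\lambda\|$ immediately yields
\begin{equation*}
  \nabla F_1(\lambda) = \delU + \theta h \|p_\lambda\| \dels + s^\theta_\lambda h \delP^\top \nu_\lambda,
\end{equation*}
which is the first claim.

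For the Hessian I would differentiate this gradient componentwise. The term $\delU$ contributes zero. The term $\theta h \|p_\lambda\| \dels$ contributes $\theta h\, \dels (\delP^\top \nu_\lambda)^\top$ since only $\|p_\lambda\|$ depends on $\lambda$. The term $s^\theta_\lambda h \delP^\top \nu_\lambda$ splits by the product rule into (i) $\theta h (\delP^\top \nu_\lambda) \dels^\top$, from differentiating the scalar $s^\theta_\lambda$, and (ii) $s^\theta_\lambda h\, \delP^\top (\nabla_\lambda \nu_\lambda)$. For (ii) I would reuse the key identity from the $F_0$ proof, namely that the Jacobian of the unit direction is $\nabla_\lambda \nu_\lambda = \|p_\lambda\|^{-1}\, \proj^\perp_{p_\lambda}\, \delP$, giving the $\|p_\lambda\|^{-1} s^\theta_\lambda h\, \delP^\top \proj^\perp_{p_\lambda} \delP$ piece of eq.~\ref{eq:hess-F1}. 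The two cross terms combine as $\theta h \bigl[(\delP^\top \nu_\lambda)\dels^\top + \dels(\delP^\top \nu_\lambda)^\top\bigr]$, which is exactly the anticommutator $\{\delP^\top \nu_\lambda,\, \theta h \dels\}$.

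There is no real obstacle here; the calculation is routine once Proposition \ref{prop:F0-grad-and-Hess} is in hand. The only place to be careful is symmetry: one must verify that the two cross contributions from $s^\theta_\lambda h\, \delP^\top \nu_\lambda$ and from $\theta h \|p_\lambda\| \dels$ are indeed transposes of each other so that $\nabla^2 F_1$ comes out symmetric, and then repackage them using the anticommutator notation. This is the step I would double-check before presenting the final expression.
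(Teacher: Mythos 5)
Your proposal is correct and follows essentially the same route as the paper: a direct product-rule/chain-rule computation of $\nabla F_1$ and then of $\nabla^2 F_1$, with the cross terms assembling into the anticommutator $\set{\delP^\top \nu_\lambda, \theta h \dels}$ and the remaining term coming from the Jacobian of $\nu_\lambda$. The only cosmetic difference is that the paper differentiates through $p_\lambda^\top p_\lambda$ (writing $\nabla \|p_\lambda\| = \tfrac{1}{2\|p_\lambda\|}\nabla p_\lambda^\top p_\lambda$) rather than using the identity $\nabla_\lambda \nu_\lambda = \|p_\lambda\|^{-1}\proj^\perp_{p_\lambda}\delP$ directly, and both yield the stated result after simplification.
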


Our task is to minimize $F_0$ and $F_1$ over the convex set
$\Delta^n$; so, we need to determine whether $F_0$ and $F_1$ are
convex functions. The next two lemmas address this point.

\begin{lemma}\label{lemma:dPt-cprojp-dP-pd}
  Let $p_0, \hdots, p_d$ form a nondegenerate simplex (i.e.,
  $p_0, \hdots, p_d$ are linearly independent) together with $\hat{p}$
  and assume that $s$ is positive. Then, $\nabla^2 F_0$ is positive
  definite and $F_0$ is strictly convex.
\end{lemma}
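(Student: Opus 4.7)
The plan is to factor the Hessian as a positive scalar times an obviously positive semidefinite Gram-like matrix, and then upgrade semidefiniteness to strict positive definiteness by exploiting the linear independence of $p_0,\ldots,p_d$.

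First I would dispense with the scalar prefactor $s^\theta h/\|p_\lambda\|$ in the formula from Proposition \ref{prop:F0-grad-and-Hess}. Positivity of $s$ together with $\theta\in[0,1]$ gives $s^\theta>0$, and $h>0$ by assumption. The only subtle point is that $\|p_\lambda\|\neq 0$: if $p_\lambda=\lambda_0 p_0+\lambda_1 p_1+\cdots+\lambda_d p_d$ were zero for some $\lambda\in\Delta^d$, the convex coefficients (which sum to $1$ and hence cannot all vanish) would give a nontrivial linear dependence among $p_0,\ldots,p_d$, contradicting nondegeneracy. So the scalar factor is strictly positive, and positive definiteness of $\nabla^2 F_0$ reduces to showing that $M := \delP^\top \proj^\perp_{p_\lambda}\delP$ is positive definite.

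Next, because $\proj^\perp_{p_\lambda}$ is an orthogonal projection, it is symmetric and idempotent, so for any $v\in\mathbb{R}^d$
\begin{equation*}
  v^\top M v = v^\top \delP^\top \proj^\perp_{p_\lambda}\proj^\perp_{p_\lambda}\delP v = \bigl\|\proj^\perp_{p_\lambda}\delP v\bigr\|^2 \geq 0,
\end{equation*}
which already shows $M\succeq 0$. The task is then to rule out nonzero $v$ in the kernel. If $v^\top M v = 0$ then $\delP v \in \operatorname{span}(p_\lambda)$, i.e.\ $\delP v = c\, p_\lambda$ for some scalar $c$.

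The main obstacle, and the step that actually uses the hypothesis, is converting $\delP v = c\,p_\lambda$ into a linear dependence of $p_0,\ldots,p_d$. Writing out both sides with $\delp_i = p_i - p_0$ and $p_\lambda = p_0 + \delP\lambda$, I would collect terms to obtain an identity of the form $a_0 p_0 + a_1 p_1 + \cdots + a_d p_d = 0$ with $a_i = v_i - c\lambda_i$ for $i\geq 1$ and $a_0 = -c\lambda_0 - \sum_{i\geq 1} v_i$ (using $\lambda_0 = 1 - \sum_{i\geq 1}\lambda_i$). Linear independence of $p_0,\ldots,p_d$ then forces every $a_i = 0$. Summing all the $a_i$ collapses neatly to $-c = 0$, whence $c=0$ and the relations $v_i = c\lambda_i$ give $v=0$. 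Thus $v^\top M v > 0$ for every $v\neq 0$, so $\nabla^2 F_0$ is positive definite on $\Delta^d$, and strict convexity of $F_0$ follows from positive definiteness of its Hessian on the convex domain.
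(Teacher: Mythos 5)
Your proof is correct and follows essentially the same route as the paper's: both reduce the claim to showing $\delP^\top \proj^\perp_{p_\lambda}\delP \succ 0$, obtain positive semidefiniteness from the projector structure, and then use linear independence of $p_0,\ldots,p_d$ to show the kernel is trivial (the paper phrases this step as ``$p_0\notin\operatorname{range}(\delP)$,'' while you carry out an explicit coefficient expansion of $\delP v = c\,p_\lambda$). Your additional check that $\|p_\lambda\|\neq 0$ on $\Delta^d$ is a detail the paper leaves implicit.
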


For $F_1$, we can only obtain convexity (let alone strict convexity)
for $h$ sufficiently small. For large enough $h$, we will encounter
nonconvex updates. To obtain convexity, we need to stipulate that the
slowness function $s$ is Lipschitz continuous on $\Omega$ with a
Lipschitz constant that is independent of $h$. In practice, we have
not found this to be a particularly stringent restriction.

\begin{lemma}\label{lemma:F-strictly-convex}
  In the setting of lemma \ref{lemma:dPt-cprojp-dP-pd}, additionally
  assume that $s$ is Lipschitz continuous with Lipschitz constant
  $K \leq C$ on $\Omega$, for some constant $C > 0$ independent of
  $h$. Then, $\nabla^2 F_1$ is positive definite (hence, $F_1$ is
  strictly convex) for $h$ small enough.
\end{lemma}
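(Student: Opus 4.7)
The plan is to write $\nabla^2 F_1$ as the sum of two pieces (using Proposition \ref{prop:F1-grad-and-Hess}), show that the projection term is bounded below by a multiple of $h I$, and show that the anticommutator term has operator norm $O(h^2)$. Then for $h$ small the first dominates and we conclude positive definiteness.

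First I would lower-bound the projection piece
\begin{equation*}
B(\lambda) = \frac{s^\theta_\lambda h}{\|p_\lambda\|} \delP^\top \proj^\perp_{p_\lambda} \delP.
\end{equation*}
This is exactly the kind of matrix handled in Lemma \ref{lemma:dPt-cprojp-dP-pd}: by the nondegeneracy of $\{\hat p, p_0,\dots,p_d\}$, the matrix $\delP^\top \proj^\perp_{p_\lambda}\delP$ is positive definite for each $\lambda$, and its smallest eigenvalue is a continuous function of $\lambda$ on the compact set $\Delta^d$, hence bounded below by some $c_1>0$ depending only on the (fixed, integer-coordinate) simplex. Since $s$ is positive and continuous, $s^\theta_\lambda \geq s_{\min} > 0$, and $\|p_\lambda\|$ is bounded above by the simplex diameter, so there is a constant $C_{\mathrm{lb}}>0$, independent of $\lambda$ and $h$, with $B(\lambda) \succeq C_{\mathrm{lb}}\, h\, I$.

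Next I would upper-bound the anticommutator piece
\begin{equation*}
A(\lambda) = \{\delP^\top \nu_\lambda,\; \theta h\, \dels\}.
\end{equation*}
The key observation is that the Lipschitz hypothesis gives the right power of $h$: because the physical spacing is $h$ and $\|\delp_i\|_\infty \le 2$, we have $|\dels_i| = |s(x_i)-s(x_0)| \leq K h\,\|\delp_i\| \leq C_0 h$, so $\|\theta h\, \dels\| = O(h^2)$. Since $\|\delP^\top \nu_\lambda\|$ is $O(1)$ uniformly in $\lambda$, the symmetric rank-two perturbation satisfies $\|A(\lambda)\| \leq 2\|\delP^\top\nu_\lambda\|\,\|\theta h\, \dels\| \leq C_{\mathrm{ub}}\, h^2$, and hence $A(\lambda) \succeq -C_{\mathrm{ub}}\, h^2\, I$.

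Adding the two estimates gives $\nabla^2 F_1(\lambda) \succeq (C_{\mathrm{lb}}\,h - C_{\mathrm{ub}}\,h^2)\, I$, which is positive definite whenever $h < C_{\mathrm{lb}}/C_{\mathrm{ub}}$; the bounds are uniform in $\lambda\in\Delta^d$, so $F_1$ is strictly convex on the simplex. The main obstacle is the uniform lower bound on the smallest eigenvalue of $\delP^\top\proj^\perp_{p_\lambda}\delP$ — one must invoke nondegeneracy together with the compactness of $\Delta^d$ and continuity in $\lambda$, since the projector $\proj^\perp_{p_\lambda}$ varies with $\lambda$ and could in principle align badly with $\delP$. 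The Lipschitz hypothesis then plays its role purely to gain the extra power of $h$ that lets the indefinite anticommutator term be absorbed.
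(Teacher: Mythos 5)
Your proposal is correct and follows essentially the same route as the paper's proof: the same decomposition of $\nabla^2 F_1$ into the positive definite projection term (with smallest eigenvalue bounded below by a constant times $h$) and the indefinite anticommutator term (with norm $O(h^2)$ via the Lipschitz bound $\|\dels\| = O(h)$), followed by absorbing the perturbation for $h$ small. Your explicit appeal to compactness of $\Delta^d$ and continuity in $\lambda$ to make the lower bound uniform is a minor tightening of a point the paper leaves implicit, not a different argument.
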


We have found that all \texttt{mp1} updates become strictly convex
problems rapidly as $h \to 0$. The reason for this is discussed at the
end of section\@ \ref{ssec:causality}.

\subsection{Validation of \texttt{mp0}}\label{ssec:validation}

\begin{figure}
  \centering
  \includegraphics[width=0.7\textwidth]{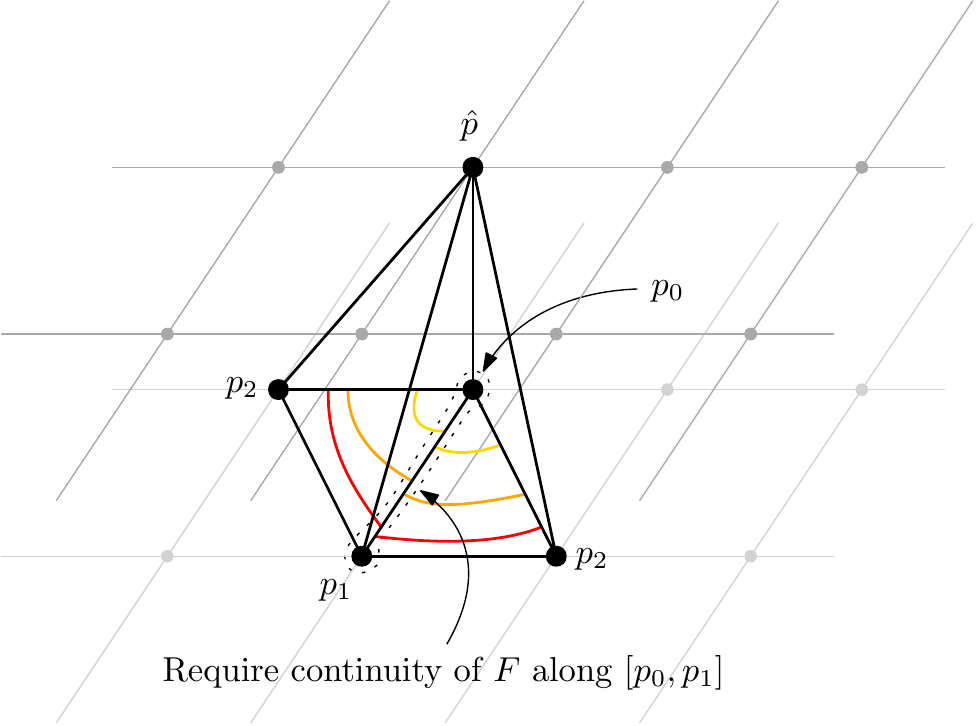}
  \caption{\emph{A problem with \texttt{mp0} for which we provide a
      simple solution.} The cost function $F$ must be continuous
    across the boundaries of adjacent update simplexes, otherwise an
    inconsistent solver can come about. Here, the colorered level sets
    depict the discontinuity of $\Fmpzero$ across the bases of the
    update simplexes. In this case, two adjacent update simplexes
    share a common boundary on their base, shown here as the line
    segment $[p_0, p_1]$. Two layers of surrounding grid points from
    $\mathcal{G}$ are shown. The point $\hat{p}$ is in the top layer
    and the points $p_0$, $p_1$, and $p_2$ are in the bottom
    layer.}\label{fig:continuity}
\end{figure}

If we use $\Fmpzero$ directly, then we run into a situation where the
cost function $\Fmpzero$ is not continuous between the bases of
adjacent update simplices (see fig.\@ \ref{fig:continuity}). We
require $F$ to be continuous across simplex boundaries to avoid an
inconsistent or divergent solver. Now, if we first use $\Fmpzero$ to
compute the minimizer $\lambda_0^*$ of eq.\@
\ref{eq:constrained-minimization} with $F = \Fmpzero$, and then set
$\hat{U} = \Fmpone(\lambda_0^*)$, we will recover continuity, and
indeed, as we will show, the scheme is convergent. The motivation this
is that eq.\@ \ref{eq:constrained-minimization} can be solved exactly
for the $\theta$-rule $F_0$ using a QR decomposition instead of an
iterative solver, making it very cheap. In the next section, we will
show how this can be done.

Let $\lambda_0^*$ and $\lambda_1^*$, denote the optima of eq.\@
\ref{eq:constrained-minimization}, where $F = F_0$ and $F = F_1$ (the
general $\theta$-rules), respectively. We could imagine using Newton's
method to minimize $F_1$, starting from $\lambda_0^*$ (to be clear,
this is not the approach we will ultimately take numerically). This
would allow us to use the convergence theory of Newton's method to
bound the distance between $\lambda_0^*$ and $\lambda_1^*$, thereby
bounding the error incurred by using \texttt{mp0} instead of
\texttt{mp1} to find the minimizing argument of eq.\@
\ref{eq:constrained-minimization}. We follow this idea now.

\begin{theorem}\label{thm:mp0-newton}
  Using lemma \ref{lemma:F-strictly-convex}, let $h$ be sufficiently
  small so that $F_1$ is strictly convex. Then, the error
  $\dellam^* = \lambda_1^* - \lambda_0^*$ satisfies
  $\norm{\dellam^*} = O(h)$. Further, if we let
  $\lambda_0 = \lambda_0^*$ in the following Newton iteration:
  \begin{equation}
    \label{eq:lam0-iter-to-lam1}
    \lambda_{k+1} \gets \lambda_k - \nabla^2 F_1(\lambda_k)^{-1} \nabla F_1(\lambda_k), \qquad k = 0, 1, \hdots,
  \end{equation}
  then this iteration is well-defined, and converges quadratically to
  $\lambda_1^*$. This immediately implies that the error incurred by
  \texttt{mp0} is $O(h^3)$ per update compared to \texttt{mp1}; i.e.:
  \begin{equation}
    \label{eq:mp0-error}
    \abs{F_1(\lambda_1^*) - F_1(\lambda_0^*)} = O(h^3).
  \end{equation}
\end{theorem}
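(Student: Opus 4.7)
The plan is to view $F_1$ as an $O(h^2)$ perturbation of $F_0$ at the gradient level, so that the minimizer $\lambda_0^*$ of $F_0$ is automatically an approximate critical point of $F_1$, and then let the strict convexity of $F_1$ (lemma \ref{lemma:F-strictly-convex}) convert this small-gradient information into a bound on $\lambda_1^* - \lambda_0^*$. Concretely, I would first subtract the gradient formulas from propositions \ref{prop:F0-grad-and-Hess} and \ref{prop:F1-grad-and-Hess} to get
\begin{equation*}
\nabla F_1(\lambda) - \nabla F_0(\lambda) = \theta h \|p_\lambda\| \dels + (s^\theta_\lambda - s^\theta)\, h\, \delP^\top \nu_\lambda.
\end{equation*}
Lipschitz continuity of $s$ with constant independent of $h$, combined with the fact that the physical base nodes $p_0,\hdots,p_d$ lie in a neighborhood of physical diameter $O(h)$, forces $\|\dels\| = O(h)$ and $|s^\theta_\lambda - s^\theta| = O(h)$; both terms on the right are therefore $O(h^2)$. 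Since $\nabla F_0(\lambda_0^*) = 0$ at an interior optimum (boundary cases are handled identically by tracking the KKT multipliers of the common active constraints), this yields $\nabla F_1(\lambda_0^*) = O(h^2)$.

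Next I would read off from \ref{eq:hess-F1} that $\nabla^2 F_1$ has leading term $(s^\theta_\lambda h / \|p_\lambda\|) \delP^\top \proj^\perp_{p_\lambda} \delP$, which scales linearly in $h$; by lemma \ref{lemma:F-strictly-convex} and nondegeneracy of the simplex, its smallest eigenvalue is bounded below by $c\, h$ for some $c > 0$ independent of $h$. Combining this lower bound with $\nabla F_1(\lambda_0^*) = O(h^2)$ and the strict-convexity inequality $\nabla F_1(\lambda_0^*)^\top(\lambda_0^* - \lambda_1^*) \geq \tfrac{1}{2}(\lambda_0^* - \lambda_1^*)^\top \nabla^2 F_1(\xi)(\lambda_0^* - \lambda_1^*)$ applied between the two minima immediately gives $\|\dellam^*\| = O(h)$.

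For the quadratic convergence and the $O(h^3)$ error estimate, I would invoke the standard Newton-Kantorovich theorem: if $\nabla^2 F_1$ is Lipschitz with constant $L$ on a ball of radius $r$ around $\lambda_1^*$ and its smallest eigenvalue there is at least $m$, then Newton from any starting point within distance $m/L$ of $\lambda_1^*$ is well-defined and converges quadratically with rate constant $L/(2m)$. Direct differentiation of $F_1$ shows $L = O(h)$ as well, so $m/L = O(1)$, and our starting point $\lambda_0^*$ sits at distance $O(h)$ from $\lambda_1^*$—well inside this ball for $h$ small. For the final bound, Taylor expand:
\begin{equation*}
F_1(\lambda_0^*) - F_1(\lambda_1^*) = \nabla F_1(\lambda_1^*)^\top \dellam^* + \tfrac{1}{2}(\dellam^*)^\top \nabla^2 F_1(\xi) \dellam^*.
\end{equation*}
At an interior optimum the first term vanishes, and with $\|\dellam^*\|^2 = O(h^2)$ and $\|\nabla^2 F_1\| = O(h)$ the quadratic term is $O(h^3)$, establishing \ref{eq:mp0-error}.

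The main technical obstacle I anticipate is \emph{not} the Newton theory itself, but rather the boundary cases: either $\lambda_0^*$ or $\lambda_1^*$ can lie on $\partial \Delta^d$, and when their active sets differ the clean Taylor expansion above is only valid after restricting to a common face. The fix is to observe that switching active faces can only happen when $\lambda_0^*$ lies within $O(h)$ of a face where $\nabla F_0$ is $O(h)$ along that face; the same perturbation bound $\|\nabla F_1 - \nabla F_0\| = O(h^2)$ then keeps $\lambda_1^*$ on the same face or on an adjacent one whose minimum differs by a compatible $O(h^3)$ amount, so the estimate survives. A secondary obstacle is verifying that the constants in $m,L,r$ really are independent of $h$ (as opposed to merely bounded for fixed $h$); this follows because after the factor of $h$ is stripped the remaining geometric quantities depend only on the fixed scaled simplex.
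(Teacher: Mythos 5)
Your proposal follows essentially the same route as the paper's proof: you bound $\|\nabla F_1(\lambda_0^*)\| = O(h^2)$ by comparing the gradients of $F_0$ and $F_1$ and using the Lipschitz continuity of $s$, bound the inverse Hessian by $O(h^{-1})$ and the Hessian's Lipschitz constant by $O(h)$, feed these into a Newton--Kantorovich theorem (the paper uses Theorem 5.3.2 of Stoer and Bulirsch, whose hypotheses are exactly your three estimates), and conclude with the same Taylor expansion of $F_1$ about $\lambda_1^*$. The only substantive difference is that you explicitly flag the boundary/active-set cases on $\partial \Delta^d$, which the paper's proof passes over by assuming $\nabla F_0(\lambda_0^*) = 0$ at the optimum.
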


\begin{proof}
  The proof of theorem \ref{thm:mp0-newton} is detailed in appendix
  \ref{app:validation-proofs}.
\end{proof}

We will show in the next section how $\lambda_0^*$ can be computed
directly using a QR decomposition and without using an iterative
solver.

We can provide some intuition for why this bound is satisfactory. If
we assume that our domain is spanned along a diameter by $O(N)$ nodes,
and that $h \sim N^{-1}$, then we can anticipate $O(N)$ downwind
updates, starting from $\boundary$ and extending to the boundary of
$\calG$ in any direction. Accumulating the error over these nodes, we
can expect the maximum pointwise error between a solution to eq.\@
\ref{eq:eikonal} computed by using \texttt{mp0} and \texttt{mp1} to be
$O(h^2)$, which is dominated by the $O(h)$ discretization error coming
from the linear convergence of the method itself. Hence, using
\texttt{mp0} instead of \texttt{mp1} only to find the parameter
$\lambda$, and then evaluate $\hat{U}$ using $F_1$, should introduce
no significant extra error.


\subsection{Exact solution for \texttt{rhr} and \texttt{mp0} using a
  QR decomposition}\label{ssec:exact-soln}

Since $F_0$ is strictly convex, $\nabla F_0(\lambda) = 0$ is
sufficient for the optimality of $\lambda$, ignoring the constraint
$\lambda \in \Delta^d$. The unconstrained system of nonlinear
equations defined by $\nabla F_0(\lambda) = 0$ can be solved exactly
without an iterative solver. We can compute the solution using the
reduced QR decomposition of $\delP$ and by considering the problem's
geometry (see also figure \ref{fig:f0-exact}, in the appendix). This
is captured in the following theorem. We will discuss how to use this
theorem efficiently in a solver in section\@
\ref{ssec:algorithms-and-skipping}.

\begin{theorem}\label{thm:f0-exact}
  Let $\delP = QR$ be the reduced QR decomposition of $\delP$; i.e.,
  where
  $Q \in \mathbb{R}^{n \times d}, R \in \mathbb{R}^{d \times d},
  Q^\top Q = I_d$, and with $R$ upper triangular. For $s^\theta, h$,
  and $U$ fixed, if
  $\lambda^* = \argmin_{\lambda \in \mathbb{R}^n}
  F_0(\lambda)$, then:
  \begin{align}
    \|p_{\lambda^*}\| &= \sqrt{\frac{p_0^\top {(I - QQ^\top)} p_0}{1 - \norm{R^{-\top} \frac{\delU}{s^\theta h}}^2}},\label{eq:l-star-expression} \\
    \lambda^* &= -R^{-1} \parens{Q^\top p_0 + \|p_{\lambda^*}\| R^{-\top} \frac{\delU}{s^\theta h}},\label{eq:f0-exact-lambda} \\
    \hat{U} &= U_0 + \frac{s^\theta h}{\|p_{\lambda^*}\|} p_0^\top p_{\lambda^*}.\label{eq:f0-exact}
  \end{align}
\end{theorem}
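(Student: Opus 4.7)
The plan is to solve the first-order optimality condition $\nabla F_0(\lambda^*)=0$ using the reduced QR factorization and a geometric decomposition of $p_{\lambda^*}$ into components parallel and orthogonal to the range of $\delP$. From Proposition \ref{prop:F0-grad-and-Hess}, the condition reads
\[
  \delU + s^\theta h\,\delP^\top \nu_{\lambda^*} = 0,
\]
so substituting $\delP = QR$ and multiplying by $R^{-\top}$ gives $Q^\top \nu_{\lambda^*} = -R^{-\top}\delU/(s^\theta h)$. Multiplying both sides by $\|p_{\lambda^*}\|$ yields the key identity
\[
  Q^\top p_{\lambda^*} = -\|p_{\lambda^*}\|\,R^{-\top}\tfrac{\delU}{s^\theta h}. \tag{$\ast$}
\]

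Next I would pin down $\|p_{\lambda^*}\|$. The orthogonal projector $I-QQ^\top$ annihilates the range of $\delP$, so $(I-QQ^\top)p_{\lambda^*}=(I-QQ^\top)(p_0+\delP\lambda^*)=(I-QQ^\top)p_0$. Pythagoras in the orthogonal decomposition $p_{\lambda^*} = QQ^\top p_{\lambda^*} + (I-QQ^\top)p_{\lambda^*}$ gives
\[
  \|p_{\lambda^*}\|^2 = \|Q^\top p_{\lambda^*}\|^2 + p_0^\top(I-QQ^\top)p_0,
\]
and substituting ($\ast$) into the first term on the right produces a linear equation in $\|p_{\lambda^*}\|^2$ that rearranges to eq.~\eqref{eq:l-star-expression}. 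This is the step I expect to be the main obstacle, not because any manipulation is hard but because recognizing the right orthogonal decomposition is the geometric crux of the argument; once seen, everything follows.

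To recover $\lambda^*$ itself, I would write $Q^\top p_{\lambda^*} = Q^\top p_0 + Q^\top \delP \lambda^* = Q^\top p_0 + R\lambda^*$ (since $Q^\top Q = I_d$), equate with ($\ast$), and solve for $\lambda^*$ by inverting $R$; this gives eq.~\eqref{eq:f0-exact-lambda}. Finally, for $\hat{U}$, I would start from $\hat{U}=U_0+\delU^\top\lambda^*+s^\theta h\|p_{\lambda^*}\|$ and use the first-order condition in the form $\delU = -s^\theta h\,\delP^\top\nu_{\lambda^*}$ to rewrite
\[
  \delU^\top \lambda^* = -\frac{s^\theta h}{\|p_{\lambda^*}\|}\,p_{\lambda^*}^\top \delP\lambda^* = -\frac{s^\theta h}{\|p_{\lambda^*}\|}\bigl(\|p_{\lambda^*}\|^2 - p_0^\top p_{\lambda^*}\bigr),
\]
where I used $\delP\lambda^* = p_{\lambda^*} - p_0$. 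The $\|p_{\lambda^*}\|$ terms cancel against the quadrature term and I am left with eq.~\eqref{eq:f0-exact}. All three claims are thus established from the single optimality equation plus the QR decomposition, with the only non-bookkeeping step being the orthogonal splitting used to isolate $\|p_{\lambda^*}\|$.
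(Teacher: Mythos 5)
Your proof is correct and follows essentially the same route as the paper's: both solve $\nabla F_0(\lambda^*)=0$ via the reduced QR factorization to obtain $Q^\top \nu_{\lambda^*} = -R^{-\top}\delU/(s^\theta h)$ and then split $p_{\lambda^*}$ into its component in $\range(\delP)$ and its orthogonal complement, where $(I-QQ^\top)p_{\lambda^*}=(I-QQ^\top)p_0$. Your extraction of $\|p_{\lambda^*}\|$ by a direct Pythagorean identity and of $\lambda^*$ by applying $R^{-1}Q^\top$ to $p_{\lambda^*}=p_0+\delP\lambda^*$ is a slightly more streamlined rendering of the paper's similar-triangles argument and its explicit intermediate formula for $p_{\lambda^*}$, and the final computation of $\hat{U}$ from $\delU=-s^\theta h\,\delP^\top\nu_{\lambda^*}$ is identical to the paper's.
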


\begin{proof}
  See section\@ \ref{sec:exact-soln-proofs}.
\end{proof}

\subsection{Equivalence of the upwind finite difference scheme and
  $F_0$}\label{ssec:equivalence}

If we linearly approximate $U$ near $\hat{p}$, then for
$i = 0, \hdots, n - 1$, we find that $\hat{U}$ satisfies:
\begin{equation}
  \label{eq:finite-differences}
  U_i - \hat{U} = \nabla \hat{U}^\top p_i.
\end{equation}
This finite difference approximation to eq.\@ \ref{eq:eikonal} can be
solved exactly and is a known generalization of the upwind finite
difference scheme used in the fast marching method on an unstructured
mesh~\cite{kimmel1998computing,sethian2000fast}. Computing $\hat{U}$
using this approximation is equivalent to solving:
\begin{equation}
  \hat{U} = \min_{\lambda \in \Delta^n} F_0(\lambda)
\end{equation}
in a sense made precise by the following theorem. As we have pointed
out, this theorem is not new, but we present it here for the sake of
continuity and because it dovetails with our other theorems, providing
context.

\begin{theorem}[Equivalence of upwind finite difference scheme and $F_0$]\label{thm:equivalence}
  Let $\hat{U}$ by the solution of eq.\@
  \ref{eq:finite-differences} and let
  $\hat{U}' = \min_{\lambda \in \R^n} F_0(\lambda)$. Then, $\hat{U}$
  exists if and only if $\|R^{-\top} \delU\| \leq s^\theta h$, and can
  be computed from:
  \begin{equation}
    \label{eq:U-finite-diff}
    \hat{U} = U_i - p_i^\top Q R^{-\top} \delU + \norm{\pmin} \sqrt{{(s^\theta h)}^2 - \norm{R^{-\top} \delU}^2},
  \end{equation}
  where $\pmin = (I - QQ^\top) p_i$ (for all
  $i = 0, \hdots, n - 1$---see figure \ref{fig:f0-exact} in section\@
  \ref{sec:exact-soln-proofs}). Additionally, the following hold:
  \begin{enumerate}
  \item The finite difference solution and line integral solution
    coincide: i.e., $\hat{U} = \hat{U}'$ can be computed from:
    \begin{equation}
      \label{eq:U-from-Ui-exact}
      \hat{U} = U_i + s^\theta h p_i^\top \nu_{\lambda^*},
    \end{equation}
    where $\lambda^* = \argmin_{\lambda \in \R^n} F_0(\lambda)$ and
    $\nu_{\lambda^*} = p_{\lambda^*}/\|p_{\lambda^*}\|$.
  \item The characteristics found by solving the finite difference
    problem and minimizing $F_0$ coincide and are given by
    $[p_{\lambda^*}, \hat{p}] = [p_{\lambda^*}, 0]$.
  \item The approximated characteristic passes through
    $\conv(\set{p_0, \hdots, p_{n-1}})$ if and only if
    $\lambda^* \in \Delta^n$.
  \end{enumerate}
\end{theorem}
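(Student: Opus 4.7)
\medskip

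The plan is to treat the $n$ linear equations of \eqref{eq:finite-differences} together with the implicit eikonal constraint $\|\nabla \hat{U}\| = s^\theta h$ (which is what makes the system determinate and what produces the $s^\theta h$ appearing in \eqref{eq:U-finite-diff}) and solve for $\nabla \hat{U}$ first; then recover $\hat{U}$ from any one of the update equations, and finally show that the vector so obtained is exactly $-s^\theta h\,\nu_{\lambda^*}$ by comparing with the first-order optimality condition for $F_0$ in Proposition~\ref{prop:F0-grad-and-Hess}. Since $d = n - 1$ in this theorem, $\delP \in \mathbb{R}^{n\times(n-1)}$ has a one-dimensional orthogonal complement on which the eikonal norm constraint picks out two candidate gradients, and upwinding selects the appropriate one.

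Subtracting the $i=0$ instance of \eqref{eq:finite-differences} from the $i$-th instance yields $\delU = \delP^\top \nabla \hat{U}$. Substituting the reduced QR factorization $\delP = QR$ gives $Q^\top \nabla \hat{U} = R^{-\top}\delU$, so the component of $\nabla \hat{U}$ in $\operatorname{range}(\delP)$ is determined. Writing $\nabla \hat{U} = QQ^\top \nabla \hat{U} + (I - QQ^\top)\nabla \hat{U}$ and applying Pythagoras with $\|\nabla \hat{U}\|^2 = (s^\theta h)^2$ gives
\begin{equation*}
  \|(I - QQ^\top)\nabla \hat{U}\|^2 = (s^\theta h)^2 - \|R^{-\top}\delU\|^2,
\end{equation*}
which is solvable (with a real square root) precisely under the existence condition $\|R^{-\top}\delU\| \leq s^\theta h$. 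The upwind choice of sign for the one-dimensional orthogonal component forces the corresponding term in $\hat{U} = U_i - p_i^\top \nabla \hat{U}$ to increase $\hat{U}$, which reproduces \eqref{eq:U-finite-diff} after noting that $\pmin = (I - QQ^\top)p_i$ is independent of $i$ up to sign (its squared norm appears in the formula).

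For the equivalence, Proposition~\ref{prop:F0-grad-and-Hess} together with strict convexity of $F_0$ (Lemma~\ref{lemma:dPt-cprojp-dP-pd}) implies that the unconstrained minimizer satisfies $\delU + s^\theta h\,\delP^\top \nu_{\lambda^*} = 0$, so $-s^\theta h\,\nu_{\lambda^*}$ solves $\delP^\top v = \delU$ and has norm $s^\theta h$. By the uniqueness argument of the previous paragraph (with the upwind sign choice matching the direction from $\hat{p}=0$ toward $p_{\lambda^*}$), we get $\nabla \hat{U} = -s^\theta h\,\nu_{\lambda^*}$. Feeding this into $\hat{U} = U_i - p_i^\top \nabla \hat{U}$ produces \eqref{eq:U-from-Ui-exact}, which proves item~(1). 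Item~(2) is then immediate since the characteristic direction at $\hat{p}$ is $-\nabla \hat{U}/\|\nabla \hat{U}\| = \nu_{\lambda^*}$, and the segment from $\hat{p}=0$ in this direction reaching the base is precisely $[p_{\lambda^*}, 0]$. Item~(3) follows because $p_{\lambda^*} \in \operatorname{conv}(\{p_0, \ldots, p_{n-1}\})$ iff the convex coefficients $(\lambda_0^*, \ldots, \lambda_{n-1}^*)$ are nonnegative and sum to one, i.e.\ $\lambda^* \in \Delta^n$.

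The main subtlety is the sign selection on $(I - QQ^\top)\nabla \hat{U}$: both signs solve the system \eqref{eq:finite-differences} together with the norm constraint, and one must justify the upwind choice globally (the characteristic leaves $\hat{p}$ into the simplex interior). This is where the geometric comparison with $\nu_{\lambda^*}$ is indispensable, since the $F_0$ minimization selects the correct root automatically through convexity and positivity of $s^\theta$. The remaining calculations are linear algebra with the QR factors and do not present difficulty.
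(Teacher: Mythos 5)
Your proposal is correct, and it reaches the same formulas as the paper but by a noticeably different route, so a comparison is worthwhile. For eq.~\ref{eq:U-finite-diff}, the paper stacks the $n$ equations into the $n \times n$ system with matrix $\begin{bmatrix} \delP^\top \\ p_i^\top\end{bmatrix}$, writes down its inverse explicitly, and then imposes $\|\nabla U\|^2 = (s^\theta h)^2$ to obtain a quadratic in $\hat{U} - U_i$ whose larger root is the upwind value. You instead solve for $\nabla \hat{U}$ directly by splitting it into its $\range(\delP)$ component (pinned down by $Q^\top \nabla\hat{U} = R^{-\top}\delU$) and its one-dimensional orthogonal component (pinned down up to sign by Pythagoras and the norm constraint), which makes the existence condition $\|R^{-\top}\delU\| \le s^\theta h$ transparent and replaces the quadratic-formula computation by a sign choice; the two sign choices are exactly the two roots of the paper's quadratic, so nothing is lost. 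For item (1), the paper substitutes the closed-form expressions of theorem~\ref{thm:f0-exact} for $\lambda^*$ and $\|p_{\lambda^*}\|$ into $F_0(\lambda^*)$ and then needs a Cauchy--Schwarz equality argument in the weighted inner products $\langle \cdot,\cdot\rangle_{\proj_{\delP}}$, $\langle\cdot,\cdot\rangle_{\proj^\perp_{\delP}}$ to collapse the result to $U_i + s^\theta h\, p_i^\top \nu_{\lambda^*}$; you shortcut this by reading off from the stationarity condition $\delU + s^\theta h\, \delP^\top \nu_{\lambda^*} = 0$ that $-s^\theta h\,\nu_{\lambda^*}$ satisfies both the linear system and the norm constraint with the upwind sign (its orthogonal component is a negative multiple of $\pmin$, since $(I-QQ^\top)p_{\lambda^*} = \pmin$), hence equals $\nabla\hat{U}$ by the uniqueness you have already established. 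Your argument is the more conceptual one and makes theorem~\ref{thm:f0-exact} unnecessary as an input to this proof, at the cost of having to argue the sign selection explicitly --- which you do, and which the paper glosses over when it silently takes the $+$ root. Two trivial remarks: $\pmin = (I-QQ^\top)p_i$ is independent of $i$ exactly, not merely up to sign, because $p_i - p_0 \in \range(Q)$; and your reduction of item (3) to $p_{\lambda^*} \in \conv(\set{p_0,\hdots,p_{n-1}})$ implicitly uses that the segment $[p_{\lambda^*}, 0]$ meets the affine hyperplane $p_0 + \range(\delP)$ only at $p_{\lambda^*}$, which holds by nondegeneracy and is worth a half-sentence.
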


\begin{proof}
  See section\@ \ref{sec:equivalence-proofs}.
\end{proof}

\subsection{Causality}\label{ssec:causality} Dijkstra-like
methods are based on the idea of monotone causality, similar to
Dijkstra's method itself. To compute shortest paths in a network,
Dijkstra's method uses dynamic programming to compute globally optimal
shortest paths using local information~\cite{dijkstra1959note}. In
this way, the distance to each downwind vertex must be greater than
its upwind neighboring vertices. To ensure convergence to the correct
viscosity solution, our scheme must be consistent and
monotone~\cite{crandall1983viscosity}. Our OLIMs using the
\texttt{rhr} quadrature rule inherit the consistency and causality of
the finite difference methods which they are equivalent to if they use
the same 4 (in 2D) or 6 (in 3D) point neighborhoods. Since we consider
many different update neighborhoods involving distinct simplexes, we
provide a simple way of checking whether each simplex is causal.

The causality of an update depends on the underlying simplex and the
problem data. In particular, an update is causal for $F_i$ if:
\begin{equation}
  \hat{U} = F_i(\lambda_i^*) \geq \max_i U_i.
\end{equation}
It is enough to determine whether or not each type of update simplex
admits only causal updates, which relates to whether the simplex is
acute.

We also consider something we refer to here as the ``update gap'': the
difference $\hat{U} - \max_i U_i$. As discussed in Tsitsiklis's
original paper~\cite{tsitsiklis1995efficient}, an alternative to
Dijkstra's algorithm is Dial's algorithm---a bucketed version of
Dijkstra's algorithm which runs in $O(N^n)$ time, where the constant
depends on the bucket size~\cite{dial1969algorithm,kim2001calo}. In
this case, the size of the buckets is determined by the update gap. It
is unclear whether there is any real advantage of a Dial-like solver
(see~\cite{jeong2008fast} for a discussion), although a new numerical
study suggests that there may indeed be some advantage in using a
Dial-like solver~\cite{kim2001calo,gomez2019fast}. Despite this, the
update gap is of fundamental importance and limits the number of nodes
that can be processed in parallel without violating causality.

\begin{theorem}\label{thm:causality}
  For $\nu_i = p_i/\|p_i\|$, an update simplex is causal for $F_0$ if
  and only if $\nu_i^\top \nu_j \geq 0$ for all $i$ and $j$ such that
  $0 \leq i < n$ and $0 \leq j < n$. The update gap is given
  explicitly by:
  \begin{equation}
    \hat{U} - \max_i U_i = s^\theta h \min_{i, j} \frac{\nu_i^\top \nu_j}{\norm{p_i}}.
  \end{equation}
  If we assume that $s$ is Lipschitz continuous, then for $h$ small
  enough, the simplex is also causal for $F_1$, and the term in $F_1$
  which prevents an update from being causal decays with order
  $O(h^2)$.
\end{theorem}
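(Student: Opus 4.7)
The plan is to extract the causality criterion and the gap formula directly from the optimality conditions for $F_0$ and $F_1$ established in propositions \ref{prop:F0-grad-and-Hess}--\ref{prop:F1-grad-and-Hess}, combined with the identity $\hat{U} - U_i = s^\theta h\, p_i^\top \nu_{\lambda^*}$ from theorem \ref{thm:equivalence}. Since both $F_0$ and $F_1$ have been shown to be strictly convex under mild hypotheses (lemmas \ref{lemma:dPt-cprojp-dP-pd}--\ref{lemma:F-strictly-convex}), the stationarity condition $\nabla F = 0$ characterizes the minimizer uniquely, and the resulting identity for $\hat{U} - U_i$ reduces causality to a geometric check on the vertex normals $\nu_i$.

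For $F_0$, theorem \ref{thm:equivalence} yields $\hat{U} - U_i = s^\theta h \|p_i\|\, \nu_i^\top \nu_{\lambda^*}$ for every $i$. Sufficiency of the condition $\nu_i^\top \nu_j \geq 0$ is immediate: since $p_{\lambda^*}$ is a nonnegative combination of the $p_j$'s, we get $p_i^\top p_{\lambda^*} = \sum_j \lambda_j^* \|p_i\|\|p_j\|\,\nu_i^\top \nu_j \geq 0$. For necessity, I would exhibit data $\delU$ that pushes the unconstrained minimizer onto a vertex $e_j$ of $\Delta^d$ (achievable via the closed form \ref{eq:f0-exact-lambda}), producing $\hat{U} - U_i = s^\theta h \|p_i\|\, \nu_i^\top \nu_j$, which must be nonnegative for every $i$ and $j$. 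The explicit gap formula then follows by minimizing $\hat{U} - U_i = s^\theta h \|p_i\|\, \nu_i^\top \nu_{\lambda^*}$ over $i$ and substituting the worst-case extremal orientation for $\nu_{\lambda^*}$, which sits at a vertex of $\Delta^d$; tracking the normalization conventions carefully matches the stated $\min_{i,j}$ expression.

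For $F_1$, I would parallel this derivation starting from the stationarity condition $\nabla F_1(\lambda_1^*) = 0$ of proposition \ref{prop:F1-grad-and-Hess}, which reads
\[
  \delU = -\theta h \|p_{\lambda_1^*}\|\, \dels - s^\theta_{\lambda_1^*} h\, \delP^\top \nu_{\lambda_1^*}.
\]
Substituting into $\hat{U} - U_i = \delU^\top \lambda_1^* - \delU_i + s^\theta_{\lambda_1^*} h \|p_{\lambda_1^*}\|$ and applying the telescoping identity $\delP \lambda_1^* = p_{\lambda_1^*} - p_0$ that was used implicitly for $F_0$, the algebra collapses to
\[
  \hat{U} - U_i = s^\theta_{\lambda_1^*} h\, p_i^\top \nu_{\lambda_1^*} + \theta h \|p_{\lambda_1^*}\|\, \bigl(\dels_i - \dels^\top \lambda_1^*\bigr).
\]
The first summand is the $F_0$ analogue and is nonnegative under the causality condition; the second summand is the ``term that prevents causality'' referred to in the statement. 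Lipschitz continuity of $s$ with an $h$-independent constant yields $|\dels_i|, |\dels^\top \lambda_1^*| = O(h)$ (the grid points are at physical distance $O(h)$), and in the scaled coordinates $\|p_{\lambda_1^*}\| = O(1)$, so the bracketed correction is $O(h^2)$. For $h$ small enough, the dominant $F_0$-term therefore preserves the sign.

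The main obstacle I anticipate is the borderline case in which the simplex is only right-angled ($\nu_i^\top \nu_j = 0$ for some pair), so that the leading $F_0$-term vanishes and the $O(h^2)$ correction could in principle flip the sign of the gap. The cleanest way around this is to restrict attention to strictly acute simplexes, where the $F_0$-term is bounded below by a positive quantity uniform in $h$, after which the $O(h^2)$ correction is unambiguously subdominant. A secondary subtlety lies in pinning down the precise algebraic form of the stated gap expression: this amounts to identifying which vertex of $\Delta^d$ realizes the worst-case $\nu_{\lambda^*}$, and becomes a routine calculation once the identity $\hat{U} - U_i = s^\theta h \|p_i\|\, \nu_i^\top \nu_{\lambda^*}$ is in hand.
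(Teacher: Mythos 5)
Your proposal is correct, and for the first half (causality and the gap formula for $F_0$) it is essentially the paper's argument: both reduce to the identity $\hat{U} - U_i = s^\theta h\, p_i^\top \nu_{\lambda^*}$ from theorem \ref{thm:equivalence}, observe that $p_i^\top p_{\lambda^*} \geq 0$ whenever all $\nu_i^\top\nu_j \geq 0$ because $p_{\lambda^*}$ is a convex combination of the $p_j$, and obtain the extremal gap by pushing $\lambda^*$ to a vertex of $\Delta^d$ (the paper asserts the vertex extremality via a linear-programming remark; your necessity argument of constructing $\delU$ realizing $\lambda^* = e_j$ is the same idea made explicit, and your version of the identity, with $\|p_i\|$ appearing as a factor rather than a divisor, is in fact the one consistent with the table of update gaps in section \ref{ssec:update-gaps}). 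For $F_1$ your route genuinely differs from the paper's. The paper writes $F_1(\lambda_1^*) = F_0(\lambda_1^*) + \theta(s_0 + \dels^\top\lambda_1^* - \overline{s})h\|p_{\lambda_1^*}\|$, Taylor-expands $F_0(\lambda_1^*)$ about $\lambda_0^*$, and invokes theorem \ref{thm:mp0-newton} to control $\|\lambda_1^* - \lambda_0^*\| = O(h)$, so that causality of $F_1$ is inherited from causality of $F_0$ at $\lambda_0^*$ up to an $O(h^2)$ perturbation. You instead substitute the stationarity condition $\nabla F_1(\lambda_1^*) = 0$ directly into $\hat{U} - U_i$ and obtain the exact decomposition $\hat{U} - U_i = s^\theta_{\lambda_1^*} h\, p_i^\top\nu_{\lambda_1^*} + \theta h \|p_{\lambda_1^*}\|(\dels_i - \dels^\top\lambda_1^*)$, which I have verified. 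This buys you two things: it avoids any dependence on the Newton convergence machinery of theorem \ref{thm:mp0-newton}, needing only $\lambda_1^* \in \Delta^d$ and the Lipschitz bound $\|\dels\| = O(h)$, and it exhibits the ``term preventing causality'' as an explicit closed-form quantity rather than a Taylor remainder. The one caveat you correctly flag---that for a merely right-angled simplex the leading term can vanish and the $O(h^2)$ correction could flip the sign, so strict acuteness (a gap bounded below by a positive multiple of $h$) is needed---is present but unstated in the paper's own proof, whose final sentence makes the same implicit assumption; your treatment of this point is the more careful of the two.
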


\begin{proof}
  See section\@ \ref{sec:causality-proofs}.
\end{proof}

In section\@ \ref{ssec:update-gaps} we present a table of update gaps
for all update simplices considered in this paper.

The fact that our methods are causal for all practical problem sizes
follows from the fact that the term preventing causality decays
rapidly---see eq.\@ \ref{eq:F1-in-terms-of-F0}. This can be seen
easily by rewriting $F_1(\lambda)$ as $F_0(\lambda)$ plus a small
perturbation (which is $O(h^2)$) and using the Lipschitz continuity of
$s$.

\subsection{Local factoring}

Near rarefaction fans, for example if $D$ is a point source or the
domain contains obstacles with corners, the rate of convergence of the
eikonal equation is diminished. For the eikonal equation with point
source data and constant slowness, this degrades the rate of
convergence to $O(h \log
h^{-1})$~\cite{qi2018corner,zhao2005fast}. Fast sweeping methods for
the factored eikonal equations have been
developed~\cite{fomel2009fast,luo2012fast}; likewise, fast marching
methods have been developed, and have also been used in the context of
travel time tomography~\cite{qi2018corner,treister2016fast}.

In this section, we show how the ordered line integral method can be
easily adapted to additive factoring, and provide numerical tests that
show that it recovers the expected linear rate of convergence for
factored point source problems. Our focus is locally factored point
sources, but this approach can be applied to the globally factored
equation and other types of rarefaction fans occuring at corners or
discontinuities~\cite{qi2018corner}.

\begin{figure}
  \centering
  \includegraphics[width=\linewidth]{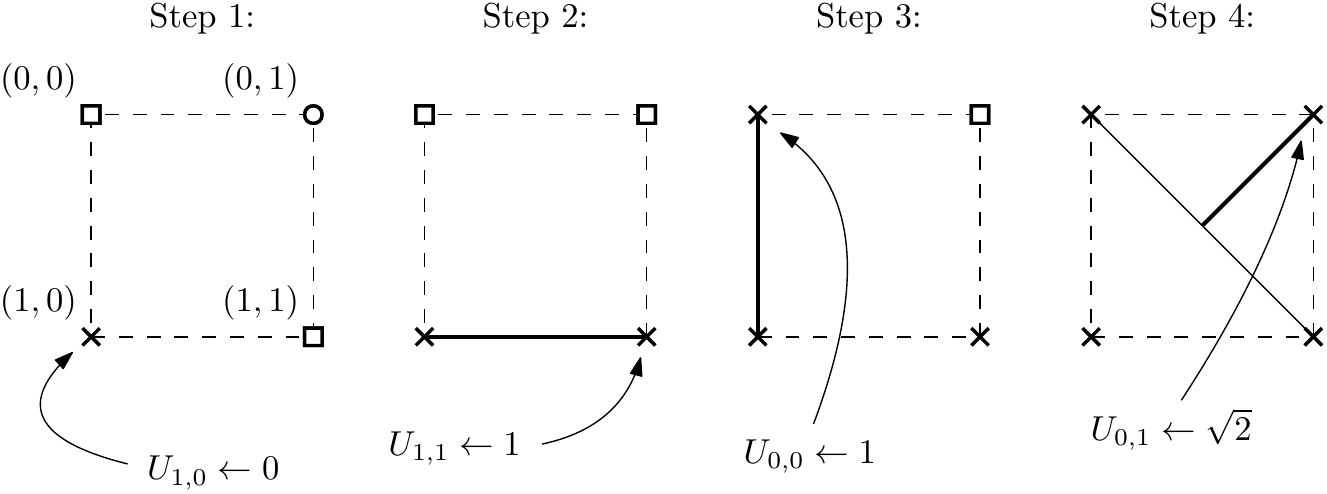}
  \caption{\emph{An example of running \texttt{olim4\_rhr} with local
      factoring for three steps.} Note that \texttt{olim4\_rhr} is
    equivalent to the standard 2D fast marching method. We assume
    $s \equiv 1$. Initially, $p^\circ = p_{1, 0}$ is the only node in
    \texttt{bd} and is factored. The first two steps proceed exactly
    the same as for the unfactored method, since the steps between the
    source nodes and the updated nodes lie along characteristics. In
    the third step, the unfactored method would set
    $U_{0,1} \gets 1 + \nicefrac{\sqrt{2}}{2}$. However, for the
    factored solver, we find that
    $U_{0,1} \gets \nicefrac{\sqrt{2}}{2} + \nicefrac{\sqrt{2}}{2} =
    \sqrt{2}$.}\label{fig:factoring-example}
\end{figure}

Let $x^\circ \in \Omega$ be the location of a point source so that
$\boundary = \set{x^\circ}$, define $p^\circ$ to be the image of
$x^\circ$ under the same transformation that takes $\hat{x}$ to
$\hat{p}$, and let $s^\circ = s(x^\circ)$. The additive factorization
of $U$ around $x^\circ$ is~\cite{luo2012fast,qi2018corner}:
\begin{align}
  U(x) = T(x) + \tau(x), \qquad \text{where} \qquad T(x) = s^\circ \norm{x - x^\circ},
\end{align}
i.e.\ $u_\lambda = T_\lambda + \tau_\lambda$ where
$T_\lambda = s^\circ h \|p_\lambda - p^\circ\|$. Our original definition of
$F_i$ was such that $\hat{U} = F_i(\lambda^*)$. We will
define $G_i$ analogously. Letting
$\tau_\lambda = \tau_0 + \deltau^\top \lambda$, where $\tau_i$ and
$T_i$ are the values of $\tau$ and $T$ at $p_i$ for each $i$, we
define:
\begin{align}
  \label{eq:Gi}
  G_0(\lambda) &= \tau_\lambda + T_\lambda + s^\theta h \|p_\lambda\|, \\
  G_1(\lambda) &= \tau_\lambda + T_\lambda + s^\theta_\lambda h \|p_\lambda\|.
\end{align}
Like with $F_0$ and $F_1$, the only difference between $G_0$ and $G_1$
is between the terms containing $s^\theta$ and $s^\theta_\lambda$. We
do not explicitly refer to them in the rest of this paper, but the
cost functions $G_{\texttt{rhr}}$, $G_{\texttt{mp0}}$, and
$G_{\texttt{mp1}}$ are defined analogously to $\Frhr$, $\Fmpzero$, and
$\Fmpone$ from the $\theta$-rules $G_0$ and $G_1$.

To solve the factored eikonal equation, we choose a factoring radius
$\rfac$, replacing $F_i$ with $G_i$ in eq.\@
\ref{eq:constrained-minimization} for nodes which lie within a
distance $\rfac$ of $x^\circ$. For constant slowness, the effect of
this is to solve eq.\@ \ref{eq:eikonal} exactly inside of the
locally factored region. For clarity, this is depicted in figure
\ref{fig:factoring-example}. Algorithm \ref{alg:top-down} can be
applied to solve eq.\@ \ref{eq:constrained-minimization} for
factored nodes. The gradient and Hessian of $G_i$ are simple
modifications of the gradient and Hessian for $F_i$.

\begin{lemma}
  The gradient and Hessian of $G_i$ for $i = 0, 1$ are given
  by:
  \begin{align}
    \label{eq:G-grad-hess}
    \nabla G_i(\lambda) &= \nabla F_i(\lambda) - \deltau + \frac{s^\circ h}{\|p_\lambda - p^\circ\|} \delP^\top {(p_\lambda - p^\circ)}, \\
    \nabla^2 G_i(\lambda) &= \nabla^2 F_i(\lambda) + \frac{s^\circ h}{\|p_\lambda - p^\circ\|} \delP^\top \proj^\perp_{p_\lambda - p^\circ} \delP.
  \end{align}
\end{lemma}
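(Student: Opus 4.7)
The plan is to compute $\nabla G_i$ and $\nabla^2 G_i$ by direct differentiation, reusing the identities already established in the proofs of Propositions \ref{prop:F0-grad-and-Hess} and \ref{prop:F1-grad-and-Hess}. The cost function $G_i$ differs from $F_i$ only through the replacement of the linear interpolation of $U$ by the additive factorization $\tau_\lambda + T_\lambda$, where $\tau_\lambda = \tau_0 + \deltau^\top \lambda$ is linear and $T_\lambda = s^\circ h \|p_\lambda - p^\circ\|$ is the exact point-source contribution. The linear part contributes $\deltau$ to the gradient and nothing to the Hessian, so the bookkeeping between $\nabla F_i$ and $\nabla G_i$ reduces to tracking one linear offset; the only nontrivial new contribution comes from $T_\lambda$.

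To differentiate $T_\lambda$, I would set $q(\lambda) = p_\lambda - p^\circ$, note that the Jacobian $\partial q/\partial \lambda$ is simply $\delP$ (since $p_\lambda - p^\circ = (p_0 - p^\circ) + \delP \lambda$), and invoke the two elementary identities that already appear in the propositions cited above:
\begin{equation*}
  \nabla_\lambda \|q\| \;=\; \frac{\delP^\top q}{\|q\|}, \qquad \nabla^2_\lambda \|q\| \;=\; \frac{1}{\|q\|}\, \delP^\top \proj^\perp_q \delP.
\end{equation*}
Multiplying by $s^\circ h$ gives precisely the extra terms $\frac{s^\circ h}{\|p_\lambda - p^\circ\|}\, \delP^\top (p_\lambda - p^\circ)$ in the gradient and $\frac{s^\circ h}{\|p_\lambda - p^\circ\|}\, \delP^\top \proj^\perp_{p_\lambda - p^\circ} \delP$ in the Hessian, matching the lemma's second summands exactly.

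To finish, I would assemble the pieces. The integral quadrature terms $s^\theta h \|p_\lambda\|$ (for $G_0$) and $s^\theta_\lambda h \|p_\lambda\|$ (for $G_1$) are identical to those appearing in $F_0$ and $F_1$, so their derivatives contribute exactly the corresponding pieces of $\nabla F_i$ and $\nabla^2 F_i$ as computed in Propositions \ref{prop:F0-grad-and-Hess}--\ref{prop:F1-grad-and-Hess}. The only distinction between $i=0$ and $i=1$ is then the anticommutator term $\{\delP^\top \nu_\lambda, \theta h \dels\}$ appearing in $\nabla^2 F_1$ (eq.\ \ref{eq:hess-F1}); since this lives entirely inside $\nabla^2 F_i$, both cases of the lemma collapse to the same additive correction on top of $\nabla F_i$ and $\nabla^2 F_i$.

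I do not anticipate any real obstacle: the computation is pure chain rule and every relevant identity has already appeared earlier in the paper. If anything, the only care needed is in tracking the convention for the linear-$U$ versus linear-$\tau$ term inside $\nabla F_i$ on the right-hand side of the gradient formula, which is what the $-\deltau$ correction accounts for.
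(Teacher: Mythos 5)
The paper states this lemma without proof, so there is no argument of the authors' to compare against; your plan---differentiate $\tau_\lambda + T_\lambda$ term by term and reuse the identities from Propositions \ref{prop:F0-grad-and-Hess} and \ref{prop:F1-grad-and-Hess} for the quadrature term---is clearly the intended route. It fully establishes the Hessian formula: the linear terms $\tau_\lambda$ and $U_\lambda$ contribute nothing, and with $q = p_\lambda - p^\circ$ (whose Jacobian in $\lambda$ is $\delP$) the chain rule gives $\nabla^2_\lambda T_\lambda = \frac{s^\circ h}{\norm{q}} \delP^\top \proj^\perp_{q} \delP$, which is exactly the claimed correction to $\nabla^2 F_i$.

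The gap is in the gradient, at precisely the step you defer (``the only care needed is in tracking the convention for the linear-$U$ versus linear-$\tau$ term \dots which is what the $-\deltau$ correction accounts for''). You assert that $-\deltau$ accounts for this bookkeeping but never carry it out, and if you do, it does not come out to $-\deltau$. Writing $G_i(\lambda) = F_i(\lambda) - U_\lambda + \tau_\lambda + T_\lambda$ with $U_\lambda = U_0 + \delU^\top \lambda$ and $U_i = \tau_i + T_i$, your own computation yields
\begin{equation*}
  \nabla G_i(\lambda) = \nabla F_i(\lambda) - \delU + \deltau + \frac{s^\circ h}{\norm{p_\lambda - p^\circ}} \delP^\top (p_\lambda - p^\circ),
\end{equation*}
and $-\delU + \deltau$ is the vector with components $-(T_i - T_0)$, not $-\deltau$. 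The stated formula therefore holds only under the identification $T_i - T_0 = \tau_i - \tau_0$, which is not generally true; alternatively, if $\nabla F_i$ on the right-hand side is meant to be evaluated with the nodal data $\tau_i$ in place of $U_i$, the linear correction vanishes entirely rather than equaling $-\deltau$. In other words, the one step you wave at is the one that determines whether the statement as printed is consistent, and under the literal reading it is not. To close the proof you must fix the meaning of $\delU$ inside $\nabla F_i$ and derive the resulting linear offset explicitly, rather than asserting that it equals $-\deltau$; as it stands your argument proves the Hessian identity but only a corrected version of the gradient identity.
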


\section{Implementation of the ordered line integral
  method}\label{sec:implementation}

\begin{figure}
  \centering
  \includegraphics[width=0.95\linewidth]{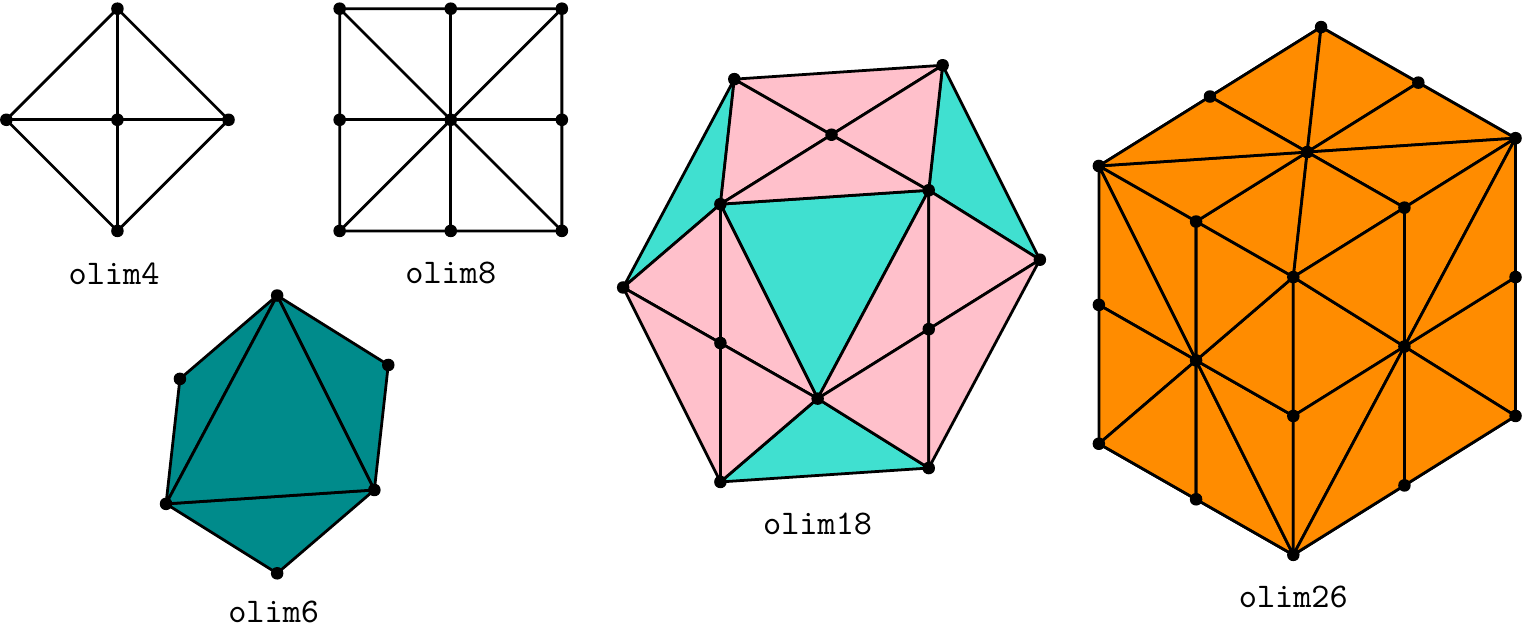}
  \caption{\emph{Neighborhoods for the \emph{top-down} family of algorithms.}
    Algorithms \texttt{olim4} and \texttt{olim8} are 2D solvers and
    the rest are 3D solvers. The color coding of tetrahedron updates
    is the same for this figure and figure \ref{fig:octant-numbering}
    below.}\label{fig:neighborhoods}%
  \includegraphics[width=0.95\linewidth]{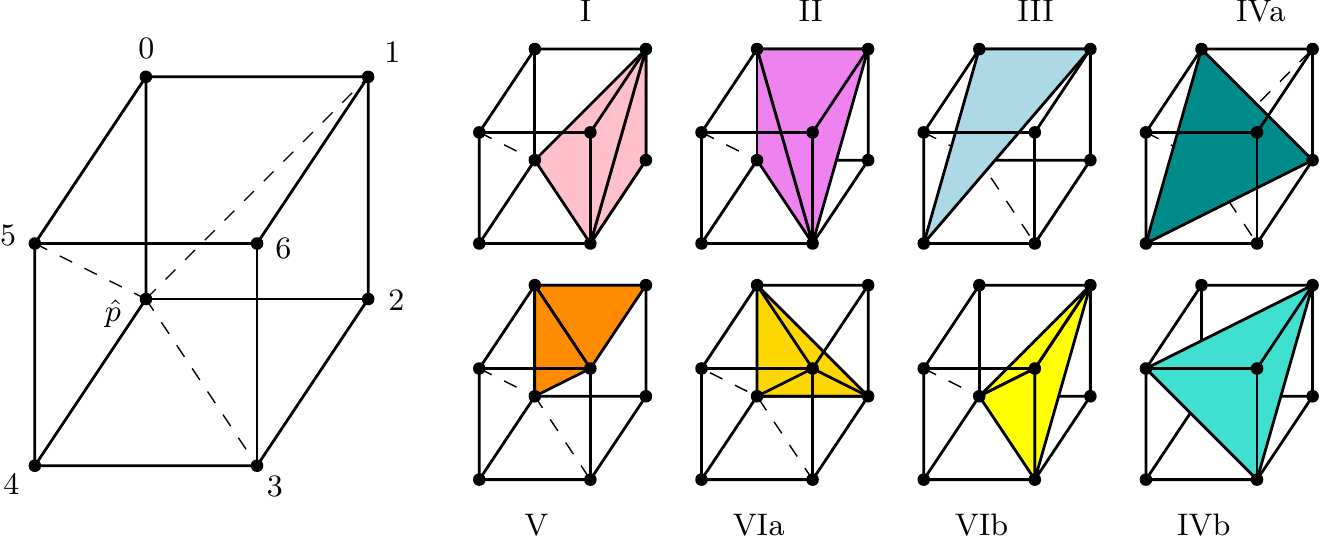}
  \caption{\emph{Numbering scheme for update groups for the \emph{top-down} solvers.} In this
    diagram, $\hat{p}$ is being updated. The diagonally opposite node
    is the sixth (last) node, with the other six nodes numbered 0--5
    cyclically.}\label{fig:octant-numbering}
  { \footnotesize
    \vspace{1em}
    \begin{tabular}{c|cccccc|cccccc|cccccc|cc}
      0 & $\groupmarker$ & & & & $\groupmarker$ & $\groupmarker$ & $\groupmarker$ & & & $\groupmarker$ & & $\groupmarker$ & $\groupmarker$ & & $\groupmarker$ & & & $\groupmarker$ & $\groupmarker$ & \\
      1 & $\groupmarker$ & $\groupmarker$ & & & & $\groupmarker$ & $\groupmarker$ & $\groupmarker$ & & & $\groupmarker$ & & $\groupmarker$ & $\groupmarker$ & & $\groupmarker$ & & & & $\groupmarker$ \\
      2 & $\groupmarker$ & $\groupmarker$ & $\groupmarker$ & & & & & $\groupmarker$ & $\groupmarker$ & & & $\groupmarker$ & & $\groupmarker$ & $\groupmarker$ & & $\groupmarker$ & & $\groupmarker$ & \\
      3 & & $\groupmarker$ & $\groupmarker$ & $\groupmarker$ & & & $\groupmarker$ & & $\groupmarker$ & $\groupmarker$ & & & & & $\groupmarker$ & $\groupmarker$ & & $\groupmarker$ & & $\groupmarker$ \\
      4 & & & $\groupmarker$ & $\groupmarker$ & $\groupmarker$ & & & $\groupmarker$ & & $\groupmarker$ & $\groupmarker$ & & $\groupmarker$ & & & $\groupmarker$ & $\groupmarker$ & & $\groupmarker$ & \\
      5 & & & & $\groupmarker$ & $\groupmarker$ & $\groupmarker$ & & & $\groupmarker$ & & $\groupmarker$ & $\groupmarker$ & & $\groupmarker$ & & & $\groupmarker$ & $\groupmarker$ & & $\groupmarker$ \\
      \multicolumn{1}{c}{} & \multicolumn{6}{c}{I} & \multicolumn{6}{c}{II} & \multicolumn{6}{c}{III} & \multicolumn{2}{c}{IV}
    \end{tabular} \\
    \vspace{1em}
    \begin{tabular}{c|cccccc|cccccc|ccc}
      0 & $\groupmarker$ & & & & & $\groupmarker$ & $\groupmarker$ & & & & $\groupmarker$ & & $\groupmarker$ & & \\
      1 & $\groupmarker$ & $\groupmarker$ & & & & & & $\groupmarker$ & & & & $\groupmarker$ & & $\groupmarker$ & \\
      2 & & $\groupmarker$ & $\groupmarker$ & & & & $\groupmarker$ & & $\groupmarker$ & & & & & & $\groupmarker$ \\
      3 & & & $\groupmarker$ & $\groupmarker$ & & & & $\groupmarker$ & & $\groupmarker$ & & & $\groupmarker$ & & \\
      4 & & & & $\groupmarker$ & $\groupmarker$ & & & & $\groupmarker$ & & $\groupmarker$ & & & $\groupmarker$ & \\
      5 & & & & & $\groupmarker$ & $\groupmarker$ & & & & $\groupmarker$ & & $\groupmarker$ & & & $\groupmarker$ \\
      6 & $\groupmarker$ & $\groupmarker$ & $\groupmarker$ & $\groupmarker$ & $\groupmarker$ & $\groupmarker$ & $\groupmarker$ & $\groupmarker$ & $\groupmarker$ & $\groupmarker$ & $\groupmarker$ & $\groupmarker$ & $\groupmarker$ & $\groupmarker$ & $\groupmarker$ \\
      \multicolumn{1}{c}{} & \multicolumn{6}{c}{V} & \multicolumn{6}{c}{VI} & \multicolumn{3}{c}{VII}
    \end{tabular}%
  }
  \caption{\emph{Tables of update groups.} These tables should be
    scanned columnwise: each column of dots selects a different
    tetrahedron. Tetrahedra (0, 1, 2), (2, 3, 4), and (4, 5, 0) in
    group I and all tetrahedra in group VII are degenerate and can be
    omitted.}\label{fig:tetrahedra-groups}
\end{figure}

In this section, we describe our \emph{bottom-up} and \emph{top-down}
algorithms (see fig.\@ \ref{fig:classification}). We focus on 3D
solvers, since in 2D the distinction between the two is less
important. Each algorithm reduces the number of updates that are done
without degrading solution accuracy by using an efficient enumeration
or search for update simplexes. The primary difference between the two
algorithms is the ordering of the updates' dimensions: \emph{top-down}
proceeds from $d = n - 1$ down to $d = 0$, skipping lower-dimensional
updates when possible, and \emph{bottom-up} proceeds from $d = 0$ up
to $d = n - 1$, skipping higher-dimensional updates when it can.

\subsection{Simplex enumeration for the \emph{top-down}
  algorithm}\label{ssec:simplex-enumeration}

When a node is first removed from \texttt{front} and has just become
\texttt{valid} (item\@ \ref{enum:get-node} in algorithm\@
\ref{alg:dijkstra-like}), an isotropic solver must do updates
involving, at the very least, the node's $2n$ nearest neighbors. We
can use larger neighborhoods to improve the accuracy of the
result. Doing so does not necessarily improve the order of convergence
of the solver, but can significantly improve the error constant of the
solution. For all of the solvers considered in this paper, in 3D, we
only consider neighborhoods with at most 26 neighbors. See fig.\@
\ref{fig:neighborhoods} for neighborhoods for the \emph{top-down}
solver.

For the \emph{top-down} solver, we simplify things by only doing
updates which have $p_i = \pnew$ for some $i$. To iterate over all
update simplexes, by symmetry, we enumerate all simplexes in a single
octant. This means enumerating, e.g., ${7 \choose 3} = 35$
tetrahedra. Some choices lead to degenerate tetrahedra, so the number
of nondegenerate update tetrahedra is fewer than 35 per octant. This
makes it reasonable to write out the update procedure as straight-line
code.

We enumerate the tetrahedra in a type of ``shift-order'' (see, e.g.,
\cite{arndt2010matters})---that is, we start with an unseen bit
pattern, and group this pattern together with all of its shifts (with
rotation). This groups the tetrahedra into sets that are rotationally
symmetric about the diagonal of the octant. In our implementation, we
conditionally compile different groups so that no unnecessary
branching is done. This is done using C++
templates~\cite{stroustrup2013c++}. Example stencils for the versions
of \texttt{olim6}, \texttt{olim18}, and \texttt{olim26} that are used
for our numerical test are shown in figure
\ref{fig:neighborhoods}. The tetrahedron groups are shown in figures
\ref{fig:octant-numbering} and \ref{fig:tetrahedra-groups}.

\subsection{Update gaps for tetrahedron
  groups}\label{ssec:update-gaps}

If we apply theorem \ref{thm:causality} to the tetrahedron groups
enumerated in figures \ref{fig:octant-numbering} and
\ref{fig:tetrahedra-groups}, we get the following update gaps,
enumerated in the same order as fig.\@ \ref{fig:octant-numbering}
(ignoring the $s^\theta h$ factor): \vspace{0.5em}
\begin{center}
  \begin{tabular}{lc|lc|lc|lc}
    Group I & $\nicefrac{1}{\sqrt{2}}$ & Group II & $\nicefrac{1}{\sqrt{2}}$ & Group III & $\nicefrac{1}{\sqrt{2}}$ & Group IVa & 0 \\
    \midrule
    Group V & $\nicefrac{1}{\sqrt{3}}$ & Group VIa & 0 & Group VIb & $\boldsymbol{\nicefrac{2}{\sqrt{3}}}$ & Group IVb & $\nicefrac{1}{\sqrt{2}}$
  \end{tabular}
\end{center}
\vspace{0.5em} The update gap is first explored in Tsitsiklis's
original paper~\cite{tsitsiklis1995efficient}; in this work, the fact
that Group IVa has no update gap and that the update gap of Group V is
$1/\sqrt{3}$ is noted and an $O(N^n)$ algorithm based on Dial's
algorithm is presented using Group V for the update tetrahedra. This
same observation is made in a more recent paper about a method based
on Dial's algorithm~\cite{kim2001calo}. A method based on a
combination of tetrahedra groups will have an update gap that is the
minimum of each of the individual groups' gaps. We note here that a
solver based on a combination of Groups I and VIb has a larger update
gap than a solver based on Group V. This should have a positive impact
on the performance of any parallel Dijkstra-like method.

\subsection{The search procedure used by the \emph{bottom-up}
  algorithm}\label{ssec:bottom-up-search}

\begin{figure}[t]
  \centering
  \includegraphics[width=0.85\linewidth]{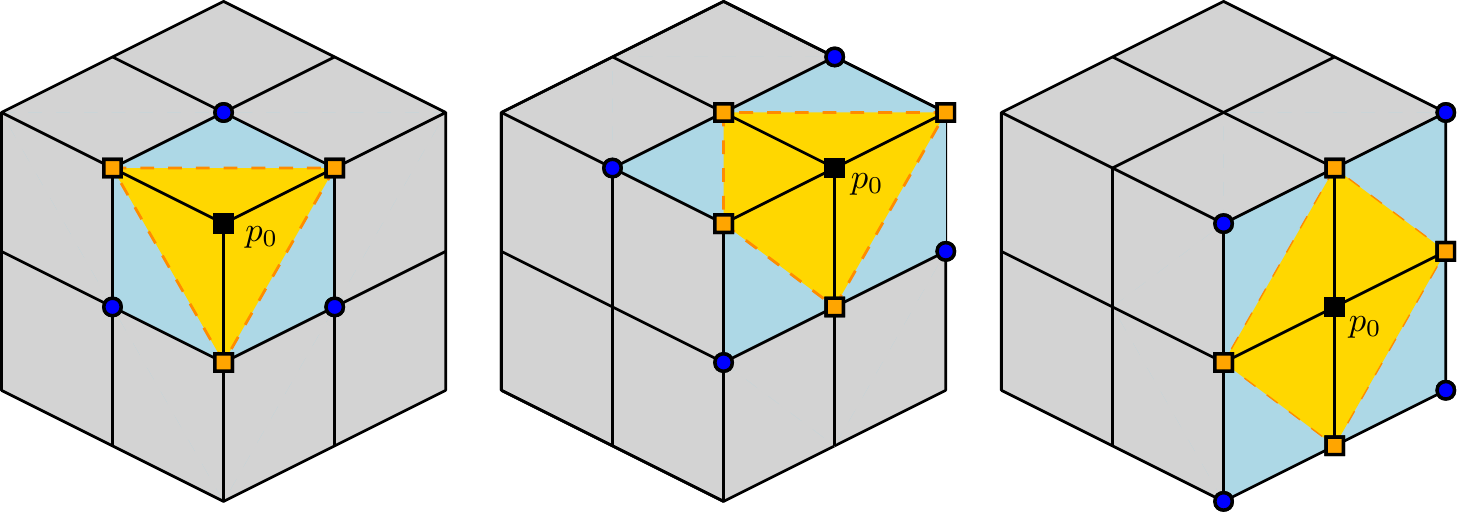}
  \caption{\emph{The three types of neighborhoods for the
      \emph{bottom-up} algorithm.} The yellow and blue regions
    indicate where triangle and tetrahedron updates may be performed,
    respectively. For instance, with $p_0$ the minimizing line update
    vertex, candiates for $p_1$ consist of the yellow nodes: triangle
    updates involving these candidates and $p_0$ will be
    performed. Once a yellow node ($p_1$) has been selected,
    tetrahedron updates involving the neighboring blue nodes
    (candidates for $p_2$) will be performed. Note that the updates
    performed correspond roughly to a combination of groups I, V, VIa,
    and VIb.}\label{fig:hu-neighborhoods}
\end{figure}

\begin{figure}
  \centering
  \includegraphics[width=0.4\linewidth]{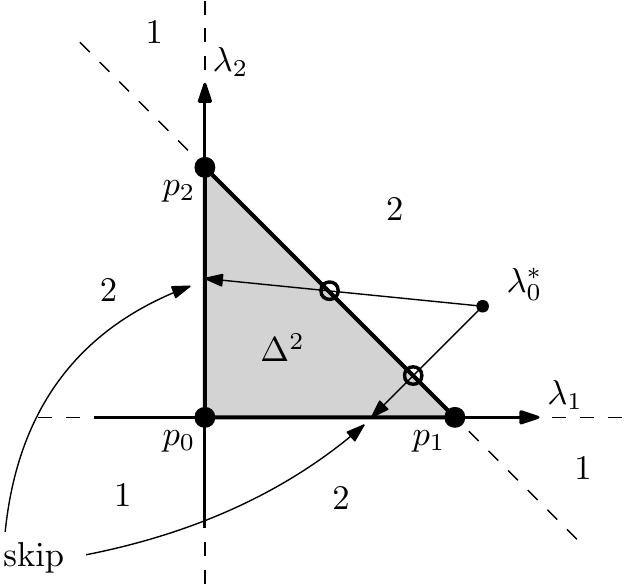}
  \caption{\emph{Skipping lower-dimensional updates when solving the
      unconstrained minimization problem.} For $d = 2$, if
    $\lambda_0^* \in \Delta^2$, all three triangle updates can be
    skipped. On the other hand, when minimizing $F_0$ using theorem
    \ref{thm:f0-exact}, if $\lambda^*_0 \notin \Delta^2$ and depending
    on where $\lambda_0^*$ lies, it is possible to skip one or two
    triangle updates. In this case, we label the different regions by
    the number of updates that it is possible to skip: $\lambda^*$
    here lies in ``zone 2'', since it is possible to skip the two
    triangle updates on the opposite side of $\Delta^2$. Along the
    same lines, if $\lambda^*$ were to lie in ``zone 1'', two triangle
    updates would be ``visible'', and it would only be possible to
    skip one update.}\label{fig:skip-zones}
\end{figure}

Another approach is to use local characteristic information obtained
by performing lower-dimensional updates to help us avoid performing
unnecessary higher-dimensional updates. Intuitively, if we find the
minimum line update ($d = 0$), then we can avoid triangle updates
($d = 1$) that don't include the minimizing line update. Since we only
perform updates for $d > 0$ that include the newly \texttt{valid} node
$\pnew$, we can start our search with $d = 1$.

After doing an update of dimension $d$, we find neighboring updates of
dimension $d + 1$. For $n = 3$ and $d = 1$, for each $p_0$, we find
points $p_1$ that satisfy $\|p_0 - p_1\|_1 \leq 1$. For $d = 2$, for
each pair $(p_0, p_1)$, we find points $p_2$ that satisfy
$\|p_0 - p_2\|_1 \leq 2$ and $\|p_1 - p_2\|_1 \leq 2$
simultaneously. In practice, we only find these neighbors once and
precompute an array of indices to be used later. The neighborhoods
computed in this way are shown in fig.\@ \ref{fig:hu-neighborhoods}.

\subsection{Minimization algorithms and skipping
  updates}\label{ssec:algorithms-and-skipping}

When $F_i = F_0$, we can use theorem \ref{thm:f0-exact} or
\ref{thm:equivalence} to compute $\lambda_0^*$. If $F_i = F_1$, we
need to use an algorithm that can solve the constrained optimization
problem defined by eq.\@ \ref{eq:constrained-minimization}. Our
approach has been to use sequential quadratic programming (SQP),
although there are many other
options~\cite{bertsekas1999nonlinear,nocedal2006numerical}. It is
possible to skip some updates; and, indeed, the performance of our
algorithms comes about largely because of this happening.

We skip updates in three different ways. The first two approaches are
used by the \emph{top-down} algorithms, and the third by the
\emph{bottom-up} algorithms.

\paragraph{Top-down constrained skipping.} When computing an update
using a constrained solver, we can rule out all incident
lower-dimensional updates, since we have computed the global
constrained optimum on $\Delta^d$.

\paragraph{Top-down unconstrained skipping.} If we do an update using
an unconstrained solver, then depending on where the optimum
$\lambda_0^*$ lies, we can skip some or all lower-dimensional
updates. The idea is simple and is best depicted visually, see fig.\@
\ref{fig:skip-zones}.

\paragraph{Bottom-up KKT skipping.} We can also skip
higher-dimensional updates. For example, if we do the three triangle
updates on the boundary of a tetrahedron update, we can use the
Karush-Kuhn-Tucker necessary conditions for optimality of a
constrained optimization problem~\cite{nocedal2006numerical} to
determine if the minimizer on the boundary is also a global minimizer
for the constrained minimization problem given by eq.\@
\ref{eq:constrained-minimization}. See appendix\@
\ref{sec:kkt-skipping}. We note that a modified version of this
strategy for skipping updates was used in the work on computing the
quasipotential for nongradient SDEs in 3D~\cite{yang2019computing}.

\subsection{The \emph{bottom-up} and \emph{top-down} algorithms}\label{ssec:top-down-and-bottom-up}

\begin{algorithm}[t]
  \caption{The \emph{top-down} algorithm.}\label{alg:top-down}
  \begin{enumerate}[nolistsep]
  \item Set $\hat{U} \gets \infty$.
  \item Initialize $\calV_d$ according to eq.\@ \ref{eq:calU} for
    each $d = 0, \hdots, n - 1$.
  \item For $d = n - 1$ down to $0$:
    \begin{enumerate}
    \item For each $(p_0, \hdots, p_d) \in \calV_d$:
      \begin{enumerate}
      \item If $F_i = F_0$ (\texttt{mp0} or \texttt{rhr}):
        \begin{enumerate}
        \item Compute $U$ for $(p_0, \hdots, p_{d})$ using theorem
          \ref{thm:f0-exact} or \ref{thm:equivalence}.
        \item Remove updates from $\calV_0, \hdots, \calV_{d-1}$ by
          visibility (see figure \ref{fig:skip-zones}).
        \end{enumerate}
      \item Otherwise, if $F_i = F_1$ (\texttt{mp1}):
        \begin{enumerate}
        \item Compute $U$ by solving eq.\@
          \ref{eq:constrained-minimization} numerically (we use SQP).
        \item Remove all lower-dimensional updates from
          $\calV_0, \hdots, \calV_{d-1}$.
        \end{enumerate}
      \item Set $\hat{U} \gets \min(\hat{U}, U)$.
      \end{enumerate}
    \end{enumerate}
  \end{enumerate}
\end{algorithm}

\begin{algorithm}[t]
  \caption{The \emph{bottom-up} algorithm.}\label{alg:bottom-up}
  \begin{enumerate}[nolistsep]
  \item Set $\hat{U} \gets \infty$ and $p_0 \gets \pnew$.
  \item For $i = 1, \hdots, n - 1$:
    \begin{enumerate}
    \item For each \texttt{valid} $p_i$ close enough to
      $p_0, \hdots, p_{i-1}$ (see section\@
      \ref{ssec:bottom-up-search}), do the update corresponding to
      $(p_0, \hdots, p_i)$ and keep track of the minimizing
      $\lambda^* \in \Delta^i$. This update can optionally be skipped
      by first computing $\mu^*$ corresponding to the optimum of the
      incident lower-dimensional update $(p_0, \hdots, p_{i-1})$ and
      checking if $\mu^* \geq 0$.\label{enum:bottom-up-for}
    \item Let $p_i$ be the node which forms the update with the
      minimum value.
    \item If $F_i = F_0$ (\texttt{mp0} or \texttt{rhr}), compute $U$
      for $(p_0, \hdots, p_{i})$ using theorem \ref{thm:f0-exact} or
      \ref{thm:equivalence}.
    \item Otherwise, if $F_i = F_1$ (\texttt{mp1}), compute $U$ for
      $(p_0, \hdots, p_{i})$ by solving eq.\@
      \ref{eq:constrained-minimization}.
    \item Set $\hat{U} \gets \min(\hat{U}, U)$.
    \end{enumerate}
  \end{enumerate}
\end{algorithm}

In this section, we describe our \emph{bottom-up} and \emph{top-down}
algorithms. We note for clarity that these algorithms correspond to
the ``compute $\hat{U}$'' part of item \ref{enum:update-U} of alg.\@
\ref{alg:dijkstra-like}.

To describe our \emph{top-down} algorithm (see algorithm
\ref{alg:top-down}), we define the set of admissible $d$-dimensional
updates neighboring the point $\hat{p}$ by:
\begin{equation}\label{eq:calU}
  \begin{aligned}
    \calV_d = \big\{\set{p_0, \hdots, p_d}: \; &p_i\texttt{.state}=\texttt{valid} \mbox{ for } i = 0, \hdots, d, \\
    &\mbox{and} \set{p_0, \hdots, p_d} \mbox{are in a selected update group}, \\
    &\mbox{and } \pnew \in \set{p_0, \hdots, p_d} \big\}
  \end{aligned}
\end{equation}
for $d = 0, \hdots, n - 1$. The update set $\calV_d$ collects all
admissible simplex updates of a given dimension: i.e., updates which
both belong to a group as defined in section\@
\ref{ssec:simplex-enumeration} and are \texttt{valid}. The third
condition is an important optimization. To see why it is correct, fix
an update set $\calV_d$. If $\set{p_0, \hdots, p_d}$ satisfies the
first two conditions but not the third, we can see that $\hat{p}$
would have already been updated from it in a previous iteration. All
new information affecting $\hat{U}$ during this iteration must be
computed from an update involving $\pnew$.

The \emph{bottom-up} algorithm (algorithm \ref{alg:bottom-up}) builds up each
update $(p_0, \hdots, p_d)$ one vector at a time by searching for
adjacent minimizing updates of higher dimension. The optimization
involving $\pnew$ described above can be incorporated by initially
setting $p_0 \gets \pnew$.

\section{Numerical Results}\label{sec:numerical-results}

We do tests involving several different slowness functions with
analytic solutions for point source data, and a linear speed function
(i.e., $1/s$) which has been shown to be amenable to local
factoring. For each quadrature rule described in section\@
\ref{ssec:quadrature} (\texttt{mp0}, \texttt{mp1}, or \texttt{rhr}),
we have two 2D algorithms, \texttt{olim4} and \texttt{olim8},
corresponding to 4-\ and 8-point stencils, respectively. Since there
is no advantage in 2D, we don't apply the \emph{top-down} or
\emph{bottom-up} approaches. In 3D, we have three \emph{top-down}
algorithms: \texttt{olim6} (group IVa), \texttt{olim18} (groups I,
IVa, and IVb), and \texttt{olim26} (group V). We also test the
\emph{bottom-up} algorithm \texttt{olim3d} (see figure
\ref{fig:hu-neighborhoods}).

\subsection{Implementation Notes}\label{ssec:impl-notes}

\begin{figure}
  \centering \includegraphics[width=\linewidth]{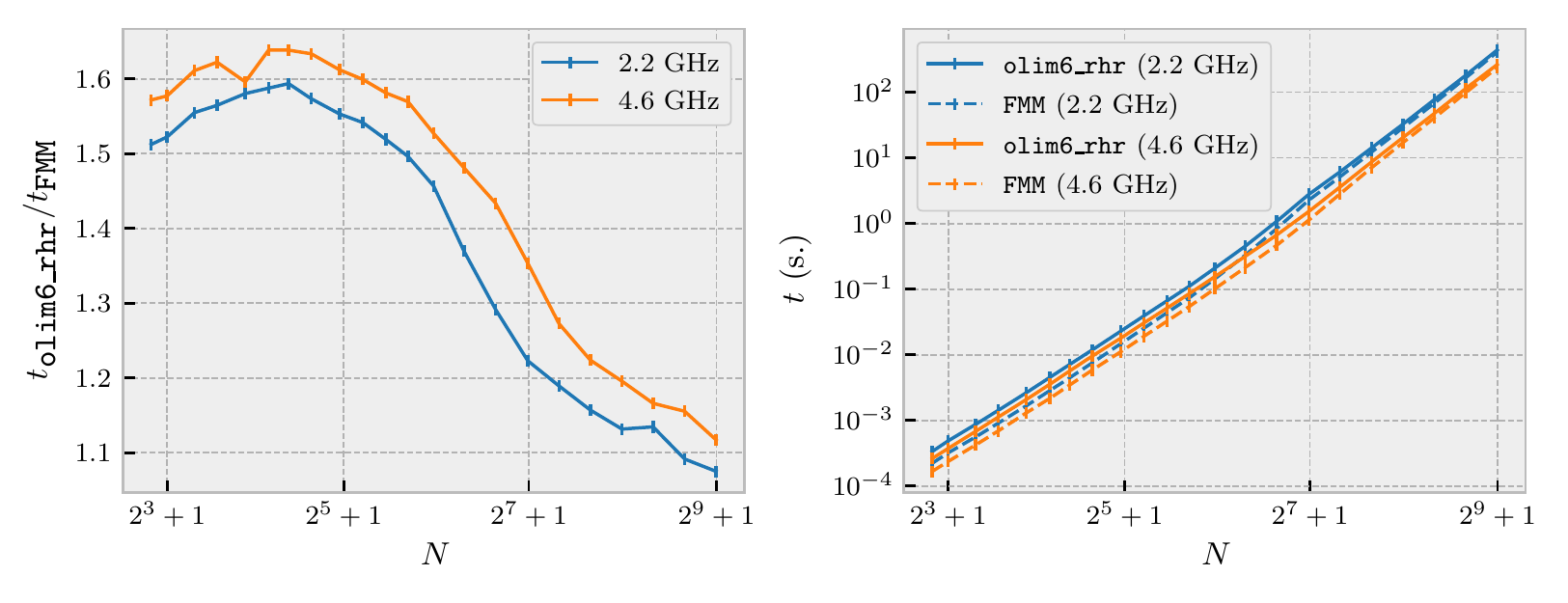}%
  \caption{\emph{Slowdown incurred by using \texttt{olim6\_rhr}
      instead of the FMM.} From left to right: 1) the ratio of
    runtimes versus $N$, 2) the total CPU runtime of each solver. We
    compare results on two different computers: ``2.2 GHz'' is a 2015
    MacBook Air with a 2.2 GHz Intel Core i7 CPU, 8 GB of 1600 MHz
    DDR3 RAM, a 256 KiB L2 cache, and a 4 MiB L3 cache; ``4.6 GHz'' is
    a custom built workstation running Linux with a 4.6 GHz Intel Core
    i7 CPU, 64 GB of 2133 MHz DDR4 RAM, a 1536 KiB L2 cache, and 12
    MiB L3 cache. Both computers have 32 KiB L1 instruction caches and
    data caches. The plots here use our standard $\Omega = [-1, 1]^3$
    domain discretized into $N = 2^p + 1$ nodes in each direction,
    with $s \equiv 1$ and a point source at the
    origin.}\label{fig:speed-comparison}
\end{figure}

\begin{figure}
  \includegraphics[width=\linewidth]{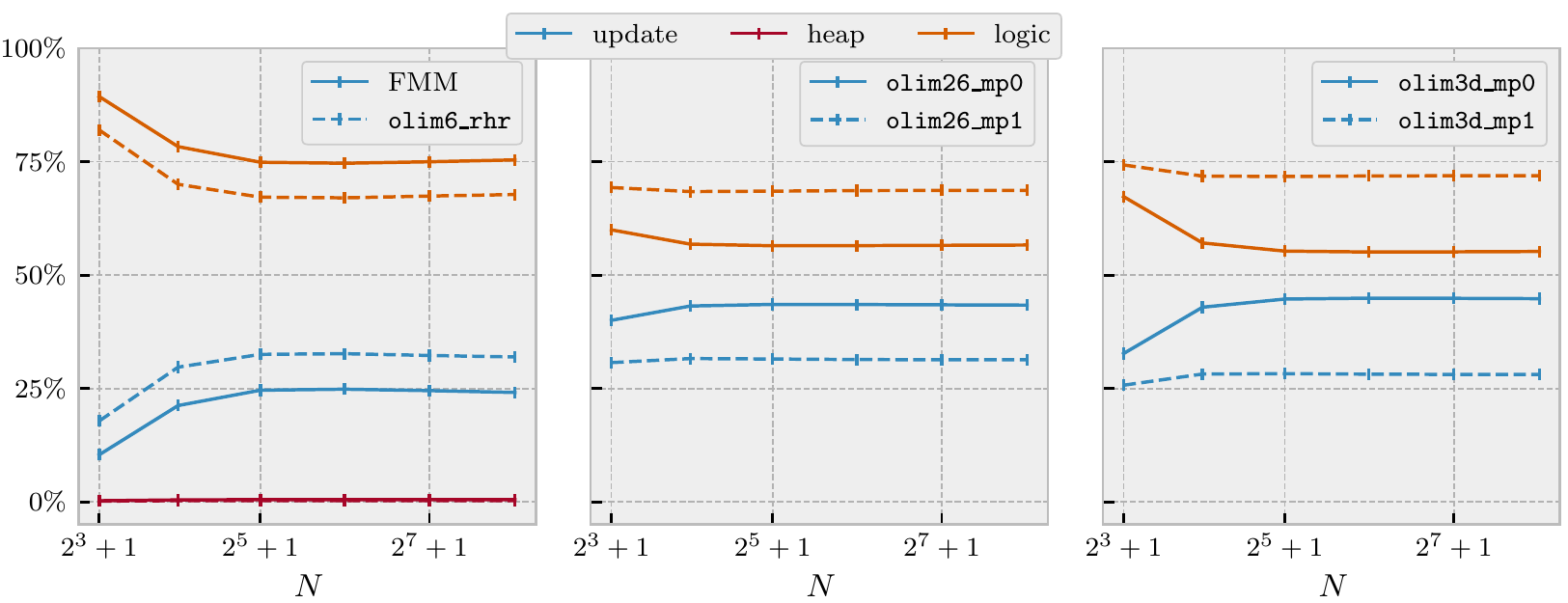}%
  \caption{\emph{Percentage of time spent on different tasks as
      determined by profiling.} The ``update'' tasks and ``heap''
    tasks are clearly defined, while the ``logic'' task contains a
    variety of things related to control-flow, finding neighbors, and
    memory movement---basically, the parts of algorithm
    \ref{alg:dijkstra-like} that don't clearly pertain to computing
    new $\hat{U}$ values or keeping \texttt{front} updated. From these
    plots, it is clear that memory speed plays a large role in
    determining efficiency. To some extent, even though the more
    complicated update procedures are slower, their slowness is hidden
    somewhat by memory latency as problem sizes grow. For large $N$
    and some solvers (the middle and right plots), ``heap'' takes too
    little time, and is not picked up by the
    profiler.}\label{fig:tasks}
\end{figure}

Before describing our numerical tests, we briefly comment on our
implementation and make some observations about its performance. A
discussion of some of the choices that we made in our implementation
follows:
\begin{itemize}
\item We precompute and cache all values of $s$ on the grid $\calG$,
  as opposed to reevaluating $s$, because we assume that $s$ will be
  provided as gridded data (consider, e.g., the shape from shading
  problem~\cite{kimmel2001optimal}, where the input data is an image).
\item We maintain \texttt{front} using a priority queue implemented
  using an array-based binary heap, which is updated using the
  \texttt{sink} and \texttt{swim} functions described in Sedgewick and
  Wayne~\cite{sedgewick2011algorithms}.
\item We store \texttt{front} as a dense grid of states: for each node
  in $p \in \calG$, we track $p$\texttt{.state} for all time for every
  node. We could implement a sparse \texttt{front} using a hash map or
  a quadtree or octree, which would save space, but would also be much
  slower to update.
\end{itemize}

We use a policy-based design~\cite{alexandrescu2001modern} written in
C++. This allows us to conditionally compile different features and
reuse logic to implement different Dijkstra-like algorithms. In
particular, we implement the standard FMM~\cite{sethian1996fast} and
make a direct comparison between it and the ordered line integral
method which it is equivalent to, \texttt{olim6\_rhr} (see figure
\ref{fig:speed-comparison}). We have found that only a modest slowdown
is incurred by using \texttt{olim6\_rhr} for problems of moderate
size. The disparity between the two is greater for smaller problem
sizes, which is due to cache effects. In general, the difference in
speed is due to the fact that the FMM's update is extremely simple
since it requires only the solution of quadratic equations.

Using Valgrind~\cite{nethercote2007valgrind}, we profiled running our
solver on the numerical tests below for different problem sizes and
categorized the resulting profile data. See figure
\ref{fig:tasks}. The ``update'' task corresponds to time spent
actually computing updates, the ``logic'' task is a grab bag category
for time spent on program logic, and ``heap'' corresponds to updating
the array-based heap which implements \texttt{front}. Since the
asymptotic complexity of the ``update'' and ``logic'' sections is
$O(N^n)$, and since ``heap'' is $O(N^n \log N)$, we can see from
figure \ref{fig:tasks} that since so little time is spent updating the
heap, \emph{the algorithm's runtime is better thought of as $O(N^n)$
  for practical problem sizes (although this obviously not technically
  correct!).} This is a consequence of using an array-based heap whose
updates are cheap and cache friendly, and a dense grid of states,
which can be read from and written to in $O(1)$ time.

As an aside, we mention that thorough numerical studies of eikonal
solvers in the literature have been scarce, but we can point out a
recent study which seeks to close this gap~\cite{gomez2019fast}. Our
studies profiling our implementation using Valgrind are carried out in
the same spirit as this work.

\subsection[Single point source]{Slowness functions with an analytic
  solution for a point source}\label{ssec:point-source-problems}

\begin{figure}
  \centering \includegraphics[width=\linewidth]{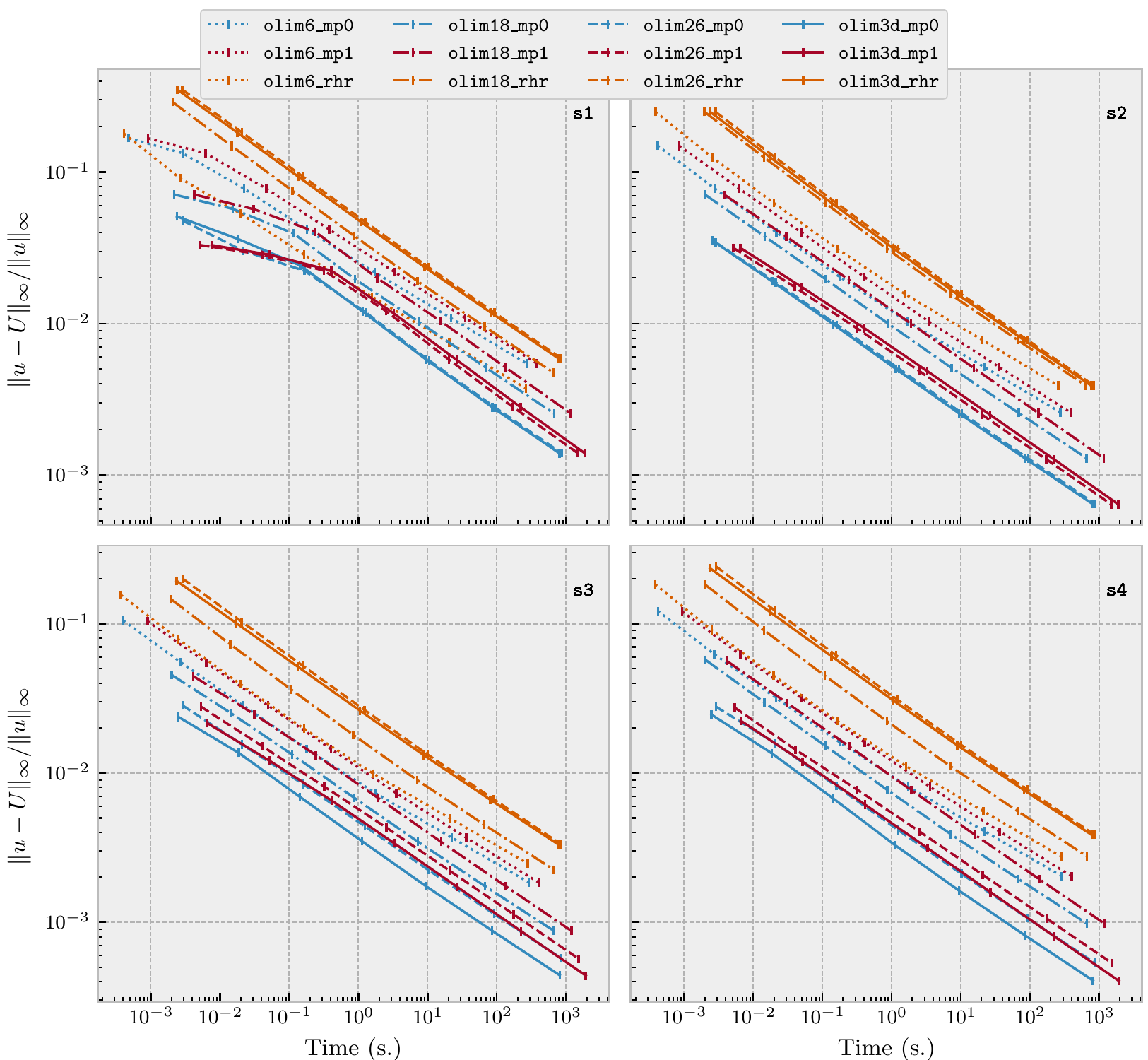}
  \caption{\emph{Relative $\ell_\infty$ error plotted against CPU
      runtime in seconds.} The domain is $\Omega = [-1, 1]^3$
    discretized uniformly in each direction into $N = 2^p + 1$ points,
    where $p = 3, \hdots, 9$, so that there are $N^3$ points
    overall. The slowness functions used are listed in section\@
    \ref{ssec:point-source-problems}. We note that the horizontal and
    vertical axes of each subplot are the
    same.}\label{fig:time-vs-error}
\end{figure}

\begin{figure}
  \centering \includegraphics[width=\linewidth]{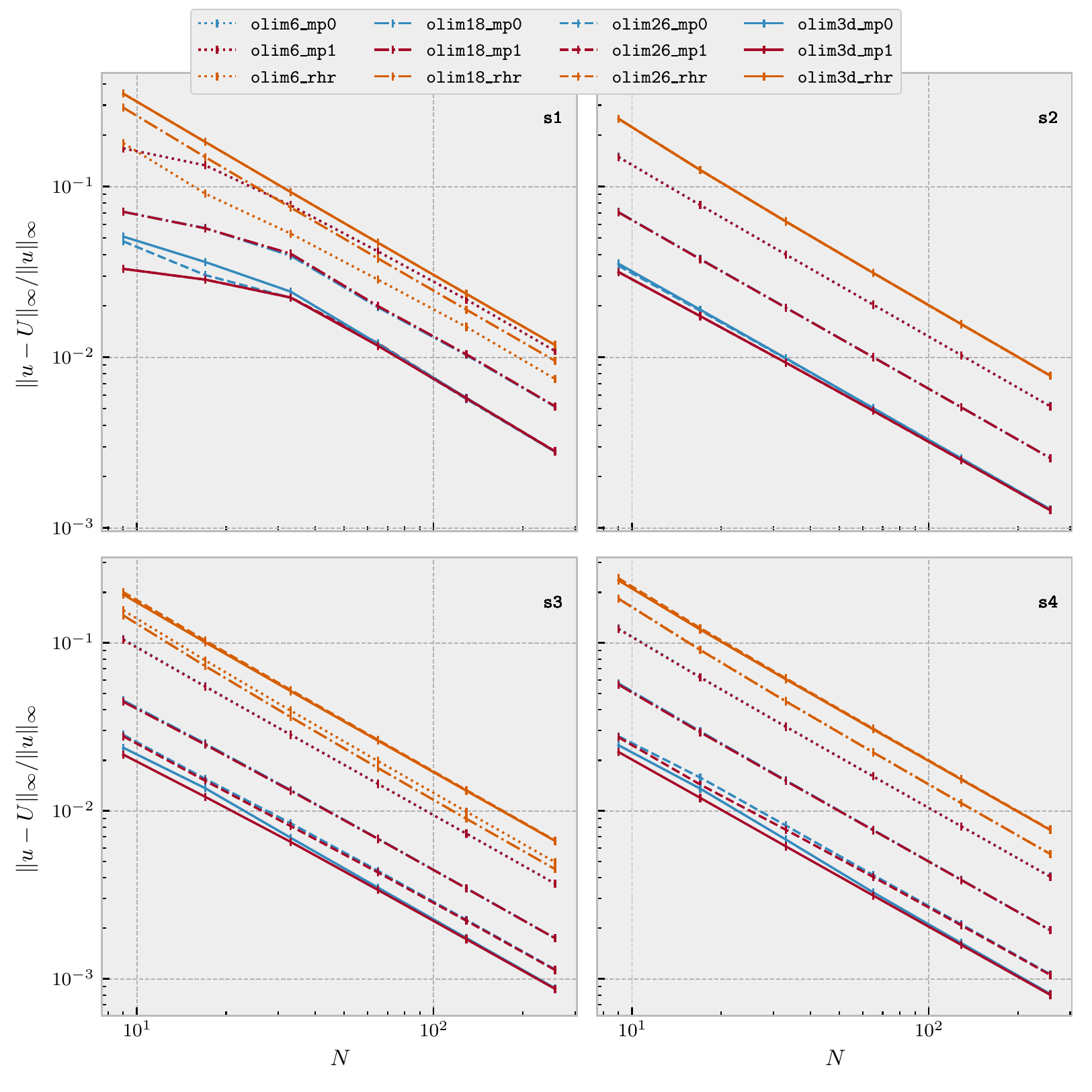}
  \caption{\emph{Relative $\ell_\infty$ error plotted versus $N$.} The
    setup is the same as in figure \ref{fig:time-vs-error}, except
    that $p = 3, \hdots, 8$, so that the largest $N$ is $257$ instead
    of $513$. For \texttt{olim26} and \texttt{olim3d}, we can see that
    \texttt{mp0} is initially less accurate than \texttt{mp1} but
    quickly attains parity, in accordance with theorem
    \ref{thm:mp0-newton}. For \texttt{olim6} and \texttt{olim18}, the
    error is the same between \texttt{mp0} and \texttt{mp1} for all
    slowness functions, so these plots
    overlap.}\label{fig:size-vs-error}
\end{figure}

Using eq.\@ \ref{eq:eikonal} directly, a simple recipe to create pairs
of slowness functions and solutions is to prescribe a continuous
function $u$ whose level sets are each homeomorphic to a ball and
compute $s(x) = \norm{\nabla u(x)}_2$ analytically, which is valid for
a single point source at the origin. Such tests allow us to observe
the effect of local factoring, and to see how \texttt{mp0},
\texttt{mp1}, and \texttt{rhr} compare. The following table lists our
test functions: \vspace{0.5em}
\begin{center}
  \begin{tabular}{cccc}
    Name & $u(x)$ & $s(x)$ \\
    \midrule
    \texttt{s1} & $\cos(r) + r - 1$ & $1 - \sin(r)$ \\
    \texttt{s2} & $r^2/2$ & $r$ \\
    \texttt{s3} & $S(x)^\top A S(x)$ & $\alpha\norm{\operatorname{diag}(C(x)){(A + A^\top)}S(x)}$ \\
    \texttt{s4} & $\tfrac{1}{2} x^\top A^{1/2} x$ & $\norm{x}_A = \sqrt{x^\top A x}$
  \end{tabular}
\end{center}
\vspace{0.5em} We assume that $x \in \Omega = [-1, 1]^3$. We also
define $r = \norm{x}$, and vector fields
$S(x) = (\sin(\alpha x_i))_{i=1}^3$ and
$C(x) = (\cos(\alpha x_i))_{i=1}^3$; we take $\alpha = \pi/5$. For
\texttt{s3} and \texttt{s4}, we assume that $A$ is symmetric positive
definite. In 3D, the matrices we use for \texttt{s3} and \texttt{s4}
are:\begin{equation} A_{\texttt{s3}} = \begin{bmatrix}
    1 & \nicefrac{1}{4} & \nicefrac{1}{8} \\
    \nicefrac{1}{4} & 1 & \nicefrac{1}{4} \\
    \nicefrac{1}{8} & \nicefrac{1}{4} & 1
  \end{bmatrix} = A_{\texttt{s4}}^{1/2}
\end{equation}

Our results are displayed in figures \ref{fig:time-vs-error} and
\ref{fig:size-vs-error}. We include the relative $\ell_\infty$ error
versus problem size and time, as well as the $\ell_\infty$ error
versus $N$. We summarize our observations:
\begin{itemize}
\item Using either of the midpoint rules (\texttt{mp0} or
  \texttt{mp1}) allows improved directional coverage to translate into
  an improved error constant. See figs.\@ \ref{fig:size-vs-error}
  and\@ \ref{fig:time-vs-error}.
\item For \texttt{rhr}, increased directional coverage
  (\texttt{olim6\_rhr} $\to$ \texttt{olim18\_rhr} $\to$
  \texttt{olim26\_rhr}) does not lead to an improved error. In fact,
  for \texttt{s1}, \texttt{s3}, and \texttt{s4}, increasing the
  directional coverages causes the accuracy to deteriorate (see figure
  \ref{fig:size-vs-error}). This may be due to the fact that
  quadrature error and linear interpolation error have different
  signs, and may partially compensate each other (e.g., in
  \texttt{olim6}). This effect may get worse with increased
  directional coverage.
\item If we scan each graph horizontally, focusing on the plot
  markers, we can see that the difference in error between
  \texttt{mp0} and \texttt{mp1} is minimal. For each \texttt{mp1}
  graph, the corresponding \texttt{mp0} graph has the same error, but
  is shifted to the left, reflecting the fact that the \texttt{mp0}
  OLIMs are substantially faster. This is consistent with theorem
  \ref{thm:mp0-newton}, which justifies the use of \texttt{mp0}. See
  fig.\@ \ref{fig:time-vs-error}.
\item With respect to the choice of neighborhood, \texttt{olim6} is
  the fastest; and, for each choice of neighborhood, \texttt{mp0}
  provides the best combination of speed and accuracy. See fig.\@
  \ref{fig:time-vs-error}. If we are willing to pay somewhat in speed,
  we can dramatically improve the error constant by improving the
  directional coverage and using a solver like
  \texttt{olim3d\_mp0}. This tradeoff is more pronounced for smaller
  problem sizes. A theme running through this work is that, as the
  problem size increases, memory access patterns come to dominate the
  runtime, and the disparity in runtimes between the faster and slower
  neighborhoods becomes less pronounced. To see this, compare the
  start of each graph in the top-left of the plots, and their ends in
  the bottom-right (again, see fig.\@ \ref{fig:time-vs-error}). We can
  observe, e.g., that the maximum horizontal distance between starting
  points and ending points has decreased significantly, which confirms
  this observation.
\item Our high accuracy algorithms allow us to obtain a better
  solution on rough grids: this is helpful since opportunities to
  refine the mesh are limited in 3D. Discretizing $\Omega = [-1, 1]^2$
  in each direction into $N = 2^{14} + 1$ nodes requires about as much
  memory as discretizing $\Omega = [-1, 1]^3$ with $N = 2^{9} + 1$,
  which leads to $h$ being 32 times smaller in 2D than in 3D.
\item In general, \texttt{olim26} and \texttt{olim3d} are
  significantly more accurate than \texttt{olim6} and \texttt{olim18}.
\end{itemize}

\subsection{A linear speed function}\label{ssec:slotnick}

\begin{figure}
  \centering
  \includegraphics[width=\linewidth]{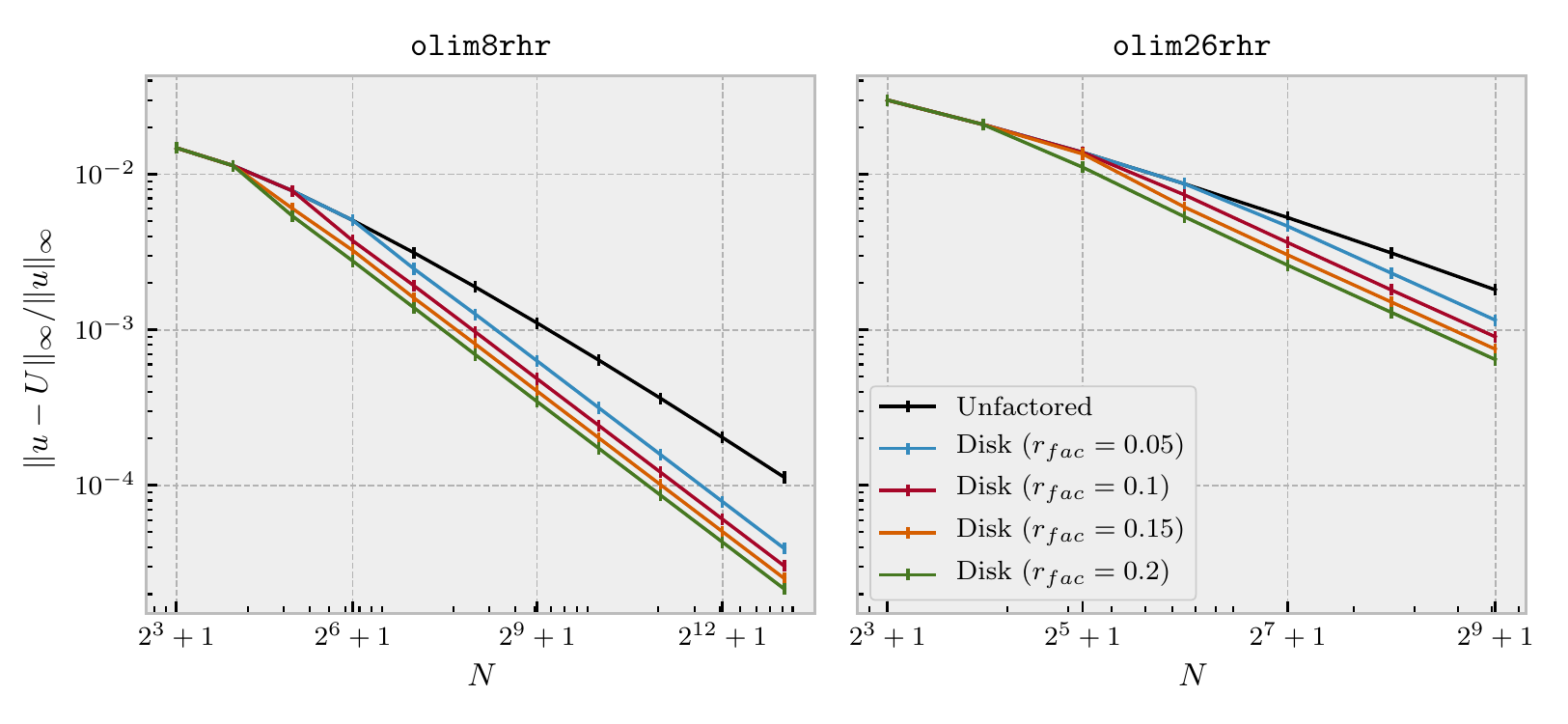}%
  \caption{\emph{Comparing different ways of selecting factored
      nodes.} For the test problem, $\Omega = [-1, 1]^n$, with $n = 2$
    (left) and $n = 3$ (right). The domain is descretized into $N^3$
    nodes, where $N = 2^p + 1$, so that $h = 2/(N - 1)$. The slowness
    function is constant ($s \equiv 1$). For the 2D problem,
    \texttt{olim8\_rhr} is used; \texttt{olim26\_rhr} is used for the
    3D problem. Solutions for the unfactored problem are plotted,
    along with solutions using a disk/sphere neighborhood with
    constant factoring radius given by $\rfac = 0.05, 0.1, 0.15,
    0.2$. We note that for this problem the choice $\rfac = \sqrt{n}$
    results in an exact solution. This only applies to the constant
    slowness function, $s \equiv 1$.
  }\label{fig:factoring-error-example}
\end{figure}

We consider a problem that has a known analytical solution and has
been used as a test problem for other factored eikonal equation
solvers before\footnote{We thank D.\ Qi for helpful discussions
  regarding this
  problem.}~\cite{slotnick1959lessons,fomel2009fast,qi2018corner}. For
a single point source at $x_i$ and a vector $v$, we define:
\begin{equation}
  \label{eq:slotnick-single-source}
  \frac{1}{s(x)} = \frac{1}{s(x_i)} + v^\top {(x - x_i)},,
\end{equation}
where $s_i = s(x_i)$. The analytic solution to eq.\@
\ref{eq:eikonal} for a single source and slowness function given by
eq.\@ \ref{eq:slotnick-single-source}
is~\cite{slotnick1959lessons}:
\begin{equation}
  \label{eq:slotnick-single-source-solution}
  u_i(x) = \frac{1}{\norm{v}} \cosh^{-1} \parens{1 + \frac{s_i}{2} s(x) \norm{v}^2 \norm{x - x_i}^2}.
\end{equation}
If we shift the point source from $x_i$ to another location $x_j$, we
find:
\begin{equation}
  \label{eq:slotnick-slowness-shift}
  \frac{1}{s_i} + v^\top {(x - x_j + x_j - x_i)} = \frac{1}{s_i} + v^\top {(x_j - x_i)} + v^\top {(x - x_j)} = \frac{1}{s_j} + v^\top {(x - x_j)}.
\end{equation}
That is, the slowness function $s$ remains unchanged as it is
rewritten with respect to a different source.

\begin{figure}
  \centering
  \includegraphics[width=\linewidth]{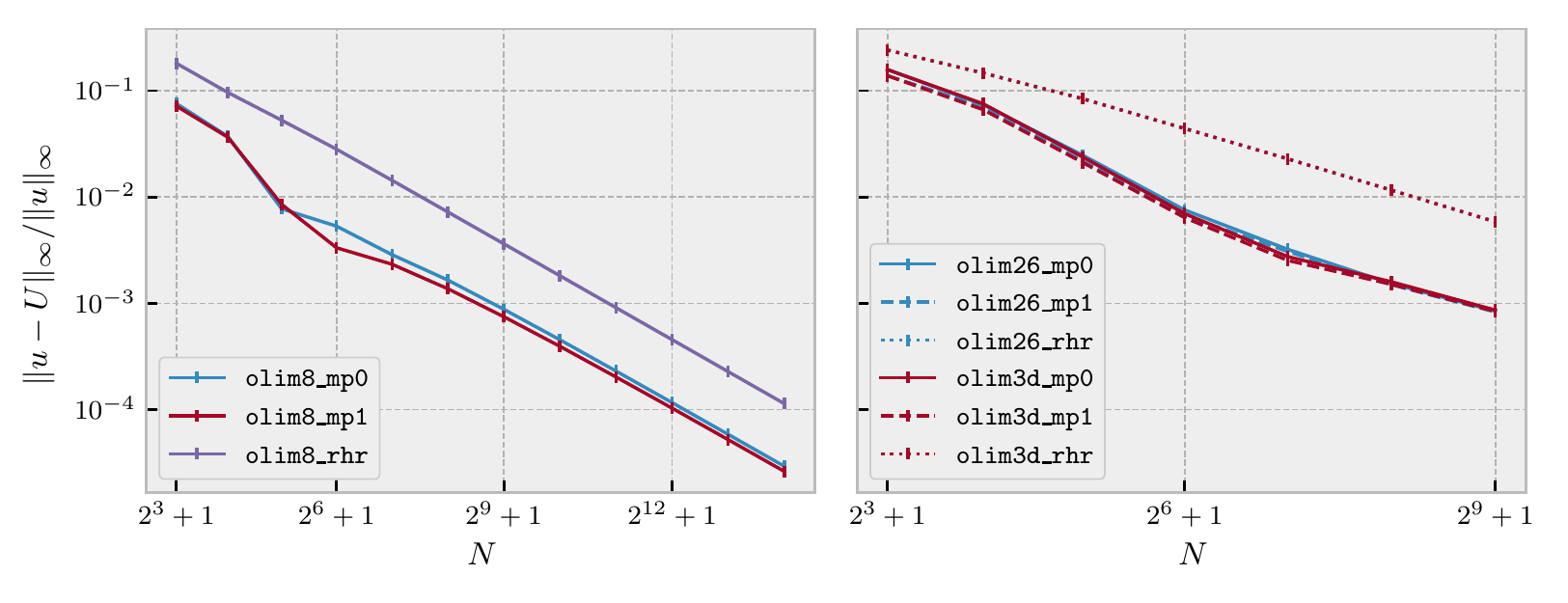}
  \includegraphics[width=\linewidth]{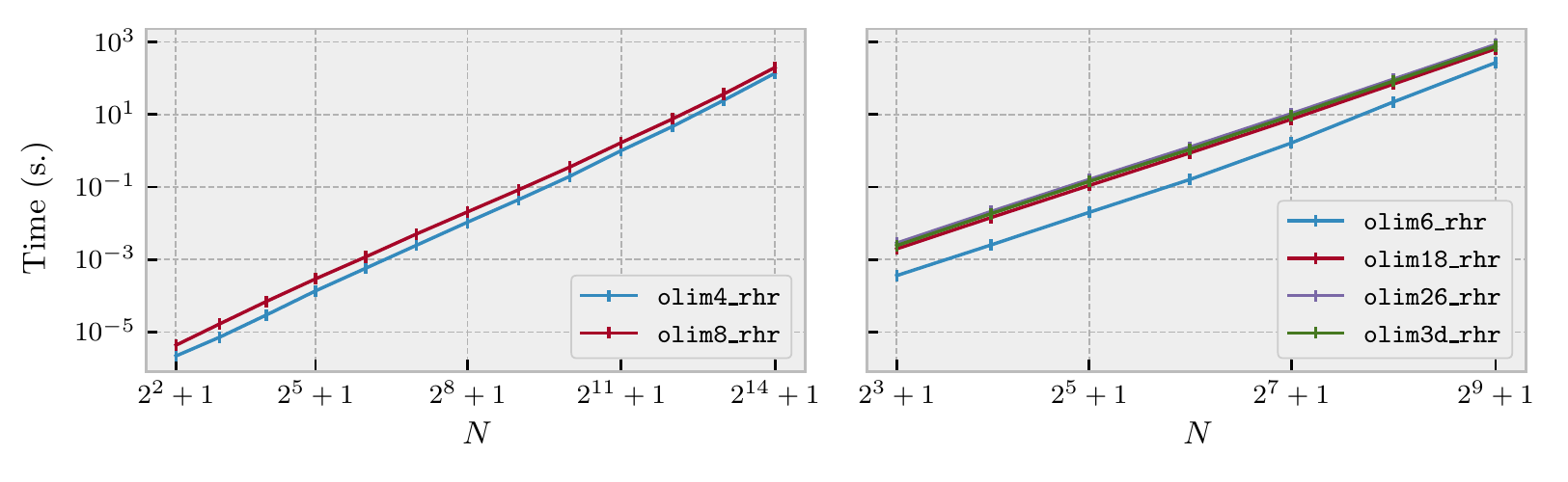}
  \caption{\emph{Numerical results for the linear speed function of
      section\@ \ref{ssec:slotnick}.} Problem sizes are $N = 2^p + 1$,
    where $p = 3, \hdots, 14$ in 2D and $p = 3, \hdots, 9$ in 3D. The
    total number of nodes is $N^n$, where $n = 2, 3$. See section\@
    \ref{ssec:slotnick} for least squares fits. Top row: relative
    $\ell_\infty$ error plotted versus $N$ in 2D (left) and 3D
    (right). Bottom row: wall clock time plotted versus $N$ in 2D
    (left) and 3D (right).}\label{fig:slotnick-plots}
\end{figure}

If $\set{x_i}$ is a set of point sources and $u_i(x)$ is the solution
of the eikonal equation for the single point source problem with point
source given by $x_i$, then the solution for the multiple point source
problem with sources $\set{x_i}$ is:
\begin{equation}
  u(x) = \min_i u_i(x).
\end{equation}
We use this formula to compare relative $\ell_\infty$ errors for each
of our OLIMs in 2D and \texttt{olim26} and \texttt{olim3d} in 3D for
this slowness function with a pair of point sources, $x_1 = (0, 0)$
and $x_2 = (0.8, 0)$ in 2D, and $x_1 = (0, 0, 0)$ and
$x_2 = (0.8, 0, 0)$ in 3D. We set the domain of the problem to be
$\Omega = [0, 1]^n$ and discretize it into $N = 2^p + 1$ points, so
that $h = (N-1)^{-1}$.

For this choice of slowness function, we plot the CPU runtime versus
$N$ (see figure \ref{fig:slotnick-plots}), along with the relative
$\ell_\infty$ error versus $N$ (see figure
\ref{fig:slotnick-plots}). We also do least squares fits for these
plots to get an overall sense of the accuracy and speed (see
\ref{table:qv-least-squares}).

We can see that our conclusions from section\@
\ref{ssec:point-source-problems} also hold for the multiple point
source problem. Additionally, our least-squares fits (table\@
\ref{table:qv-least-squares}) indicate to us that our algorithms'
runtimes are accurately described by the fit $T_N \sim C_T N^{\alpha}$
with $\alpha \approx n$, and the error by $E_N \sim C_E h^{\beta}$,
with $\beta \approx 1$ (here, $E_N$ is the relative $\ell_\infty$
error). In fact, for \texttt{olim26} and \texttt{olim3d} with
\texttt{mp0} or \texttt{mp1}, the power $\beta$ is improved beyond $1$
to $\beta \approx 1.3$.

\begin{table}
  \begin{subtable}{0.5\textwidth}
    \centering
    {
      \small
      \begin{tabular}{ccc}
        Neighborhood & $C_T$ & $\alpha$ \\
        \hline \noalign{\vskip 0.2em}
        \texttt{olim4} & $7.779\times 10^{-8}$ & 1.0785 \\
        \texttt{olim8} & $1.971\times 10^{-7}$ & 1.0515 \\
        \hline \noalign{\vskip 0.2em}
        \texttt{olim6} & $2.968\times 10^{-7}$ & 1.085 \\
        \texttt{olim18} & $2.984\times 10^{-6}$ & 1.018 \\
        \texttt{olim26} & $4.649\times 10^{-6}$ & 1.0103 \\
        \texttt{olim3d} & $3.923\times 10^{-6}$ & 1.013 \\
      \end{tabular}
    }
    \caption{$T_N \sim C_T N^{\alpha n}$}
  \end{subtable}%
  \begin{subtable}{0.4999\textwidth}
    \centering
    {
      \small
      \begin{tabular}{ccc}
        Neighborhood & $C_E$ & $\beta$ \\
        \hline \noalign{\vskip 0.2em}
        \texttt{olim8\_mp0} & 0.4077 & 0.98744 \\
        \texttt{olim8\_mp1} & 0.3683 & 0.993 \\
        \texttt{olim8\_rhr} & 1.511 & 0.9728 \\
        \hline \noalign{\vskip 0.2em}
        \texttt{olim26\_mp0} & 2.328 & 1.3135 \\
        \texttt{olim26\_mp1} & 1.949 & 1.2888 \\
        \texttt{olim26\_rhr} & 1.772 & 0.90394 \\
        \hline \noalign{\vskip 0.2em}
        \texttt{olim3d\_mp0} & 2.268 & 1.3141 \\
        \texttt{olim3d\_mp1} & 1.865 & 1.2885 \\
        \texttt{olim3d\_rhr} & 1.77 & 0.90353
      \end{tabular}
    }
    \caption{$E_N \sim C_E h^\beta$}
  \end{subtable}
  \caption{\emph{Least-squares fits of the runtime and relative
      $\ell_\infty$ error for OLIMs in 2D and 3D.} We denote the time
    for a given $N$ by $T_N$; likewise, $E_N$ denotes the relative
    $\ell_\infty$ error for a specific $N$. We fit $T_N$ to a power
    $C_T N^\alpha$. In 2D, we expect $\alpha \approx 2$; in 3D,
    $\alpha \approx 3$. In 3D, we fit $E_N$ to $C_E h^\beta$, and
    expect $\beta \approx -1$ in all cases, due to the use of local
    factoring. In fact, for \texttt{olim26} and \texttt{olim3d} using
    either \texttt{mp0} or \texttt{mp1}, we find that the situation is
    better than expected, with
    $\beta \approx -1.3$.}\label{table:qv-least-squares}
\end{table}

\subsection{Marmousi velocity model tests}

\begin{figure}[t]
  \centering
  \includegraphics[width=\linewidth]{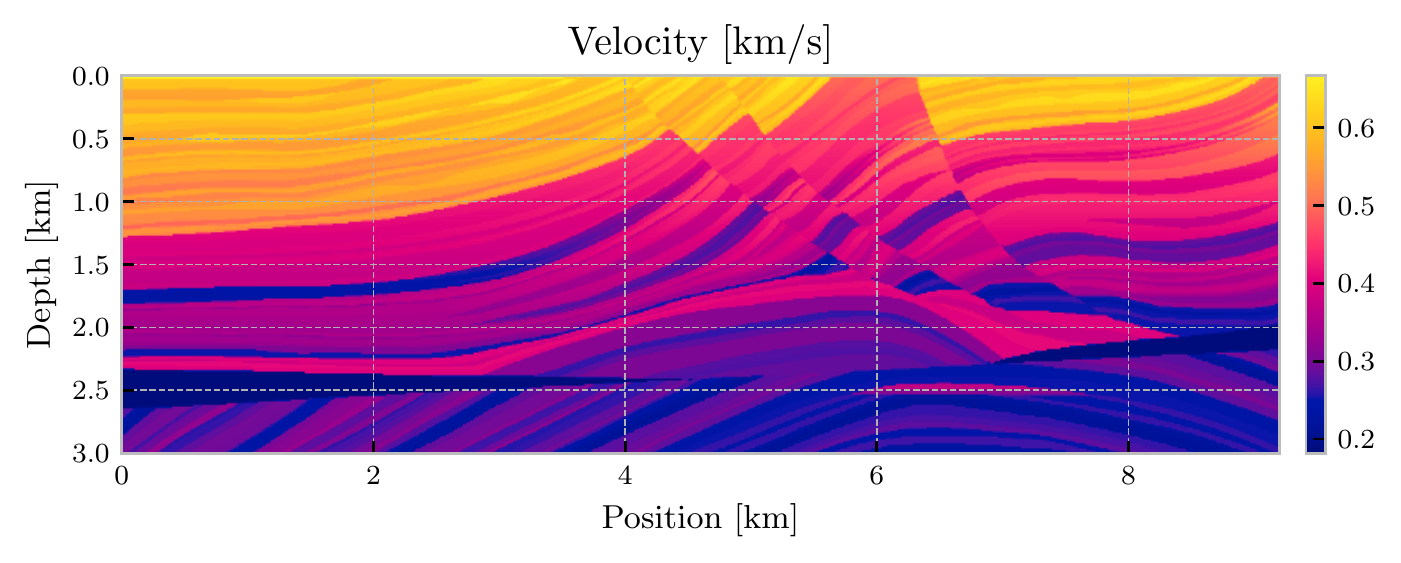}
  \caption{\emph{Marmousi slowness model.} The original Marmousi model
    is given as a velocity model $c$. We work with $s = 1/c$, so we
    plot this here. We also extrude this slowness model in the $y$
    direction to create a 3D model.}\label{fig:marmousi-slowness}
\end{figure}

\begin{figure}[t]
  \centering
  \includegraphics[width=0.3333\linewidth]{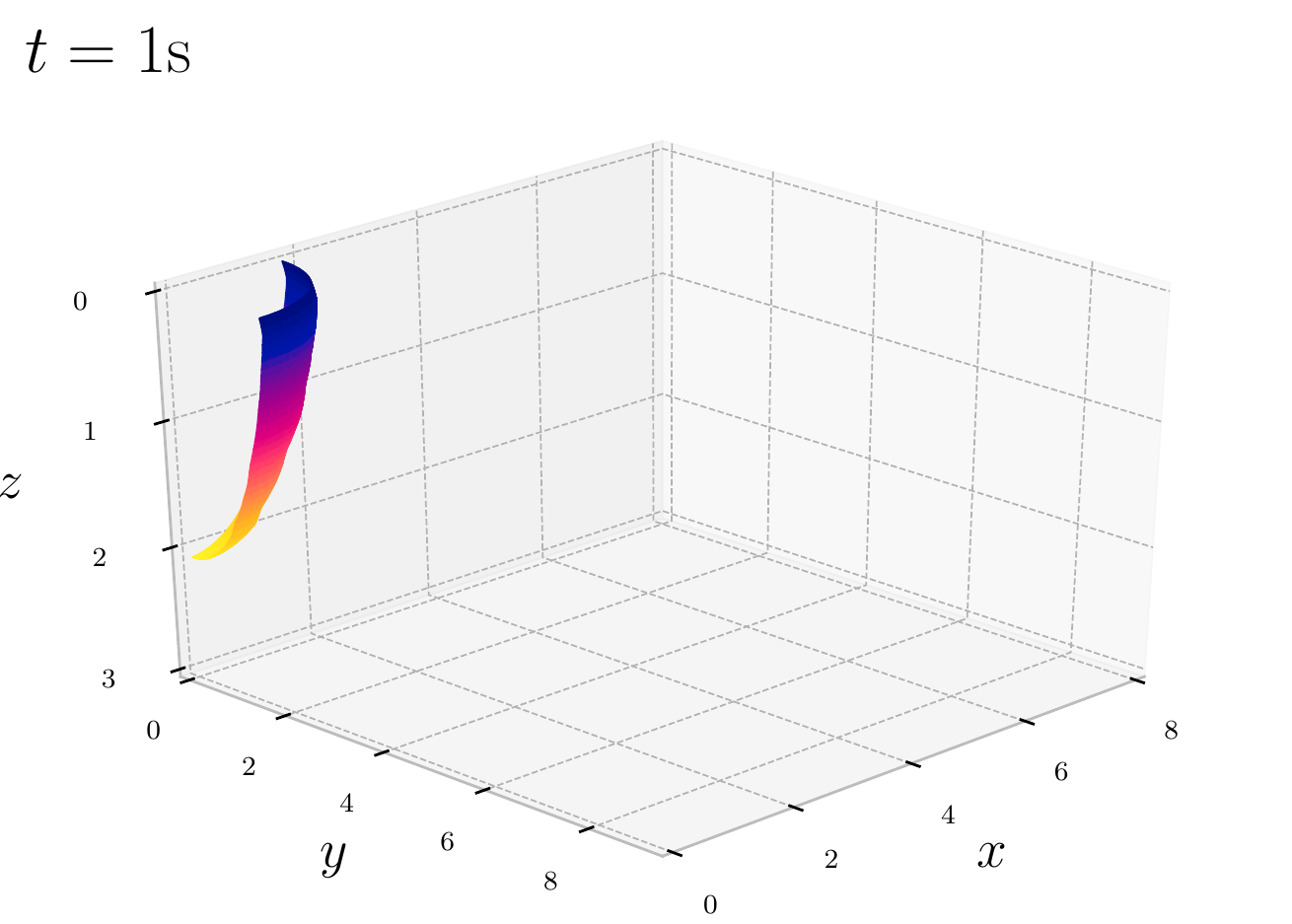}%
  \includegraphics[width=0.3333\linewidth]{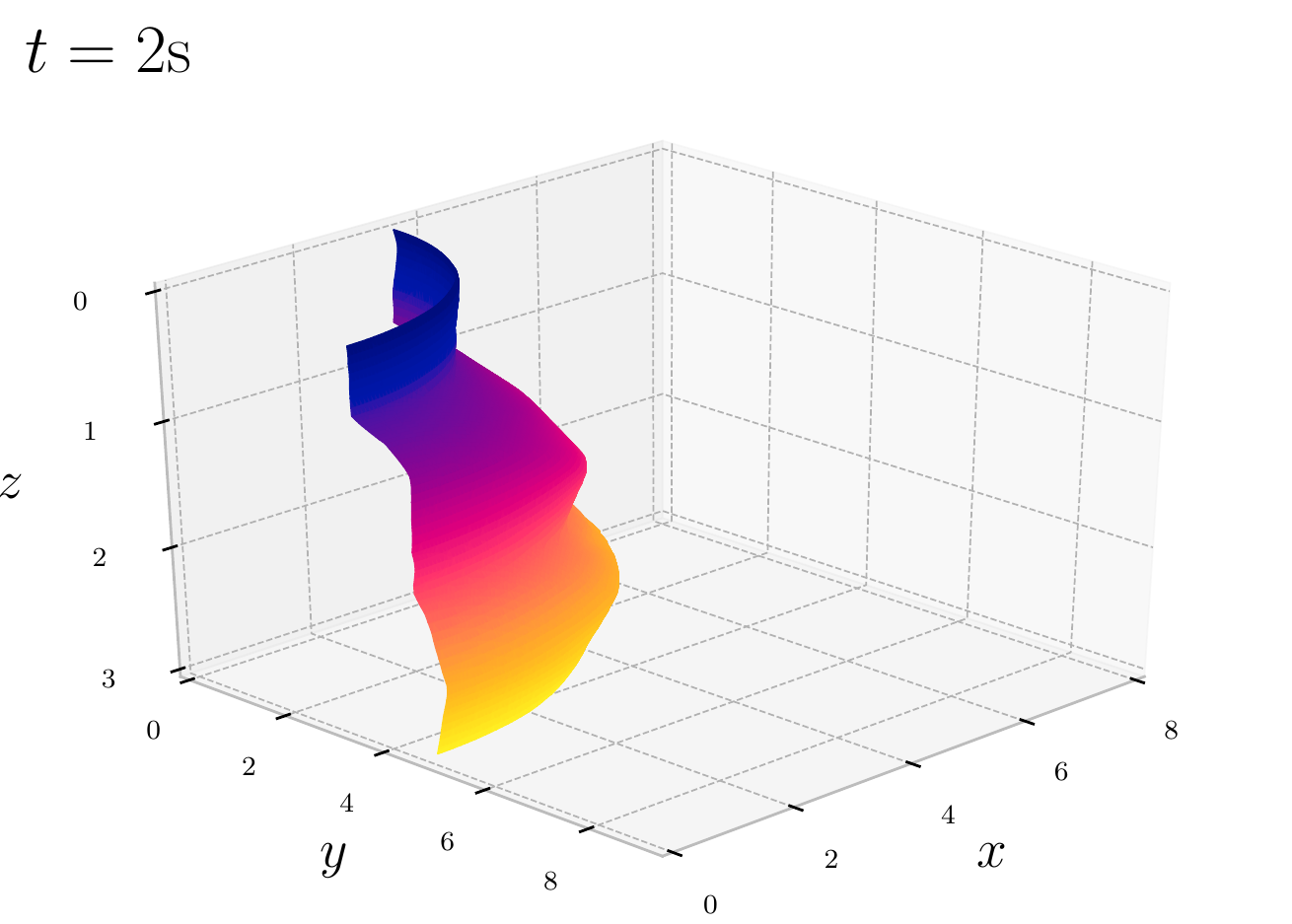}%
  \includegraphics[width=0.3333\linewidth]{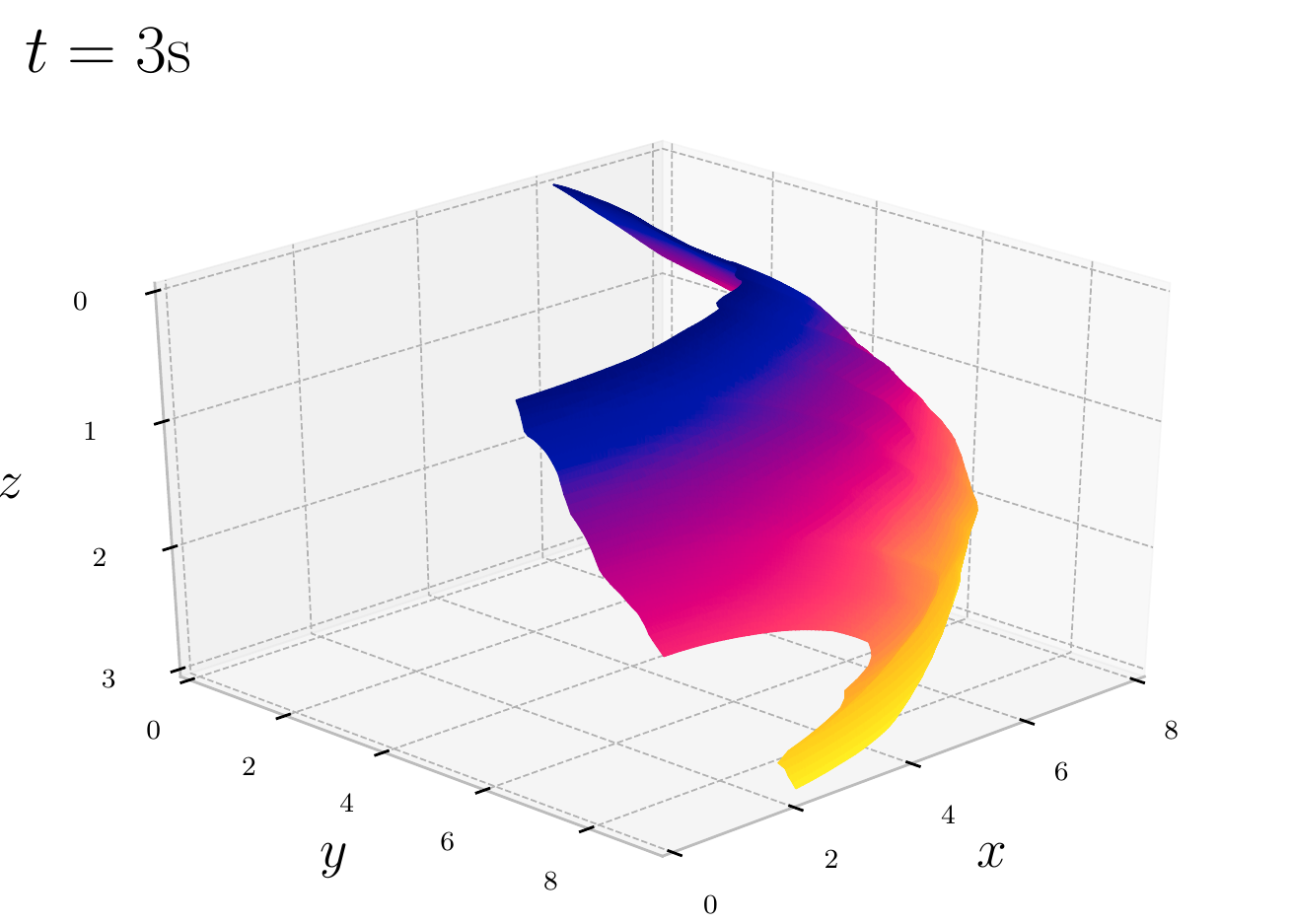}
  \caption{ Level sets of $U$ (first arrival time) for
    $t = 1\si{s}, 2\si{s}, 3\si{s}$ for the extruded 3D
    Marmousi slowness model computed on a $94 \times 288 \times 288$
    grid using \texttt{olim3d\_mp0}. Distances are in \si{km}. The
    level sets were computed using Lewiner's version of the marching
    cubes algorithm~\cite{lewiner2003efficient} as implemented in
    scikit-image~\cite{van2014scikit}.}\label{fig:marmousi-3d-t}
\end{figure}

\begin{figure}[t]
  \centering
  \includegraphics[width=\linewidth]{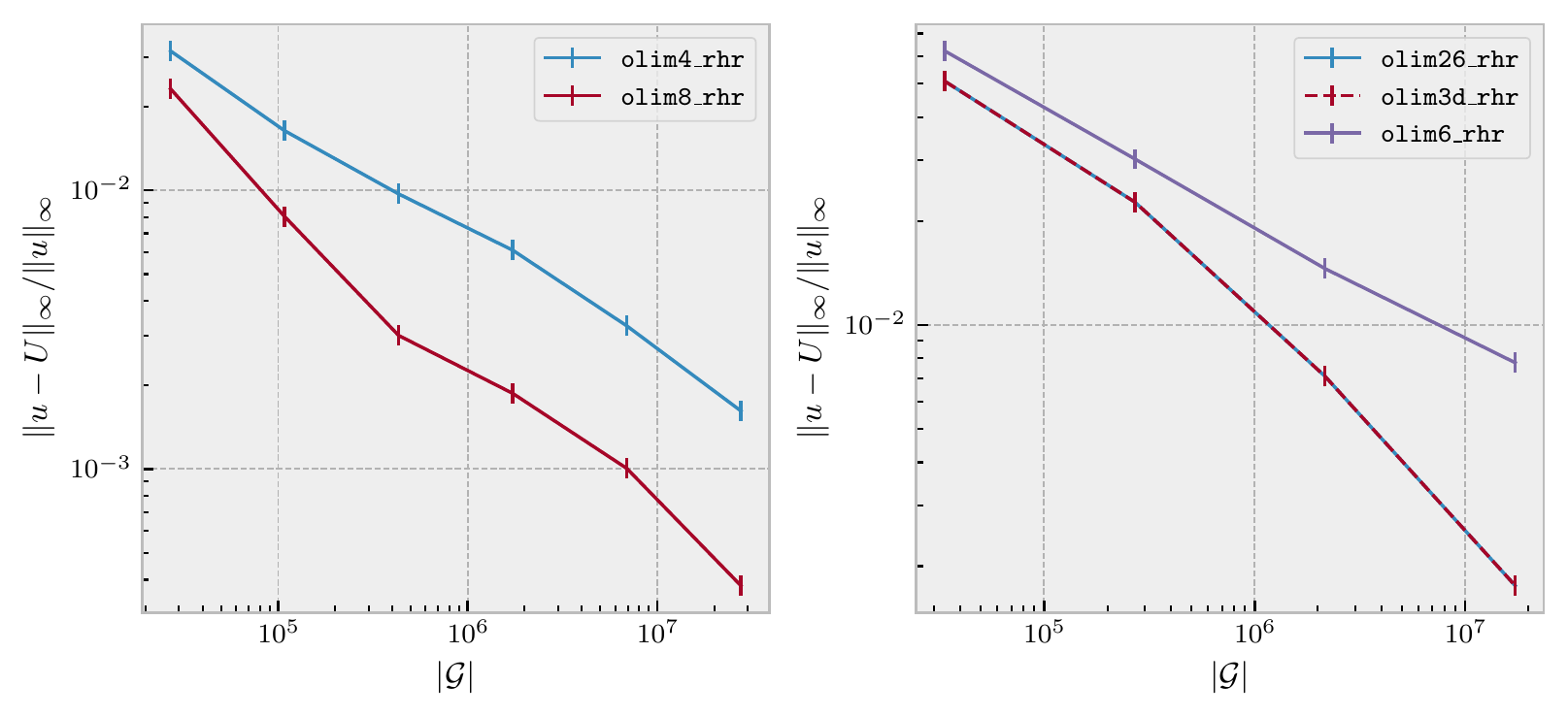}
  \caption{\emph{Relative $\ell_\infty$ errors for (unsmoothed)
      Marmousi point source problems in 2D and 3D using $\Frhr$.} The
    horizontal axis for each plot is the total number of grid nodes,
    $|\mathcal{G}|$. Left: 2D plots for \texttt{olim4\_rhr} and
    \texttt{olim8\_rhr}. Right: 3D plots for \texttt{olim6\_rhr},
    \texttt{olim26\_rhr}, and \texttt{olim3d\_rhr}. The 3D slowness
    model is obtained by extruding the original model in the $y$
    direction. 2D Note that the plots for \texttt{olim26\_rhr} and
    \texttt{olim3d\_rhr} overlap.}\label{fig:marmousi-rhr}
\end{figure}

A standard test problem in exploration geophysics is the Marmousi
velocity model, which is a synthetic velocity model based on the North
Quenguela Trough---see the following citation for more background
information~\cite{versteeg1994marmousi}. The model consists of a
number of stratified layers in the downward $z$ direction that are
roughly piecewise constant. This model has been used frequently as a
stress test for eikonal solvers, since the solution of the eikonal
equation estimates the first arrival time of a seismic P-wave.

For 2D tests, if $c_{\si{m}}(x, z)$ is the standard Marmousi velocity
model in $\si{m/s}$, we first convert to $\si{km/s}$ and set
$s(x, z) = 1/c_{\si{km}}(x, z) = 1000/c_{\si{m}}$. For the 3D tests,
we just extrude the model in the $z$ direction, setting
$s(x, y, z) \equiv s(x, z)$. We plot our slowness model in fig.\
\ref{fig:marmousi-slowness}. The domain used for 2D problems is
$\Omega = [0, 9.2] \times [0, 3]$, and for 3D problems we set
$\Omega = [0, 9.2] \times [0, 9.2] \times [0, 3]$ (all distances in
$\si{km}$). For each test, we set $\boundary = \{0\}$. To get a sense
of how a P-wave propagates in the extruded model, see fig.\
\ref{fig:marmousi-3d-t}, where we plot several level sets at one
second increments.

If we compare the quadrature rules defined in eq.\@
\ref{eq:quadrature-rules}, we can see that if:
\begin{equation}
  \hat{s} = s_0 = \cdots = s_d = \mbox{constant},
\end{equation}
i.e., if an update is performed in a region that is locally constant,
then $\Frhr \equiv \Fmpzero \equiv \Fmpone$. For the Marmousi model,
we can expect this to occur in most regions, with the exception of
updates that straddle two adjacent (approximately) piecewise constant
regions. Naturally, our assumption that $s$ is Lipschitz breaks down
for the Marmousi model.

In our numerical experiments, we have found $\Frhr$ to handle this
situation better than $\Fmpzero$ or $\Fmpone$, although all of our
solvers converge reasonably well for this model. We found that
increased directional coverage does lead to signficantly improved
accuracy. To demonstrate this phenomenon, we create a sequence of
scaled Marmousi slowness models in 2D and 3D. In 2D, we used the sizes
(288, 94), (575, 188), (1150, 376), (2301, 751), (4602, 1502), (9204,
3004), and (18408, 6008); in 3D, we used the sizes (144, 5, 47), (288,
10, 94), (575, 20, 188), (1150, 40, 376), (2301, 80, 751). In 2D, for
our ``ground truth'' solution, we solve the problem at the finest
resolution using \texttt{olim8\_rhr}; for each of the smaller problem
sizes, we downsample the groundtruth solution and compare with the
solution computed using the smaller size to estimate the relative
$\ell_\infty$ error. We do the same in 3D, but using
\texttt{olim3d\_rhr}. See fig.\@ \ref{fig:marmousi-rhr}.

Additionally, we have included supplemental numerical experiments
examining the behavior of different choices of quadrature rules on our
solvers' performance in 2D
online~\cite{marmousi-2d-online-supplement}.

\section{Conclusion}

We have presented a family of fast and accurate direct solvers for the
eikonal equation. The \emph{top-down} algorithm relies on enumerating
valid update simplexes, while the \emph{bottom-up} algorithm employs a
fast search for the first arrival characteristic. For each of these
solvers, one can use different quadrature rules: a simplified midpoint
rule (\texttt{mp0}), a midpoint rule (\texttt{mp1}), and a righthand
rule (\texttt{rhr}).

We have analyzed the relationship between these quadrature rules,
showing that the \texttt{mp0} rule can be used to compute an
approximate local characteristic direction, and $\hat{U}$ evaluated
using this direction, while incurring only $O(h^3)$ error per update,
which justifies its use.

We have conducted extensive numerical experiments that show that
\texttt{olim3d\_mp0} provides the best overall trade-off between
runtime and error. We also compare the speed of the standard fast
marching method in 3D with the equivalent \texttt{olim6\_rhr}
(equivalent in the sense that they compute the same solution to
machine precision). We demonstrate that \texttt{olim6\_rhr} incurs
only a very modest overhead, suggesting that the \emph{top-down}
approach is an efficient way of generalizing the fast marching method;
it also suggests that the \emph{bottom-up} approach is a viable
approach to speeding up Dijkstra-like algorithms in 3D, and should be
viable for other types of algorithms that solve related equations
(indeed, this has already been demonstrated for the
quasipotential~\cite{yang2019computing}).

To determine the relative time spent on different tasks, we have
profiled our C++ implementation using Valgrind, separating time spent
into several coarse-grained categories. From this, we show that for
practical problem sizes, the runtime of Dijkstra-like algorithms
behaves like $C N^n$, where $n = 2, 3$, and $N^n$ is the total number
of gridpoints (even if this is not strictly true from a computational
complexity viewpoint); we also emphasize that memory access patterns
play a large role in algorithm runtime, especially for large $N$.

We conclude that ordered line integral methods are a powerful approach
to obtaining a higher degree of accuracy when solving the eikonal
equation in 3D. With an appropriate choice of quadrature rule, we are
able to exploit improved directional coverage to drive down the error
constant. The improved accuracy more than makes up for the modest
price paid in speed, and we fully expect it to be possible to find
ways to optimize this family of algorithms further. We have also
attempted to demonstrate that memory access patterns dominate both
update time and time spent maintaining the front data structure, from
which we can conclude two things: 1) the exact time spent updating a
node is important but not paramount (improving accuracy is more
important than improving speed), 2) using memory optimally will lead
to a substantial speed-up for large problems.

\section{Acknowledgements}

We thank Prof.\ A.\ Vladimirsky for valuable discussions during the
course of this project.

\appendix

\section[Minimum action integral]{Minimum actional integral for the
  eikonal equation}\label{sec:minimum-action-integral} The eikonal
equation (eq.\@ \ref{eq:eikonal}) is a Hamilton-Jacobi equation for
$u$. If we let each fixed characteristic (ray) of the eikonal equation
be parametrized by some parameter $\sigma$ and denote
$p \equiv \nabla u$, the corresponding Hamiltonian is:
\begin{equation}
  \label{eq:eikonal-hamiltonian}
  H{(p, x)} = \frac{\norm{p}^2}{2} - \frac{s(x)^2}{2} = 0.
\end{equation}
Since $H = 0$, eq.\@ \ref{eq:eikonal-hamiltonian} implies
$L = \sup_p (\langle p, x' \rangle - H) = s(x) \norm{x'}$. Since
$x' = \partial_p H = p$ and $\norm{p} = s(x)$ can be expressed as:
\begin{equation}
  \label{eq:eikonal-lagrangian}
  L(x, x') = \langle p, x'\rangle = \langle x', x'\rangle = \langle \nabla u, x' \rangle = \frac{du}{d\sigma}.
\end{equation}

Let $x(\sigma)$ be a characteristic arriving at
$\hat{x} = x(\hat\sigma)$ from $x_0 = x(0)$, which lies on the
expanding front. Integrating from $0$ to $\hat\sigma$ and letting
$\hat u = u(\hat x)$ and $u_0 = u(x_0)$:
\begin{equation}
  \label{eq:minimum-action-on-a-ray}
  \hat u - u_0 = \int_{0}^{\hat\sigma} L(x, x') d\sigma = \int_{0}^{\hat\sigma} s(x) \norm{x'} d\sigma = \int_0^L s(x) dl,
\end{equation}
where $L$ is the length of the characteristic from $x_0$ to $\hat{x}$
and $dl$ is the length element. A characteristic of eq.\@
\ref{eq:eikonal} minimizes eq.\@ \ref{eq:minimum-action-on-a-ray}
over admissible paths. Then, if $\hat{x}$ is fixed and $\alpha$ is an
arc-length parametrized curve with $\alpha(L) = \hat{x}$, eq.\@
\ref{eq:minimum-action-on-a-ray} is equivalent to:
\begin{equation}\label{eq:eikonal-minimum-action-path}
  \hat{u} = u(\hat{x}) = \min_\alpha \curlyb{u(\alpha(0)) + \int_\alpha s(x) dl}.
\end{equation}
Our update procedure is based on eq.\@
\ref{eq:eikonal-minimum-action-path}. This problem may have multiple
local minima---$\hat{u}$ above corresponds to the first arrival, which
is what interests us primarily in this work.

\section{Skipping updates in the \emph{bottom-up} family of
  algorithms}\label{sec:kkt-skipping}

In this section, we describe how to use the KKT conditions to skip
updates in the \emph{bottom-up} algorithms. In this section, we write:
\begin{equation}
  A = \begin{bmatrix}
    -1 & & \\
    & \ddots & \\
    & & -1 \\
    1 & \cdots & 1
  \end{bmatrix} \in \mathbb{R}^{d + 1 \times d}, \qquad b = \begin{bmatrix}
    0 \\ \vdots \\ 0 \\ 1
  \end{bmatrix} \in \mathbb{R}^{d + 1}
\end{equation}
Using these, the set $\Delta^d$ can be written as a linear matrix
inequality:
\begin{equation}
  \lambda \in \Delta^d \iff A\lambda \leq b
\end{equation}
Let $\mu \in \mathbb{R}^{d + 1}$ be the vector of Lagrange
multipliers. Then, the Lagrangian function for eq.\@
\ref{eq:constrained-minimization} is:
\begin{equation}
  L(\lambda, \mu) = F(\lambda) + (A\lambda - b)^\top \mu.
\end{equation}
Since $F_0$ is strictly convex and since we assume $h$ is small enough
for $F_1$ to be strictly convex, if $\lambda^*$ lies on the boundary
of $\Delta^d$, we only need to check that the optimum Lagrange
multipliers $\mu^*$ are dual feasible; i.e., whether $\mu^* \geq 0$
(this follows directly from the standard KKT
conditions~\cite{bertsekas1999nonlinear,nocedal2006numerical}). For a
fixed $\lambda \in \Delta^d$, define the set of indices of active
constraints:
\begin{equation}
  \mathcal{I} = \set{i : (A\lambda - b)_i = 0}
\end{equation}
That is, $i \in \mathcal{I}$ if the $i$th inequality holds with
equality (``is active''). Stationarity then requires:
\begin{equation}\label{eq:stationarity}
  A^\top_{\mathcal{I}} \mu_{\mathcal{I}}^* = \nabla F_i(\lambda).
\end{equation}
If $i \notin \mathcal{I}$, we set $\mu_i^* = 0$. If $\mu^*_i \geq 0$
for all $i$, then the update may be skipped.

When implementing this, since $A$ is sparse, it is simplest and most
efficient to write out the system given by eq.\@ \ref{eq:stationarity}
and write a specialized function to solve it. Note that since we
always start with a lower-dimensional interior point solution lying on
the boundary of a higher-dimensional problem, we only have to compute
one Lagrange multiplier.

\section{Proofs for section\@
  \ref{ssec:minimization-problem}}\label{sec:minimization-proofs}

\begin{proof}[Proof of proposition \ref{prop:F0-grad-and-Hess}]
  For the gradient, we have:
  \begin{equation*}
    \nabla F_0(\lambda) = \delU + \frac{s^{\theta} h}{2 \|p_\lambda\|} \nabla p_\lambda^\top p_\lambda = \delU + \frac{s^{\theta} h}{\|p_\lambda\|} \delP^\top p_\lambda,
  \end{equation*}
  since
  $\nabla p_\lambda^\top p_\lambda = 2 \delP^\top
  p_\lambda$. For the Hessian:
  \begin{align*}
    \nabla^2_\lambda F_0(\lambda) &= \nabla \parens{\frac{s^{\theta} h}{\|p_\lambda\|} p_\lambda^\top \delP} = s^{\theta} h \parens{\nabla \frac{1}{\|p_\lambda\|} p_\lambda^\top \delP + \frac{1}{\|p_\lambda\|} \nabla p_\lambda^\top \delP} \\
    &= \frac{s^{\theta} h}{\|p_\lambda\|} \parens{\delP^\top \delP - \frac{\delP^\top p_\lambda p_\lambda^\top \delP}{p_\lambda^\top p_\lambda}} = \frac{s^{\theta} h}{\|p_\lambda\|} \delP^\top \parens{I - \frac{p_\lambda p_\lambda^\top}{p_\lambda^\top p_\lambda}} \delP,
  \end{align*}
  from which the result follows.
\end{proof}

\begin{proof}[Proof of proposition \ref{prop:F1-grad-and-Hess}]
  Since
  $F_1(\lambda) = u_\lambda + h s^{\theta}_\lambda \|p_\lambda\|$,
  for the gradient we have:
  \begin{equation*}
    \nabla F_1(\lambda) = \delU + h \parens{\theta \|p_\lambda\| \dels + \frac{s^{\theta}_\lambda}{2\|p_\lambda\|} \nabla p_\lambda^\top p_\lambda} = \delU + \frac{h}{\|p_\lambda\|} \parens{\theta p_\lambda^\top p_\lambda \dels + s^{\theta} \delP^\top p_\lambda},
  \end{equation*}
  and for the Hessian:
  \begin{equation*}
    \begin{aligned}
      \nabla^2 F_1(\lambda) = \frac{h}{2 \|p_\lambda\|} \Bigg(\theta \Big(\nabla p_\lambda^\top p_\lambda \dels^\top &+\; \dels {(\nabla p_\lambda^\top p_\lambda)}^\top\Big) \;+ \\
      &s^{\theta}_\lambda \parens{\frac{1}{2 p_\lambda^\top p_\lambda} \nabla p_\lambda^\top p_\lambda {(\nabla p_\lambda^\top p_\lambda)}^\top - \nabla^2_\lambda p_\lambda^\top p_\lambda} \Bigg).
    \end{aligned}
  \end{equation*}
  Simplifying this gives us the result.
\end{proof}

\begin{proof}[Proof of lemma \ref{lemma:dPt-cprojp-dP-pd}]

  Let $\nu_\lambda = p_\lambda/\|p_\lambda\| \in \mathbb{R}^n$ be the unit
  vector in the direction of $p_\lambda$, and assume that
  $Q = \begin{bmatrix} \nu_\lambda & U \end{bmatrix} \in \mathbb{R}^{n
    \times n}$ is orthonormal. Then:
  \begin{equation}
    \delP^\top \proj^\perp_{p_\lambda} \delP = \delP^\top {(I - \nu_\lambda \nu_\lambda^\top)} \delP = \delP^\top {(QQ^\top - \nu_\lambda \nu_\lambda^\top)} \delP = \delP^\top U U^\top \delP.
  \end{equation}
  Hence, $\delP^\top \proj^\perp \delP$ is a Gram matrix
  and positive semidefinite.

  Next, since $\Delta^n$ is nondegenerate, the vectors $p_i$ for
  $i = 0, \hdots, n - 1$ are linearly independent. Since the $i$th
  column of $\delP$ is $\delp_i = p_i - p_0$, we can see that
  the vector $p_0$ is not in the range of $\delP$; hence, there is
  no vector $\mu$ such that $\delP \mu = \alpha p_\lambda$, for any
  $\alpha \neq 0$. What's more, by definition,
  $\text{ker}(\proj_{p_\lambda}^\perp) = \langle p_\lambda
  \rangle$. So, we can see that
  $\proj^\perp_{p_\lambda} \delP \mu = 0$ only if $\mu = 0$,
  from which we can conclude
  $\delP^\top \proj^\perp_{p_\lambda} \delP \succ
  0$. Altogether, bearing in mind that $\smin$ is assumed to be
  positive, we conclude that $\nabla^2 F_0$ is positive definite.
\end{proof}

\begin{proof}[Proof of lemma \ref{lemma:F-strictly-convex}]
  To show that $\nabla^2 F_1$ is positive definite for $h$ small
  enough, note from eq.\@ \ref{eq:hess-F1} that
  $\nabla^2 F_1 = A + B$, where $A$ is positive definite and $B$ is
  small relative to $A$ and indefinite. To use this fact, note that
  since $\delP^\top \proj^\perp_\lambda \delP$ is symmetric positive
  definite, it has an eigenvalue decomposition $Q \Lambda Q^\top$
  where $\Lambda_{ii} > 0$ for all $i$. Since
  $\delP^\top \proj^\perp_\lambda \delP$ doesn't depend on $h$, for a
  fixed set of vectors $p_0, \hdots, p_n$, its eigenvalues are
  constant with respect to $h$. So, defining:
  \begin{equation}
    A = \frac{s^\theta_\lambda h}{\|p_\lambda\|} \delP^\top \proj^\perp_\lambda \delP = Q \parens{\frac{s^\theta_\lambda h}{\|p_\lambda\|} \Lambda} Q^\top
  \end{equation}
  we can expect this matrix's eigenvalues to be $\Theta(h)$; in
  particular, $\lambda_{\min} \geq C h$ for some constant $C$,
  provided that $s > \smin > 0$, as assumed. This gives us a bound for
  the positive definite part of $\nabla F_1^2$.

  The perturbation $B = \set{\delP^\top \nu_\lambda, \theta h \dels}$
  is indefinite. Since $\norm{\dels} = O(h)$, we find that:
  \begin{equation}
    |\lambda_{\max}(B)| = \norm{\anticom{\delP^\top \nu_\lambda,
        \theta h \dels}}_2 \leq \theta h \sqrt{n} \norm{\anticom{\delP^\top \nu_\lambda, \dels}}_\infty = O(h^2),
  \end{equation}
  where we use the fact that the Lipschitz constant of $s$ is
  $K \leq C$, so that:
  \begin{equation}
    |\dels_i| = |s_i - s_0| \leq K |x_i - x_0| \leq K h \sqrt{n}
    \leq Ch \sqrt{n},
  \end{equation}
  for each $i$. Letting $z \neq 0$, we compute:
  \begin{equation}
    z^\top \nabla^2 F_1 z = z^\top A z + z^\top B z \geq \lambda_{\min}(A) z^\top z + z^\top B z \geq Ch z^\top z + z^\top B z.
  \end{equation}
  Now, since
  $\abs{z^\top B z} \leq \abs{\lambda_{\max}(B)} z^\top z \leq D h^2
  z^\top z$, where $D$ is some positive constant, we can see that for
  $h$ small enough, it must be the case that
  $Ch z^\top z + z^\top B z > 0$; i.e., that $\nabla^2 F_1$ is
  positive definite; consequently, $F_1$ is strictly convex in this case.
\end{proof}

\section{Proofs for
  section\@ \ref{ssec:validation}}\label{app:validation-proofs} In this section, we establish some technical lemmas that we will use
to validate the use of \texttt{mp0}. Lemmas
\ref{lemma:bounded-inv-hess-F1}, \ref{lemma:bounded-first-step}, and
\ref{lemma:hess-F1-lipschitz} set up the conditions for theorem
\ref{thm:stoer-bulirsch} of Stoer and
Bulirsch~\cite{stoer2013introduction}, from which theorem
\ref{thm:mp0-newton} readily follows.

\begin{lemma}\label{lemma:bounded-inv-hess-F1}
  There exists $\beta = O(h^{-1})$ s.t.
  $\norm{\nabla^2 F_1(\lambda)^{-1}} \leq \beta$ for all
  $\lambda \in \Delta^n$.
\end{lemma}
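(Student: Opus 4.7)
The plan is to decompose $\nabla^2 F_1(\lambda)$ exactly as in the proof of lemma \ref{lemma:F-strictly-convex} and then bound its smallest eigenvalue from below by a quantity of order $h$, uniformly in $\lambda \in \Delta^n$. Writing $\nabla^2 F_1(\lambda) = A(\lambda) + B(\lambda)$ with
\[
A(\lambda) = \frac{s^\theta_\lambda h}{\|p_\lambda\|}\, \delta P^\top \Pi^\perp_{p_\lambda} \delta P, \qquad B(\lambda) = \anticom{\delta P^\top \nu_\lambda,\; \theta h\, \delta s},
\]
the conclusion will follow from Weyl's inequality once I show $\lambda_{\min}(A(\lambda)) \geq c_1 h$ and $\|B(\lambda)\|_2 \leq c_2 h^2$, with constants $c_1,c_2 > 0$ independent of $\lambda$ and $h$.

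First I would record the uniform geometric bounds on $\Delta^n$. Since $\hat p = 0$ lies outside the base of the simplex (by nondegeneracy) and $\Delta^n$ is compact, $\|p_\lambda\|$ is pinched between two positive constants independent of $\lambda$. Likewise, assuming $s \geq s_{\min} > 0$, the scalar $s^\theta_\lambda$ is uniformly bounded below by a positive constant. For the lower bound on $A(\lambda)$, the key point is that $\delta P^\top \Pi^\perp_{p_\lambda} \delta P$ is symmetric positive definite by (essentially) the argument in the proof of lemma \ref{lemma:dPt-cprojp-dP-pd}, and $\lambda \mapsto \lambda_{\min}(\delta P^\top \Pi^\perp_{p_\lambda} \delta P)$ is continuous on the compact set $\Delta^n$. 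Hence it attains a positive minimum $\gamma>0$, giving $\lambda_{\min}(A(\lambda)) \geq c_1 h$.

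Next I would estimate $B(\lambda)$. Since $s$ is Lipschitz with constant independent of $h$ and grid neighbors are separated by distance $O(h)$, $\|\delta s\| = O(h)$; the remaining factor $\delta P^\top \nu_\lambda$ has entries that depend only on the (rescaled, $O(1)$) simplex geometry and on $\nu_\lambda$, and is therefore uniformly bounded. The anticommutator then satisfies $\|B(\lambda)\|_2 \leq c_2 h^2$ uniformly in $\lambda$. Combining via Weyl's inequality,
\[
\lambda_{\min}\!\bigl(\nabla^2 F_1(\lambda)\bigr) \;\geq\; \lambda_{\min}(A(\lambda)) - \|B(\lambda)\|_2 \;\geq\; c_1 h - c_2 h^2 \;\geq\; \tfrac{1}{2} c_1 h
\]
for $h$ small enough, from which
\[
\|\nabla^2 F_1(\lambda)^{-1}\|_2 \;=\; \frac{1}{\lambda_{\min}(\nabla^2 F_1(\lambda))} \;\leq\; \frac{2}{c_1 h} \;=\; O(h^{-1}),
\]
yielding the desired $\beta$.

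The main obstacle is the uniformity-in-$\lambda$ of the lower eigenvalue bound for $\delta P^\top \Pi^\perp_{p_\lambda} \delta P$: I must argue that this matrix is positive definite not only at a single $\lambda$ (as done in lemma \ref{lemma:dPt-cprojp-dP-pd}) but uniformly over the whole simplex $\Delta^n$. Continuity plus compactness handles this, but one must also confirm that the argument of lemma \ref{lemma:dPt-cprojp-dP-pd} goes through for every $\lambda \in \Delta^n$, i.e., that $p_0$ (equivalently $p_\lambda$ for any $\lambda \in \Delta^n$) is never in the range of $\delta P$; this is immediate from the nondegeneracy assumption on the update simplex together with $\hat p = 0$ lying off the affine hull of $p_0,\dots,p_d$.
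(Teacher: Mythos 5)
Your proposal is correct, and it starts from exactly the same decomposition the paper uses: $\nabla^2 F_1(\lambda) = A(\lambda) + B(\lambda)$ with $A$ the positive definite part of size $\Theta(h)$ and $B$ the indefinite anticommutator term of size $O(h^2)$. Where you diverge is only in the final inversion step. The paper inverts $A+B$ via a Neumann series: it shows $\norm{A^{-1}B} = O(h) < 1$ for small $h$ and expands $(A+B)^{-1} = (I + A^{-1}B)^{-1}A^{-1}$, reading off $\norm{(A+B)^{-1}} = O(\norm{A^{-1}}) = O(h^{-1})$. You instead exploit symmetry: Weyl's inequality gives $\lambda_{\min}(\nabla^2 F_1) \geq \lambda_{\min}(A) - \norm{B}_2 \geq \tfrac{1}{2}c_1 h$, and for a symmetric positive definite matrix the operator norm of the inverse is the reciprocal of the smallest eigenvalue. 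Both routes are valid and of comparable length; yours buys two small things. First, it is explicit about uniformity in $\lambda$ (continuity of $\lambda \mapsto \lambda_{\min}(\delta P^\top \Pi^\perp_{p_\lambda}\delta P)$ on the compact set $\Delta^n$, plus the observation that the nondegeneracy argument of lemma \ref{lemma:dPt-cprojp-dP-pd} applies at every $\lambda$ since $p_\lambda = p_0 + \delta P\lambda$ never lies in the range of $\delta P$), whereas the paper handles uniformity implicitly by taking a maximum over $\Delta^n$ at the end. Second, your eigenvalue bound simultaneously re-establishes positive definiteness of the Hessian, which the Neumann argument does not need but also does not deliver. The paper's route has the mild advantage of not relying on symmetry and so would generalize to nonsymmetric perturbations; here that generality is not needed.
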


\begin{proof}[Proof of lemma \ref{lemma:bounded-inv-hess-F1}]
  To simplify eq.\@ \ref{eq:hess-F1}, we temporarily define:
  \begin{equation}
    A = \frac{s^\theta_\lambda h}{\|p_\lambda\|} \delP^\top \proj^\perp_\lambda \delP \mbox{ and } B = \frac{\theta h}{\|p_\lambda\|} \anticom{\delP^\top p_\lambda, \dels}.
  \end{equation}
  Observe that $\|A\| = O(h)$ and $\|B\| = O(h^2)$, since
  $\|\delta s\| = O(h)$ and since all other factors involved in $A$
  and $B$ (excluding $h$ itself) are independent of $h$. Hence:
  \begin{equation}
    \norm{A^{-1} B} = \frac{\theta}{s^\theta_\lambda} \norm{\parens{\delP^\top \proj^\perp_\lambda \delP}^{-1} \anticom{\delP^\top p_\lambda, \dels}} = O(h),
  \end{equation}
  since $\norm{\dels} = O(h)$. Hence, $\norm{A^{-1} B} < 1$ for $h$
  small enough, and we can Taylor expand:
  \begin{equation}
    \begin{aligned}
      \nabla^2 F_1(\lambda)^{-1} &= \parens{A + B}^{-1} = {(I + A^{-1} B)}^{-1} A^{-1} \\
      &= \parens{I - A^{-1} B + {(A^{-1} B)}^2 - \cdots} A^{-1} \\
      &= A^{-1} - A^{-1} B A^{-1} + {(A^{-1} B)}^2 A^{-1} - \,\cdots,
    \end{aligned}
  \end{equation}
  which implies $\norm{\nabla^2 F_1(\lambda)^{-1}} = O(h^{-1})$. Note
  that when we Taylor expand, $\|A^{-1} B\| = O(h)$, so that
  $\|A^{-1} B\| < 1$ for $h$ small enough. To define $\beta$, let:
  \begin{equation}
    \beta = \max_{\lambda \in \Delta^n} \norm{\nabla^2 F_1(\lambda)^{-1}} = O(h^{-1}),
  \end{equation}
  completing the proof.
\end{proof}

\begin{lemma}\label{lemma:bounded-first-step}
  There exists $\alpha = O(h)$ s.t.
  $\norm{\nabla^2 F_1(\lambda_{0}^*)^{-1} \nabla F_1(\lambda_{0}^*)}
  \leq \alpha$.
\end{lemma}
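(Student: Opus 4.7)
The plan is to exploit that $\lambda_0^*$ is the unconstrained minimizer of $F_0$, so $\nabla F_0(\lambda_0^*) = 0$, and to compare $\nabla F_1$ with $\nabla F_0$ at this common point. Combining proposition \ref{prop:F0-grad-and-Hess} and proposition \ref{prop:F1-grad-and-Hess}, one has the identity
\begin{equation}
  \nabla F_1(\lambda) = \nabla F_0(\lambda) + \bigl(s^\theta_\lambda - s^\theta\bigr)\, h\, \delP^\top \nu_\lambda + \theta h \|p_\lambda\| \dels .
\end{equation}
Evaluating at $\lambda_0^*$, the first summand on the right vanishes by optimality, leaving only the two perturbation terms. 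So the task reduces to bounding each of these, and then multiplying by the bound from lemma \ref{lemma:bounded-inv-hess-F1}.

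Next I would bound each term using Lipschitz continuity of $s$ together with the convention (section \ref{ssec:quadrature}) that the update simplex has been rescaled to unit-infinity-norm vertices. Under that convention, $\|p_\lambda\|$, $\|\nu_\lambda\|$, and $\|\delP^\top\nu_\lambda\|$ are all $O(1)$, independent of $h$, while $\dels_i = s(x_i)-s(x_0)$ satisfies $|\dels_i| \le K h\sqrt{n}$ by the hypothesis $K\le C$, so $\|\dels\| = O(h)$. The quantity $s^\theta_{\lambda_0^*} - s^\theta = \theta\bigl(s_{\lambda_0^*} - \tfrac{1}{d+1}\sum_{i=0}^d s_i\bigr)$ is the difference of two convex combinations of the values $s_0,\dots,s_d$, all sampled at points within an $O(h)$ neighborhood, so it too is $O(h)$ by Lipschitz continuity. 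Consequently both $\bigl(s^\theta_{\lambda_0^*} - s^\theta\bigr)h\delP^\top\nu_{\lambda_0^*}$ and $\theta h\|p_{\lambda_0^*}\|\dels$ are $O(h^2)$, and therefore $\|\nabla F_1(\lambda_0^*)\| = O(h^2)$.

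To conclude, I would apply the submultiplicative bound together with lemma \ref{lemma:bounded-inv-hess-F1}:
\begin{equation}
  \bigl\| \nabla^2 F_1(\lambda_0^*)^{-1} \nabla F_1(\lambda_0^*) \bigr\| \;\le\; \bigl\| \nabla^2 F_1(\lambda_0^*)^{-1} \bigr\| \cdot \bigl\| \nabla F_1(\lambda_0^*) \bigr\| \;=\; O(h^{-1}) \cdot O(h^2) \;=\; O(h),
\end{equation}
and define $\alpha$ to be this final bound. The only mildly subtle step is the first one: one must be careful to use $\nabla F_0(\lambda_0^*) = 0$ (which is legitimate because theorem \ref{thm:f0-exact} and lemma \ref{lemma:dPt-cprojp-dP-pd} together say $\lambda_0^*$ is an unconstrained critical point of a strictly convex function). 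All other estimates are routine consequences of Lipschitz continuity combined with the scaling convention that each update simplex has vertices of unit $\ell_\infty$-norm, so that the only $h$-dependence enters through the explicit factors of $h$ in the cost function and through differences of $s$ at nearby grid points.
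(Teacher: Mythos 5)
Your proposal is correct and follows essentially the same route as the paper: both arguments write $\nabla F_1(\lambda_0^*)$ as $\nabla F_0(\lambda_0^*)$ (which vanishes by unconstrained optimality and strict convexity of $F_0$) plus perturbation terms of size $O(h^2)$ coming from $\norm{\dels} = O(h)$ and the $O(h)$ difference $s^\theta_\lambda - s^\theta$, and then combine this with the $O(h^{-1})$ bound of lemma \ref{lemma:bounded-inv-hess-F1}. The paper expresses $s^\theta_\lambda - s^\theta$ as $\dels^\top(\lambda - \underline{\lambda})$ via the simplex centroid rather than as a difference of convex combinations, but this is the same estimate.
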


\begin{proof}[Proof of lemma \ref{lemma:bounded-first-step}]
  From lemma \ref{lemma:bounded-inv-hess-F1} we have
  $\norm{F_1(\lambda_0^*)^{-1}} = O(h^{-1})$, so to establish the
  result we only need to show that
  $\norm{\nabla F_1(\lambda_0^*)} = O(h^2)$. To this end, let
  $\underline{\lambda} = {(n + 1)}^{-1} \m{1}_{n \times 1}$ (i.e., the
  centroid of $\Delta^n$, where $s^\theta$ is evaluated). Then,
  recalling figure \ref{fig:simplex-diagrams},
  $s^\theta_\lambda = s^\theta + \dels^\top (\lambda -
  \underline{\lambda})$ so that, for a general $\lambda$:
  \begin{equation}\label{eq:grad-F1-in-terms-of-grad-F0}
    \begin{aligned}
      \nabla F_1(\lambda) &= \|p_\lambda\| h \dels + \delU + \frac{s^\theta + \dels^\top (\lambda - \underline{\lambda})}{\|p_\lambda\|} h \delP^\top p_\lambda \\
      &= \|p_\lambda\| h \dels + \nabla F_0(\lambda) + \frac{\dels^\top {(\lambda - \underline{\lambda})}}{\|p_\lambda\|} h \delP^\top p_\lambda.
    \end{aligned}
  \end{equation}
  Since $\nabla F_0(\lambda_0^*) = 0$ by optimality, we can conclude
  using eq.\@ \ref{eq:grad-F1-in-terms-of-grad-F0} and
  $\norm{\dels} = O(h)$ that:
  \begin{equation}
    \norm{\nabla F_1(\lambda_0^*)} = h \norm{\|p_{\lambda_0^*}\| \dels + \frac{\dels^\top {(\lambda - \underline{\lambda})}}{\|p_{\lambda_0^*}\|} \delP^\top p_\lambda} = O(h^2),
  \end{equation}
  which proves the result.
\end{proof}

\begin{lemma}\label{lemma:hess-F1-lipschitz}
  The Hessian $\nabla^2 F_1$ is Lipschitz continuous with $O(h)$
  Lipschitz constant. That is, there is some constant $\gamma = O(h)$
  so that for two points $\lambda$ and $\lambda'$:
  \begin{align*}
    \norm{\nabla^2 F_1(\lambda) - \nabla^2 F_1(\lambda')} \leq \gamma \norm{\lambda - \lambda'}.
  \end{align*}
\end{lemma}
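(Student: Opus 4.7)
My plan is to split $\nabla^2 F_1$ into its two natural pieces from eq.\@ \ref{eq:hess-F1} and to bound the Jacobian in $\lambda$ of each piece uniformly on the convex compact set $\Delta^n$. Once each piece has a Jacobian of order $h$, the mean value theorem applied on the convex set $\Delta^n$ immediately yields the Lipschitz bound $\gamma = O(h)$.

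Concretely, I would write $\nabla^2 F_1(\lambda) = A(\lambda) + B(\lambda)$, where
\[
A(\lambda) = \set{\delP^\top \nu_\lambda, \theta h \dels}, \qquad B(\lambda) = \frac{s^\theta_\lambda h}{\|p_\lambda\|} \delP^\top \proj^\perp_{p_\lambda} \delP,
\]
and handle the two terms separately. For $A(\lambda)$, the factor $\theta h \dels$ is constant in $\lambda$ and has norm $O(h^2)$, since $\|\dels\| = O(h)$ by the Lipschitz continuity of $s$ on a simplex of diameter $O(h)$, exactly as in the proof of lemma \ref{lemma:F-strictly-convex}. The remaining factor $\delP^\top \nu_\lambda$ is a smooth $\R^n$-valued function of $\lambda$ whose norm and Jacobian are $O(1)$: this uses compactness of $\Delta^n$, the $O(1)$ scaling of the $p_i$ fixed in section \ref{ssec:quadrature}, and a uniform positive lower bound on $\|p_\lambda\|$ coming from the fact that $\hat p = 0$ is separated from the base simplex. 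Hence $A$ is in fact $O(h^2)$-Lipschitz. For $B(\lambda)$, the explicit factor $h$ pulls out, and the remaining rational expression $s^\theta_\lambda \|p_\lambda\|^{-1} \delP^\top \proj^\perp_{p_\lambda} \delP$ in $\lambda$ has $O(1)$ Jacobian on $\Delta^n$ by the same geometric considerations, together with the easy bounds $|s^\theta_\lambda| = O(1)$ and $\|\nabla_\lambda s^\theta_\lambda\| = \theta \|\dels\| = O(h)$. So $B$ is $O(h)$-Lipschitz.

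Summing the two contributions yields $\gamma = O(h) + O(h^2) = O(h)$, as desired. The only step that requires some care, and is really the whole content of the lemma, is checking that the bounds on the geometric quantities $\nu_\lambda$, $\|p_\lambda\|^{-1}$, and $\proj^\perp_{p_\lambda}$, as well as their $\lambda$-derivatives, are genuinely $h$-independent. This is legitimate because the rescaling in section \ref{ssec:quadrature} pins the vertices $p_i$ to lattice positions of unit $\ell_\infty$-size while keeping $\hat p = 0$ separated from the convex hull of $\set{p_0, \ldots, p_n}$ by a constant depending only on the simplex type. With those uniform bounds in hand, the rest of the estimate is a routine term-by-term computation using the product and quotient rules.
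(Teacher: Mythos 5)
Your proposal is correct and follows essentially the same route as the paper's proof: the same decomposition of $\nabla^2 F_1$ from eq.\@ \ref{eq:hess-F1} into the anticommutator term (shown to be $O(h^2)$-Lipschitz via $\|\dels\| = O(h)$) and the projector term (shown to be $O(h)$-Lipschitz by pulling out the explicit factor of $h$ and bounding the remaining $\lambda$-dependent factors uniformly on $\Delta^n$). The only difference is cosmetic: you justify the $O(1)$ Lipschitz constants of the geometric factors by bounding their Jacobians and invoking the mean value theorem on the convex compact set $\Delta^n$, which the paper asserts more briefly.
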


\begin{proof}[Proof of lemma \ref{lemma:hess-F1-lipschitz}]
  If we restrict our attention to $\Delta^n$, we see that
  $\|p_\lambda\|^{-1} \delP^\top \proj_\lambda^\perp \delP$ is
  Lipschitz continuous function of $\lambda$ with $O(1)$ Lipschitz
  constant and $\theta \{{\delP}^\top p_\lambda, \dels\} /\|p_{\lambda}\|$
  is Lipschitz continuous with $O(h)$ Lipschitz constant since
  $\|\delta s\| = O(h)$. Then, since $s^\theta_\lambda$ is $O(1)$
  Lipschitz, it follows that:
  \begin{equation}
    A(\lambda) = \tfrac{s^\theta_\lambda h}{\|p_\lambda\|} \delP^\top
    \proj^\perp_\lambda \delP
  \end{equation}
  has a Lipschitz constant that is $O(h)$ for $\lambda \in \Delta^n$,
  using the notation of lemma
  \ref{lemma:bounded-inv-hess-F1}. Likewise,
  \begin{equation}
    B(\lambda) = \tfrac{\theta h}{\|p_\lambda\|} \anticom{\delP^\top
      p_\lambda, \dels} = O(h^2),
  \end{equation}
  since it is a sum of two terms involving products of $h$ and
  $\delta s$. Since $\nabla^2 F_1(\lambda) = A(\lambda) + B(\lambda)$,
  we can see immediately that it is also Lipschitz on $\Delta^n$ with
  a constant that is $O(h)$.
\end{proof}

\begin{proof}[Proof of theorem \ref{thm:mp0-newton}]
  Our proof of theorem \ref{thm:mp0-newton} relies on the following
  theorem on the convergence of Newton's method, which we present for
  convenience.

  \begin{theorem}[Theorem 5.3.2, Stoer and Bulirsch]\label{thm:stoer-bulirsch}
    Let $C \subseteq \R^n$ be an open set, let $C_0$ be a convex set
    with $\overline{C}_0 \subseteq C$, and let
    $f : C \to \mathbb{R}^n$ be differentiable for $x \in C_0$ and
    continuous for $x \in C$. For $x_0 \in C_0$, let
    $r, \alpha, \beta, \gamma$ satisfy
    $S_r(x_0) = \set{x : \norm{x - x_0} < r} \subseteq C_0$,
    $\mu = \alpha\beta\gamma < 2$, $r = \alpha(1 - \mu)^{-1}$, and let
    $f$ satisfy:
    \begin{enumerate}[label=(\alph*)]
    \item for all $x, y \in C_0$,
      $\norm{D f(x) - D f(y)} \leq \gamma \norm{x - y}$,
    \item for all $x \in C_0$, $(D f(x))^{-1}$ exists and satisfies
      $\norm{(Df(x))^{-1}} \leq \beta$,
    \item and $\norm{(Df(x_0))^{-1} f(x_0)} \leq \alpha$.
    \end{enumerate}
    Then, beginning at $x_0$, each iterate:
    \begin{equation}
      x_{k+1} = x_k - Df(x_k)^{-1} f(x_k), \qquad k = 0, 1, \hdots,
    \end{equation}
    is well-defined and satisfies $\norm{x_k - x_0} < r$ for all
    $k \geq 0$. Furthermore, $\lim_{k \to \infty} x_k = \xi$ exists and
    satisfies $\norm{\xi - x_0} \leq r$ and $f(\xi) = 0$.
  \end{theorem}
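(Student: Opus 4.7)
The plan is to prove this via the standard Newton–Kantorovich induction: derive a one-step Taylor-remainder estimate showing the residual shrinks quadratically, unroll this into a superlinear recursion for the step sizes $h_k = \norm{x_{k+1}-x_k}$, and use the summability of those steps both to keep the iterates inside $S_r(x_0)$ (so hypotheses (a) and (b) remain in force) and to establish Cauchy convergence. Concretely, assuming inductively that $x_k \in C_0$ and that $Df(x_k)^{-1}$ exists, the Newton update $x_{k+1} = x_k - Df(x_k)^{-1} f(x_k)$ satisfies $Df(x_k)(x_{k+1}-x_k) = -f(x_k)$, so
\begin{equation*}
f(x_{k+1}) \;=\; f(x_{k+1}) - f(x_k) - Df(x_k)(x_{k+1}-x_k) \;=\; \int_0^1 \bigl[Df(x_k + t(x_{k+1}-x_k)) - Df(x_k)\bigr](x_{k+1}-x_k)\,dt.
\end{equation*}
Hypothesis (a) then gives $\norm{f(x_{k+1})} \leq \frac{\gamma}{2}\,h_k^2$, and composing with hypothesis (b) yields the quadratic recursion $h_{k+1} \leq \frac{\beta\gamma}{2}\,h_k^2$. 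With $h_0 \leq \alpha$ by (c) and $\mu = \alpha\beta\gamma$, this unrolls to $h_k \leq \alpha\,(\mu/2)^{2^k-1}$.

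Next I would close the induction and extract convergence. Assume that $x_0,\ldots,x_k \in S_r(x_0) \subseteq C_0$ and that each preceding Newton step has been taken legally. By the triangle inequality,
\begin{equation*}
\norm{x_{k+1}-x_0} \;\leq\; \sum_{j=0}^{k} h_j \;\leq\; \alpha \sum_{j \geq 0} (\mu/2)^{2^j-1}.
\end{equation*}
Since $2^j - 1 \geq j$ for $j \geq 0$, this is majorized by the geometric series $\alpha\sum_{j\geq 0}(\mu/2)^j = \alpha/(1-\mu/2)$, which under $\mu < 2$ is finite and is bounded by the stated radius $r = \alpha(1-\mu)^{-1}$. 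Hence $x_{k+1} \in S_r(x_0) \subseteq C_0$, so hypotheses (a)–(b) continue to apply at $x_{k+1}$ and the induction closes. The quadratic decay of $h_k$ makes $\{x_k\}$ Cauchy in $\R^n$, so $x_k \to \xi$ with $\norm{\xi-x_0} \leq r$. Finally, $\norm{f(x_k)} = \norm{Df(x_k)(x_{k+1}-x_k)} \leq \norm{Df(x_k)}\,h_k \to 0$ (using that $\norm{Df}$ is bounded on the bounded set $\overline{S_r(x_0)}$ by hypothesis (a)), and continuity of $f$ at $\xi \in \overline{C}_0 \subseteq C$ forces $f(\xi) = 0$.

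The hard part is matching the exact quantitative form of $r$. The natural bound from the quadratic recursion is the double-exponential series $\sum(\mu/2)^{2^j - 1}$, and showing this is dominated by $\alpha(1-\mu)^{-1}$ requires the elementary but careful majorization $(\mu/2)^{2^j - 1} \leq (\mu/2)^j$, valid for $\mu/2 \leq 1$. The rest of the proof—invertibility of $Df(x_k)$ along the orbit (immediate from (b) once one knows $x_k \in C_0$), Cauchy convergence (immediate from summability of $h_k$), and vanishing of $f(\xi)$ (continuity)—is routine once the induction that the iterates never leave $S_r(x_0)$ has been closed.
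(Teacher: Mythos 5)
The paper does not prove this statement at all: it is quoted verbatim as Theorem 5.3.2 of Stoer and Bulirsch, with a citation standing in for the proof, so your proposal must be judged on its own merits---and it essentially succeeds. Your argument is the standard Newton--Kantorovich induction, and the key estimates are right: the integral Taylor-remainder identity plus hypothesis (a) gives $\norm{f(x_{k+1})} \leq \tfrac{\gamma}{2} h_k^2$, hypothesis (b) converts this to $h_{k+1} \leq \tfrac{\beta\gamma}{2} h_k^2$, unrolling with $h_0 \leq \alpha$ (hypothesis (c)) gives $h_k \leq \alpha (\mu/2)^{2^k - 1}$, and the majorization $(\mu/2)^{2^j - 1} \leq (\mu/2)^j$ bounds the total displacement by $\alpha/(1 - \mu/2)$, keeping the iterates in $S_r(x_0) \subseteq C_0$ so that (a)--(b) remain in force and the segments $[x_k, x_{k+1}]$ stay in the convex set $C_0$. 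The Cauchy argument and the deduction $f(\xi) = 0$ from continuity of $f$ on $C \supseteq \overline{C}_0$ are also correct.

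One substantive point: the radius. Your series bound produces $\alpha/(1 - \mu/2)$, and that---not the $r = \alpha(1-\mu)^{-1}$ printed in the statement---is the radius in Stoer and Bulirsch; observe that $\alpha(1-\mu)^{-1}$ is undefined or negative for $1 \leq \mu < 2$, so the printed formula is incompatible with the stated hypothesis $\mu < 2$ and is evidently a transcription slip, confirmed by the fact that the paper itself uses $r = \alpha/(1 - \alpha\beta\gamma/2)$ when invoking the theorem in the proof of theorem \ref{thm:mp0-newton}. Your closing claim that your bound ``is bounded by the stated radius $\alpha(1-\mu)^{-1}$'' holds only for $\mu < 1$ and should be replaced by the observation that your bound \emph{is} the correct radius. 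Two minor finishing touches: strict containment $\norm{x_{k+1} - x_0} < r$ follows because every partial geometric sum is strictly below $1/(1-\mu/2)$ (for $\mu > 0$; the case $\mu = 0$ is trivial), and the induction should be ordered so that the Taylor estimate on $[x_k, x_{k+1}]$ is invoked only after $x_{k+1} \in C_0$ has been secured from the bounds on $h_0, \hdots, h_k$---your sketch states the recursion before closing that membership step, though you do flag the issue.
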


  For our situation, Theorem 5.3.2 of Stoer and
  Bulirsch~\cite{stoer2013introduction} indicates that if:
  \begin{align}
    \|\nabla F_1(\lambda)^{-1}\| &\leq \beta, \mbox{where } \beta = O(h^{-1}) \label{enum:sb-newton-1}, \\
    \|\nabla F_1(\lambda_0^*)^{-1} \nabla F_1(\lambda_0^*)\| &\leq \alpha, \mbox{where } \alpha = O(h), \label{enum:sb-newton-2} \mbox{ and} \\
    \|\nabla F_1(\lambda) - \nabla F_1(\lambda')\| &\leq \gamma \norm{\lambda - \lambda'} \mbox{ for each } \lambda, \lambda' \in \Delta^n, \mbox{where } \gamma = O(h), \label{enum:sb-newton-3}
  \end{align}
  then with $\lambda_0 = \lambda_0^*$, the iteration eq.\@
  \ref{eq:lam0-iter-to-lam1} is well-defined, with each iterate
  satisfying $\norm{\lambda_k - \lambda_0} \leq r$, where
  $r = \alpha/(1 - \alpha\beta\gamma/2)$. Additionally, the limit of
  this iteration exists, and the iteration converges to it
  quadratically; we note that since $F_1$ is strictly convex for $h$
  small enough, the limit of the iteration must be $\lambda_1^*$, so
  the theorem also gives us
  $\norm{\dellam^*} = \norm{\lambda_1^* - \lambda_0^*} \leq r$.

  Now, we note that items \ref{enum:sb-newton-1},
  \ref{enum:sb-newton-2}, and \ref{enum:sb-newton-3} correspond
  exactly to lemma \ref{lemma:bounded-inv-hess-F1},
  \ref{lemma:bounded-first-step}, and \ref{lemma:hess-F1-lipschitz},
  respectively which gave us values for $\alpha, \beta$, and
  $\gamma$. All that remains is to compute $r$. Since the preceding
  lemmas imply $\alpha\beta\gamma = O(h)$, hence
  $\alpha\beta\gamma/2 < 1$ for $h$ small enough. We have:
  \begin{equation}
    r = \frac{\alpha}{1 - \frac{\alpha\beta\gamma}{2}} = \alpha \parens{1 + \frac{\alpha\beta\gamma}{2} + \frac{\alpha^2\beta^2\gamma^2}{4} + \cdots} = O(h),
  \end{equation}
  so that $\norm{\dellam^*} = O(h)$, and the result follows.

  To obtain the $O(h^3)$ error bound, from theorem
  \ref{thm:mp0-newton}, we have $\norm{\dellam^*} = O(h)$. Then,
  Taylor expanding $F_1(\lambda_0^*)$, we get:
  \begin{equation*}
    F_1(\lambda_0^*)
    = F_1(\lambda_1^* - \dellam^*) = F_1(\lambda_1^*) - \nabla F_1(\lambda_1^*)^\top \dellam^* + \frac{1}{2} \dellam^* \nabla F_1^2(\lambda_1^*) \dellam^* + R,
  \end{equation*}
  where $\abs{R} = O(\norm{\dellam^*}^3)$. Since $\lambda_1^*$
  is optimum, $\nabla F_1(\lambda_1^*) = 0$. Hence:
  \begin{equation*}
    \abs{F_1(\lambda_1^*) - F_1(\lambda_0^*)} \leq \frac{1}{2} \norm{\nabla F_1^2(\lambda_1^*)} \norm{\dellam^*}^2 + O(\norm{\dellam^*}^3) = O(h^3),
  \end{equation*}
  which proves the result.
\end{proof}

\section{Proofs for section\@
  \ref{ssec:exact-soln}}\label{sec:exact-soln-proofs}

\begin{figure}
  \vspace{1em}
  \centering \includegraphics[width=0.8\linewidth]{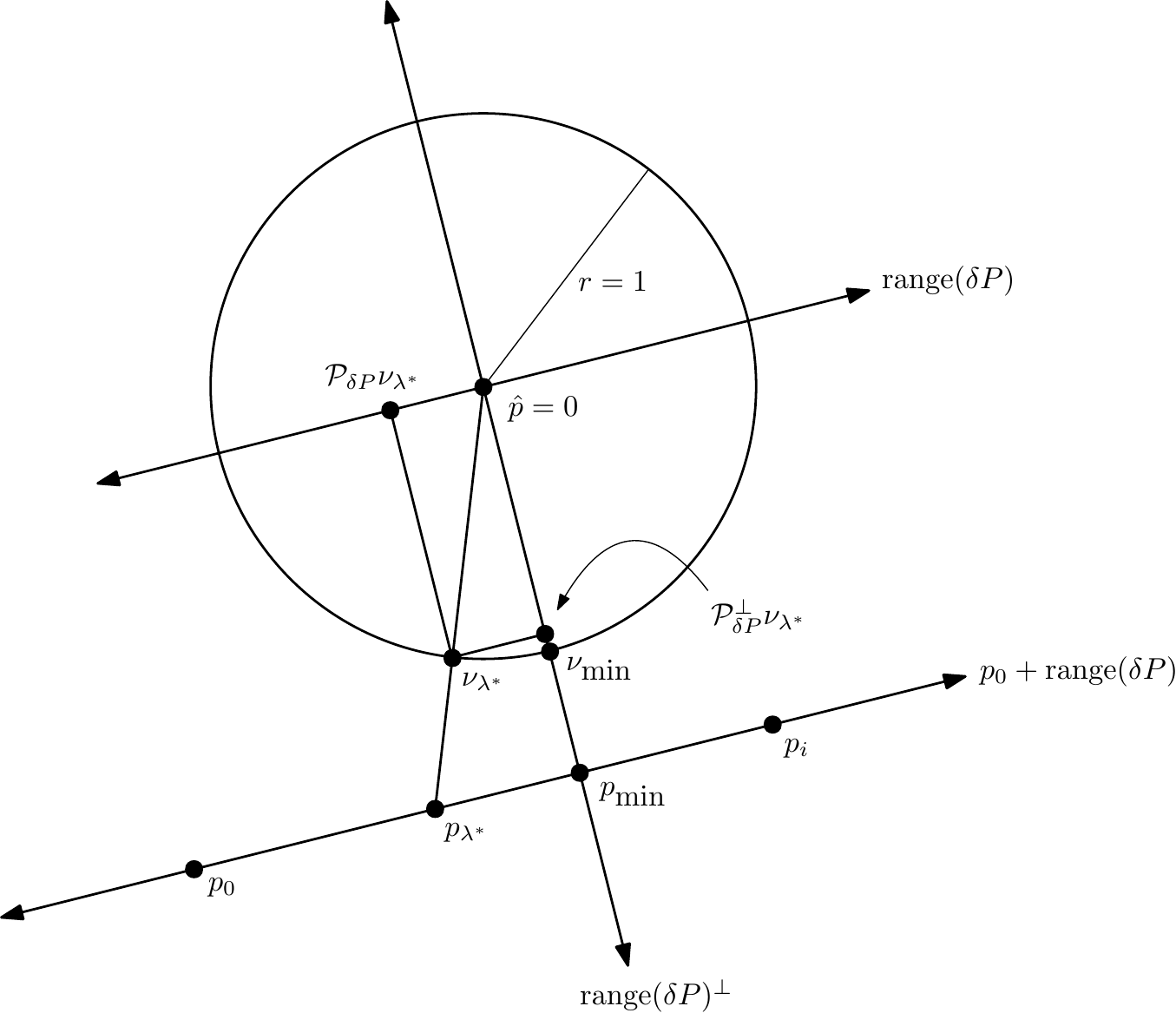}
  \caption{A schematic depiction of the proof of theorem
    \ref{thm:f0-exact}.}\label{fig:f0-exact}
\end{figure}

\begin{proof}[Proof of theorem \ref{thm:f0-exact}]
  We proceed by reasoning geometrically; figure \ref{fig:f0-exact}
  depicts the geometric setup. First, letting $\delP = QR$ be the
  reduced QR decomposition of $\delP$, and writing
  $\nu_{\lambda^*} = p_{\lambda^*}/\|p_{\lambda^*}\|$, we note that since:
  \begin{equation}
    \nabla F_0(\lambda^*) = \delU + s^\theta h \delP^\top \nu_{\lambda^*} = 0,
  \end{equation}
  the optimum $\lambda^*$ satisfies:
  \begin{equation}\label{eq:Qt-n}
    - R^{-\top} \frac{\delU}{s^\theta h} = Q^\top \nu_{\lambda^*}
  \end{equation}
  Let $\proj_{\delP} = QQ^\top$ denote the orthogonal
  projector onto $\range(\delP)$, and
  $\proj^\perp_{\delP} = I - QQ^\top$ the projector onto its
  orthogonal complement. We can try to write $p_{\lambda^*}$ by
  splitting it into a component that lies in $\range(\delP)$ and
  one that lies in $\range(\delP)^\perp$. Letting $\pmin$ be the
  point in $p_0 + \range(\delP)$ with the smallest 2-norm, we
  write:
  \begin{equation}\label{eq:plam-decomp}
    p_{\lambda^*} = (p_{\lambda^*} - \pmin) + \pmin,
  \end{equation}
  where $p_{\lambda^*} - \pmin \in \range(\delP)$ and
  $\pmin \in \range(\delP)^\perp$. The vector $\pmin$ corresponds to
  $p_{\lambdamin}$ where $\lambdamin$ satisfies:
  \begin{equation}
    0 = \delP^\top (\delP \lambdamin + p_0) = R^\top R \lambdamin + R^\top Q^\top p_0,
  \end{equation}
  hence $\lambdamin = -R^{-1} Q^\top p_0$, giving us:
  \begin{equation}\label{eq:pmin}
    \pmin = p_0 + \delP \lambdamin = \proj^\perp_{\delP} p_0.
  \end{equation}
  This vector is easily obtained. For $p_{\lambda^*} - \pmin$, we note
  that $\proj_{\delP} \nu_{\lambda^*}$ is proportional to
  $p_{\lambda^*} - \pmin$, suggesting that we determine the ratio
  $\alpha$ satisfying
  $p_{\lambda^*} - \pmin = \alpha \proj_{\delP}
  \nu_{\lambda^*}$. In particular, from the similarity of the
  triangles
  $(\hat{p}, \nu_{\lambda^*}, \proj^\perp_{\delP}
  \nu_{\lambda^*})$ and $(\hat{p}, p_{\lambda^*}, \pmin)$ in figure
  \ref{fig:f0-exact}, we have, using eqs.\@ \ref{eq:Qt-n} and
  \ref{eq:pmin}:
  \begin{equation}\label{eq:alpha-solve}
    \alpha = \frac{\norm{\pmin}}{\norm{\proj^\perp_{\delP} \nu_{\lambda^*}}} = \sqrt{\frac{p_0^\top \proj^\perp_{\delP} p_0}{1 - \norm{Q^\top \nu_{\lambda^*}}^2}} = \sqrt{\frac{p_0^\top \proj^\perp_{\delP} p_0}{1 - \norm{R^{-\top} \frac{\delU}{s^\theta h}}^2}}.
  \end{equation}
  At the same time, since:
  \begin{equation}
    \nu_{\lambda^*}^\top \proj^\perp_{\delP} \nu_{\lambda^*} = \frac{{(\proj^\perp_{\delP} p_{\lambda^*})}^\top {(\proj^\perp_{\delP} p_{\lambda^*})}}{\|p_{\lambda_*}\|^2} = \frac{\pmin^\top \pmin}{\|p_{\lambda^*}\|^2} = \frac{p_0^\top \proj^\perp_{\delP} p_0}{\|p_{\lambda^*}\|^2}
  \end{equation}
  we can conclude that:
  \begin{equation}
    \|p_{\lambda^*}\| = \alpha = \sqrt{\frac{p_0^\top \proj^\perp_{\delP} p_0}{1 - \norm{R^{-\top} \frac{\delU}{s^\theta h}}^2}},
  \end{equation}
  giving us eq.\@ \ref{eq:l-star-expression}, proving the first
  part of theorem \ref{thm:f0-exact}.

  Next, combining eqs.\@ \ref{eq:Qt-n}, \ref{eq:plam-decomp},
  \ref{eq:pmin}, and \ref{eq:alpha-solve}, we get:
  \begin{equation}\label{eq:plam-final}
    p_{\lambda^*} = \proj^\perp_{\delP} p_0 - \sqrt{\frac{p_0^\top \proj^\perp_{\delP} p_0}{1 - \norm{R^{-\top} \frac{\delU}{s^\theta h}}^2}} Q R^{-\top} \frac{\delU}{s^\theta h}.
  \end{equation}
  This expression for $p_{\lambda^*}$ can be computed from our problem
  data and $\delP$. Now, note that
  $p_{\lambda^*} = p_0 + \delP \lambda^*$ implies:
  \begin{equation}\label{eq:lambda-1}
    \lambda^* = R^{-1} Q^\top (p_{\lambda^*} - p_0).
  \end{equation}
  Substituting eq.\@ \ref{eq:plam-final} into eq.\@
  \ref{eq:lambda-1}, we obtain eq.\@ \ref{eq:f0-exact-lambda} after
  making appropriate cancellations, establishing the second part of
  theorem \ref{thm:f0-exact}.

  To establish eq.\@ \ref{eq:f0-exact}, we note that by optimality
  of $\lambda^*$, our expression for $\nabla F_0$ (eq.\@
  \ref{eq:F0-grad} of proposition \ref{prop:F0-grad-and-Hess}) gives:
  \begin{equation}
    \delU = -s^\theta h \frac{\delP^\top p_{\lambda^*}}{\|p_{\lambda^*}\|}.
  \end{equation}
  This lets us write:
  \begin{equation}\label{eq:delta-U-dot-lambda-star}
    \delU^\top \lambda^* = -\frac{s^\theta h}{\|p_{\lambda^*}\|} p_{\lambda^*}^\top \delP^\top \lambda^* = \frac{s^\theta h}{\|p_{\lambda^*}\|} p_{\lambda^*}^\top {(p_0 - p_{\lambda^*})}.
  \end{equation}
  Combining eq.\@ \ref{eq:delta-U-dot-lambda-star} with our
  definition of $F_0$ yields:
  \begin{equation}
    \hat{U} = F_0(\lambda^*) = U_0 + \delU^\top \lambda^* + s^\theta h \|p_{\lambda^*}\| = U_0 + \frac{s^\theta h}{\|p_{\lambda^*}\|} p_{\lambda^*}^\top {(p_0 - p_{\lambda^*})} + \frac{s^\theta h}{\|p_{\lambda^*}\|} p_{\lambda^*}^\top p_{\lambda^*},
  \end{equation}
  which gives eq.\@ \ref{eq:f0-exact}, completing the final part of
  the proof.
\end{proof}

\section{Proofs for section\@ \ref{ssec:equivalence}}\label{sec:equivalence-proofs}

\begin{proof}[Proof of theorem \ref{thm:equivalence}]
  We assume that $U$ is a linear function in the update simplex;
  hence, $\nabla U$ is constant. By stacking and subtracting eq.\@
  \ref{eq:finite-differences} for different values of $i$, we obtain,
  for $i = 0, \hdots, n - 1$:
  \begin{equation}\label{eq:finite-diff-eq}
    \begin{bmatrix}
      \delP^\top \\
      p_i^\top
    \end{bmatrix} \nabla U = \begin{bmatrix}
      \delU \\
      U_0 - \hat{U}
    \end{bmatrix}.
  \end{equation}
  The inverse of the matrix in the left-hand side of eq.\@
  \ref{eq:finite-diff-eq} is:
  \begin{equation}
    \begin{bmatrix}
      \parens{I - \frac{\numin p_i^\top}{\numin^\top p_i}} Q R^{-\top}, &
      \frac{\numin}{\numin^\top p_i}
    \end{bmatrix},
  \end{equation}
  which can be checked. This gives us:
  \begin{equation}
    \nabla U = \parens{I - \frac{\numin p_i^\top}{\numin^\top p_i}} Q R^{-\top} \delU + \frac{U_i - \hat{U}}{\numin^\top p_i} \numin.
  \end{equation}
  Hence, $\|\nabla U\|^2$ is a quadratic equation in
  $\hat{U} - U_i$. Expanding $\|\nabla U\|^2$, a number of
  cancellations occur since $Q^\top \numin = 0$. We have:
  \begin{equation}
    \delU^\top R^{-1} Q^\top \hspace{-0.25em} \parens{I - \frac{\numin p_i^\top}{\numin^\top p_i}}^{\hspace{-0.25em}\top} \hspace{-0.4em} \parens{I - \frac{\numin p_i^\top}{\numin^\top p_i}} \hspace{-0.1em} Q R^{-\top} \delU = \norm{R^{-\top} \delU}^2 + \frac{\parens{p_i^\top Q R^{-\top} \delU}^2}{\norm{\pmin}^2},
  \end{equation}
  so that, written in standard form:
  \begin{equation}
    \begin{aligned}
      {(\hat{U} - U_i)}^2 + 2 p_i^\top Q R^{-\top} \delU {(\hat{U} - U_i)} \,&+\, \parens{p_i^\top Q R^{-\top} \delU}^2 + \\
       &\norm{\pmin}^2 \parens{\norm{R^{-\top} \delU}^2 - \parens{s^\theta h}^2} = 0.
    \end{aligned}
  \end{equation}
  Solving for $\hat{U} - U_i$ gives:
  \begin{equation}
    \hat{U} = U_i - p_i^\top Q R^{-\top} \delU + \norm{\pmin} \sqrt{\parens{s^\theta h} - \|R^{-\top} \delU\|^2},
  \end{equation}
  establishing eq.\@ \ref{eq:U-finite-diff}.

  Next, to show that $\hat{U}' = \hat{U}$, we compute:
  \begin{align*}
    \hat{U}'
    &= U_0 + \delU^\top \lambda^* + s^\theta h \|p_{\lambda^*}\| & \\
    &= U_0 - \parens{Q^\top p_0 + \|p_{\lambda^*}\| R^{-\top} \frac{\delU}{s^\theta h}}^\top R^{-\top} \delU + s^\theta h \|p_{\lambda^*}\| & \mbox{(eq.\@ \ref{eq:f0-exact-lambda})} \\
    &= U_0 - p_0^\top Q R^{-\top} \delU + s^\theta h \|p_{\lambda^*}\| \parens{1 - \norm{R^{-\top} \frac{\delU}{s^\theta h}}^2} \\
    &= U_0 - p_0^\top Q R^{-\top} \delU + \norm{\pmin} \sqrt{\parens{s^\theta h}^2 - \norm{R^\top \delU}^2} = \hat{U}. & \mbox{(eq.\@ \ref{eq:l-star-expression})}
  \end{align*}
  To establish eq.\@ \ref{eq:U-from-Ui-exact}, first note that
  $-R^{-\top} \delU = s^\theta h Q^\top \nu_{\lambda^*}$ by
  optimality. Substituting this into eq.\@ \ref{eq:U-finite-diff},
  we first obtain:
  \begin{equation}
    \hat{U} = U_i + \frac{s^\theta h}{\|p_{\lambda^*}\|} \parens{p_i^\top \proj_{\delP} p_{\lambda^*} + \norm{\pmin} \sqrt{p_{\lambda^*}^\top \proj^\perp_{\delP} p_{\lambda^*}}}.
  \end{equation}
  Now, using the notation for weighted norms and inner products, we have:
  \begin{equation}\label{eq:weighted-norms}
    p_i^\top \proj_{\delP} p_{\lambda^*} + \norm{\pmin} \sqrt{p_{\lambda^*}^\top \proj^\perp_{\delP} p_{\lambda^*}} = \langle p_i, p_{\lambda^*} \rangle_{\proj_{\delP}} + \norm{p_i}_{\proj^\perp_{\delP}} \norm{p_{\lambda^*}}_{\proj^\perp_{\delP}}.
  \end{equation}
  Since $\proj^\perp_{\delP}$ orthogonally projects onto
  $\range(\delP)^\perp$, and since the dimension of this subspace is
  1, $\proj^\perp_{\delP} p_i$ and
  $\proj^\perp_{\delP} p_{\lambda^*}$ are multiples of one
  another and their directions coincide (see figure
  \ref{fig:f0-exact}); furthermore, the angle between them is since
  our simplex is nondegenerate. So, by Cauchy-Schwarz:
  \begin{equation}\label{eq:cauchy-schwarz}
    \norm{p_i}_{\proj^\perp_{\delP}} \norm{p_{\lambda^*}}_{\proj^\perp_{\delP}} = \langle p_i, p_{\lambda^*} \rangle_{\proj^\perp_{\delP}}.
  \end{equation}
  Combining eq.\@ \ref{eq:cauchy-schwarz} with eq.\@
  \ref{eq:weighted-norms} and cancelling terms yields:
  \begin{equation}
    p_i^\top \proj_{\delP} p_{\lambda^*} + \norm{\pmin} \sqrt{p_{\lambda^*}^\top \proj_{\delP} p_{\lambda^*}} = p_i^\top p_{\lambda^*}.
  \end{equation}
  Eq.\@ \ref{eq:U-from-Ui-exact} follows.

  To parametrize the characteristic found by solving the finite
  difference problem, first note that the characteristic arriving at
  $\hat{p}$ is colinear with $\nabla \hat{U}$. If we let $\tilde{\nu}$
  be the normal pointing from $\hat{p}$ in the direction of the
  arriving characteristic, let $\tilde{p}$ be the point of
  intersection between
  $p_0 + \range(\delP)$ and
  $\operatorname{span}\hspace{-0.15em}{(\tilde{\nu})}$, and let
  $\tilde{l} = \norm{\tilde{p}}$, then, since
  $\tilde{p} - p_0 \in \range(\delP)$:
  \begin{equation}
    \numin^\top (\tilde{p} - p_0) = 0.
  \end{equation}
  Rearranging this and substituting
  $\tilde{p} = \tilde{l} \tilde{\nu}$, we get:
  \begin{equation}
    \tilde{l} = \frac{\numin^\top p_0}{\numin^\top \tilde{\nu}}.
  \end{equation}
  Now, if we assume that we can write
  $\tilde{p} = \delP \tilde{\lambda} + p_0$ for some
  $\tilde{\lambda}$, then:
  \begin{equation}
    \tilde{\lambda} = R^{-1} Q^\top \parens{\tilde{p} - p_0} = -R^{-1} Q^\top \parens{I - \frac{\tilde{\nu} \numin^\top}{\tilde{\nu}^\top \numin}} p_0.
  \end{equation}

  To see that $\tilde{p} = p_{\lambda^*}$, note that since
  $\tilde{\nu} = -\nabla \hat{U}/\norm{\nabla \hat{U}} = -\nabla
  \hat{U}/(s^\theta h)$:
  \begin{equation}
    \proj_{\delP} \tilde{\nu} = \frac{-\proj_{\delP} \nabla \hat{U}}{s^\theta h} = \frac{-QR^{-\top} \delU}{s^\theta h} = \proj_{\delP} \nu_{\lambda^*}.
  \end{equation}
  Since $\tilde{\nu}$ and $\nu_{\lambda^*}$ each lie in the unit
  sphere on the same side of the hyperplane spanned by $\delP$, and
  since $\proj_{\delP}$ orthogonally projects onto
  $\range(\delP)$, we can see that in fact
  $\tilde{\nu} = \nu_{\lambda^*}$. Hence,
  $\tilde{p} = p_{\lambda^*} \in p_0 + \range(\delP)$. The second and
  third parts of theorem \ref{thm:equivalence} follow.
\end{proof}

\section{Proofs for section\@
  \ref{ssec:causality}}\label{sec:causality-proofs}

\begin{proof}[Proof of theorem \ref{thm:causality}]
  For causality of $F_0$, we want $\hat{U} \geq \max_i U_i$, which is
  equivalent to $\min_i(\hat{U} - U_i) \geq 0$. From eq.\@
  \ref{eq:U-finite-diff}, we have:
  \begin{equation}
    \min_i \parens{\hat{U} - U_i} = s^\theta h \min_i \min_{\lambda \in \Delta^n} \frac{\nu_i^\top \nu_{\lambda}}{\norm{p_\lambda}} = s^\theta h \min_{i, j} \frac{\nu_i^\top \nu_j}{\norm{p_i}} \geq 0.
  \end{equation}
  The last equality follows because minimizing the cosine between two
  unit vectors is equivalent to maximizing the angle between them;
  since $\lambda$ is restricted to lie in $\Delta^n$, this clearly
  happens at a vertex since the minimization problem is a linear
  program.

  For $F_1$, first rewrite $s_\lambda^\theta$ as follows:
  \begin{equation}
    s_\lambda^\theta = s^\theta + \theta (s_0 + \dels^\top \lambda - \overline{s}),
  \end{equation}
  where $\overline{s} = n^{-1} \sum_{i=0}^{n-1} s_i$. If
  $\lambda_0^\star$ and $\lambda_1^\star$ are the minimizing arguments
  for $F_0$ and $F_1$, respectively, and if
  $\dellam^* = \lambda_1^* - \lambda_0^*$, then we have:
  \begin{equation}\label{eq:F1-in-terms-of-F0}
    F_1(\lambda_1^*) = F_0(\lambda_1^*) + \theta \parens{s_0 + \dels^\top \lambda_1^* - \overline{s}} h \|p_{\lambda_1^\star}\|.
  \end{equation}
  By the optimality of $\lambda_0^*$ and strict convexity of
  $F_0$ (lemma \ref{lemma:F-strictly-convex}), we can Taylor
  expand and write:
  \begin{equation}
    F_0(\lambda_1^*) = F_0(\lambda_0^*) + \nabla F_0(\lambda_0^*)^\top \dellam^* + \frac{1}{2} {\dellam^*}^\top \nabla^2 F_0(\lambda_0^*) \dellam^* + R \geq R,
  \end{equation}
  where $\abs{R} = O(h^3)$ by theorem \ref{thm:mp0-newton}. Let
  $\hat{U} = F_1(\lambda_1^*)$. Since $F_0$ is causal,
  we can write:
  \begin{equation}
    \hat{U} \geq \max_i U_i + R + \theta \parens{s_0 + \dels^\top \lambda_1^* - \overline{s}} h \|p_{\lambda_1^*}\|.
  \end{equation}
  Since $s$ is Lipschitz, the last term is $O(h^2)$---in particular,
  $\norm{\dels} = O(h)$ and $\norm{s_0 - \overline{s}} = O(h)$ since
  $s_0$ and $\overline{s}$ lie in the same simplex. So, because the
  gap $\min_i(\hat{U} - U_i)$ is $O(h)$, we can see that
  $\hat{U} \geq \max_i U_i$ for $h$ sufficiently small.
\end{proof}

\bibliographystyle{plain}
\bibliography{eikonal}{}

\end{document}